\author{Amir Akbar Tabatabai}
\theoremstyle{plain} 
\newtheorem{thm}{Theorem}[section]
\newtheorem{lem}[thm]{Lemma}
\theoremstyle{definition}
\newtheorem{dfn}[thm]{Definition}
\newtheorem{exam}[thm]{Example}
\newtheorem{rem}[thm]{Remark}
\def\PA{\mathrm{PA}}
\def\Pr{\mathrm{Pr}}
\def\Prf{\mathrm{Prf}}
\def\S4{\mathrm{S4}}
\def\Cons{\mathrm{Cons}}
\def\Rfn{\mathrm{Rfn}}
\begin{document}
\title{Provability Logics of Hierarchies} 

\author{Amirhossein Akbar Tabatabai \footnote{The author is supported by the ERC Advanced Grant 339691 (FEALORA)}\\
Institute of Mathematics\\
Academy of Sciences of the Czech Republic\\
tabatabai@math.cas.cz}

\date{February 9, 2017 }

\maketitle

\begin{abstract}
The branch of provability logic investigates the provability-based behavior of the mathematical theories. In a more precise way, it studies the relation between a mathematical theory $T$ and a modal logic $L$ via the provability interpretation which interprets the modality as the provability predicate of $T$. In this paper we will extend this relation to investigate the provability-based behavior of a \textit{hierarchy} of theories. More precisely, using the modal language with infinitely many modalities, $\{\Box_n\}_{n=0}^{\infty}$, we will define the hierarchical counterparts of some of the classical modal theories such as $\mathbf{K4}$, $\mathbf{KD4}$, $\mathbf{GL}$ and $\mathbf{S4}$. Then we will define their canonical provability interpretations and their corresponding soundness-completeness theorems.
\end{abstract}

\section{Introduction}
Provability logic is a branch of mathematical logic which investigates the provability-based behavior of the mathematical theories. In a more precise way, it studies the relation between a theory $T$ and a modal logic $L$ via the provability interpretation which interprets $\Box$ in the language of $L$ as the provability predicate for the theory $T$. The key example of this relation is the relation between the theory $\mathbf{PA}$ and the modal logic $\mathbf{GL}$ presented by R. Solovay in \cite{So}. Inspired by this seminal work, these kinds of relations have been fully investigated in terms of different aspects. But in spite of the extensive work, it seems that there are still some problems unsolved. The main theme is the following: There are some modal theories such as $\mathbf{K4}$, $\mathbf{KD4}$ or $\mathbf{S4}$ which admit some kinds of informal provability interpretations. However, none of these logics is a provability logic for any theory in the usual formal sense. The problem is, how it is possible to formalize those intuitive provability interpretations to widen the horizon and see these modal theories as provability logics. \\
Let us illuminate this problem by a classical example. The best example is G\"{o}del's problem of finding a provability interpretation for $\mathbf{S4}$, proposed in his paper \cite{G}. Think of the axioms of the system $\mathbf{S4}$. It seems that all of them are valid under the intuitive interpretation of $\Box$ as the informal provability predicate. The axiom $\mathbf{K}: \Box(A \rightarrow B) \rightarrow (\Box A \rightarrow \Box B)$ means that the provability predicate is closed under modus ponens. The axiom $\mathbf{4}: \Box A \rightarrow \Box \Box A$ states that ``the provability of a provable statement is also provable" which seems a reasonable condition to have and finally $\mathbf{T}: \Box A \rightarrow A$ states that the proofs are sound. Therefore, it seems that $\mathbf{S4}$ is a valid theory for the concept of provability. However, $\mathbf{S4}$ is not a provability logic in the usual sense, because if it is a provability logic of a theory $T$, then $\neg \Box \bot \wedge \Box \neg \Box \bot$ should be true under the provability interpretation. This means that the statement $\Cons(T) \wedge \Pr_T(\Cons(T))$ holds, which contradicts with G\"{o}del's second incompleteness theorem. Therefore, the following question emerges: If the usual provability interpretation does not work for $\mathbf{S4}$, then what is the formalization of the intuitive interpretation we used before?\\

To solve these kinds of problems, in \cite{AK} we introduced a way of extending the framework of provability logic to capture more theories, including $\mathbf{S4}$. The main idea is using a hierarchy of theories instead of just one theory. The explanation is the following: In the modal language there are nested modalieties which intuitively capture the nested use of the provability predicate in mathematics; statements like provability of $p$, provability of ``provability of $p$" and so on. These different layers of provability naturally refer to different layers of theories, meta-theories, meta-meta-theories and so on. But the usual provability interpretation reads all of them as the provability predicate of a fixed theory. Philosophically speaking, we know that there is no reason to assume that all layers of our meta-theories are the same. Quite the contrary, in actual practice of mathematical logic, sometimes we need to have more powerful meta-theories to investigate the behavior of the theory. Therefore, we proposed using a hierarchy of theories to formalize the different layers of meta-theories instead of using just one theory for all the levels of the concept of provability. Following that approach and using some natural classes of the hierarchies of arithmetical theories, we found some natural interpretations for some modal logics such as $\mathbf{K4}$, $\mathbf{KD4}$ and $\mathbf{S4}$.\\

This framework extension is clearly useful for the problem of finding a provability interpretation for a given modal logic, but it also proposes a brand new problem, the converse of the first problem which is the problem of finding the provability logic of a given class of hierarchies. Trivially, we can interpret our work \cite{AK} as a way to answer this question, but there are some technical problems which make the usual language of modal logics quite inappropriate for this purpose. The reason is that in the language of modal logic we have just one modality and we know that there is no canonical way to interpret the different occurrences of this modality as the different provability predicates in the hierarchy. Hence, it seems that the usual language of modal logics is not a natural choice if we want to capture the provability-based behavior of a hierarchy. To handle this problem, it seems that we need a modal language with infinitely many modalities to capture the different layers of the meta-theories' hierarchy. \\

In this paper we will follow this poly-modal approach. In fact, using the language mentioned above we will introduce the hierarchical counterparts of some of the usual modal logics, such as $\mathbf{K4}$, $\mathbf{KD4}$ and $\mathbf{S4}$. Then we will introduce a natural provability interpretation for these new logics and finally, we will prove the soundness-completeness theorems for this interpretation.
\section{Preliminaries}
Our main strategy to prove the soundness-completeness result is reducing the completeness of the new theories to the completeness of the usual modal theories proved in \cite{AK}. To follow this strategy, we need some of the notions and theorems of \cite{AK}. In this section we will explain them.\\

The first key ingredient is the notion of a provability model as a natural model to capture our intuitive notion of the world to evaluate statements and the notion of the hierarchy of the different layers of meta-theories. (For more detailed explanation, see \cite{AK}.)
\begin{dfn}\label{t1-1} 
A provability model is a pair $(M, \{T_n\}_{n=0}^{\infty})$ where $M$ is a model of $I\Sigma_1$ and $\{T_n\}_{n=0}^{\infty}$ is a hierarchy of arithmetical r.e. theories such that for any $n$, $I\Sigma_1 \subseteq T_n \subseteq T_{n+1}$ provably in $I\Sigma_1$.
\end{dfn}
The next ingredient is the notion of witness. Informally speaking, it is just a way of assigning numbers to boxes in a formula. The goal is assigning theories in the hierarchy to boxes. The condition is that the number for the outer box should be greater than the number for inner boxes. This condition captures the fact that the outer box refers to the meta-theory of the theories used for the inner boxes.
\begin{dfn}\label{t1-2} 
Let $w$ be a sequence of natural numbers and $A$ be a modal formula. Then the relation $w \Vdash A$, which means $w$ is a witness for $A$, is inductively defined as follows:
\begin{itemize}
\item[$\bullet$]
If $A$ is an atom, $() \Vdash A$.
\item[$\bullet$]
If $A=B \circ C$, then $(w_1, w_2) \Vdash A$ if $w_1 \Vdash B$ and $w_2 \Vdash C$ for $\circ \in \{\wedge, \vee, \rightarrow\}$
\item[$\bullet$]
If $A=\neg B$, then $w \Vdash A$ if $w \Vdash B$.
\item[$\bullet$]
If $A=\square B $, then $(n, w) \Vdash A$ if $w \Vdash B$ and $n > m$ for all $m$ which are appeared in $w$.
\end{itemize}
Moreover, if $\Gamma$ is a sequence of modal formulas, by a witness for $\Gamma$ we mean, a sequence of witnesses, such that any witness $w_i$ in the sequence, is a witness for $A_i$ in $\Gamma$.
\end{dfn}
The next concept that we need is the notion of evaluation.
\begin{dfn}\label{t1-3} 
Let $w$ be a witness for $A$ and $\sigma$ be an arithmetical substitution which assigns an arithmetical sentence to any propositional variable. And also let $(M, \{T_n\}_{n=0}^{\infty})$ be a provability model. By $A^{\sigma}(w)$ we mean an arithmetical sentence which is resulted by substituting the variables by $\sigma$ and interpreting any box as the provability predicate of $T_n$ if the corresponding number in the witness for this box was $n$. The interpretation of the boolean connectives are themselves. Moreover, if $\Gamma$ is a sequence of modal formulas $A_i$, and $w=(w_i)_{i}$ is its witness, by $\Gamma^{\sigma}(w)$ we mean the sequence of $A_i^{\sigma}(w_i)$.
\end{dfn}
And the notion of satisfaction:
\begin{dfn}\label{t1-4} 
A sequent $\Gamma \Rightarrow \Delta$ is true in $(M, \{T_n\}_{n=0}^{\infty})$ when there are witnesses $u$ and $v$ for $\Gamma$ and $\Delta$ respectively, such that for any arithmetical substitution $\sigma$, $M \vDash \Gamma^{\sigma}(u) \Rightarrow \Delta^{\sigma}(v)$. Moreover, we say a sequent $\Gamma \Rightarrow \Delta$ is true in a class of models $\mathcal{C}$, when there are uniform witnesses for all models. In a more precise way, we write $\mathcal{C} \vDash \Gamma \Rightarrow \Delta$, if there are witnesses $u$ and $v$ such that for all arithmetical substitutions $\sigma$ and all provability models $(M, \{T_n\}_{n=0}^{\infty})$ in $\mathcal{C}$, $M \vDash \Gamma^{\sigma}(u) \Rightarrow \Delta^{\sigma}(v)$.
\end{dfn}
\begin{rem}\label{t1-5}
The Definition \ref{t1-4} is actually a weaker version of what we used in \cite{AK}. The full definition is more complicated, but since we just need the completeness part of these interpretations, this weaker version would be enough for our purpose.
\end{rem}
To have a completeness result we need some classes of provability models. In the following, we will define some of the natural ones:
\begin{dfn}\label{t1-6}
\begin{itemize}
\item[$(i)$]
The class of all provability models will be denoted by $\mathbf{PrM}$.
\item[$(ii)$]
A provability model $(M,\{T_n\}_{n=0}^{\infty})$ is called consistent if for any $n$, $M$ thinks that $T_n$ is consistent and $T_{n+1} \vdash \Cons(T_n)$, i.e. $M \vDash \Cons(T_n)$ and $M \vDash \Pr_{T_{n+1}}(\Cons(T_n))$. The class of all consistent provability models will be denoted by $\mathbf{Cons}$.
\item[$(iii)$]
A provability model $(M,\{T_n\}_{n=0}^{\infty})$ is reflexive if for any $n$, $M$ thinks that $T_n$ is sound and $T_{n+1} \vdash \Rfn(T_n)$, i.e. $M \vDash \Pr_{T_n}(A) \rightarrow A$ and $M \vDash \Pr_{T_{n+1}}(\Pr_{T_n}(A) \rightarrow A)$ for any sentence $A$. The class of all reflexive provability models will be denoted by $\mathbf{Ref}$.
\item[$(iii')$]
A hierarchy $\{T_n\}_{n=0}^{\infty}$ of theories is called uniform if there exists a $\Sigma_1$ formula $\Prf(x, y, z)$ such that for any $n$, $m$ and $A$, $\Prf(n, m, \lceil A \rceil)$ iff $m$ is a code of a proof for $A$ in $T_n$. The hierarchy is called uniformly increasing if it is a uniform hierarchy and also we have $I\Sigma_1 \subseteq T_0$ provably in $I\Sigma_1$ and $I\Sigma_1 \vdash \forall x \forall z (\exists y \; \Prf(x, y, z) \rightarrow \exists w \; \Prf(x+1, w, z))$. And finally a hierarchy is called uniformly reflexive if it is a uniformly increasing hierarchy such that for any formula $A$, $I\Sigma_1 \vdash \forall x  \exists y \; \Prf(x+1, y, \exists w \; \Prf(x, w, A) \rightarrow A)$. If $\{T_n\}_{n=0}^{\infty}$ is uniformly reflexive and $M \vDash \bigcup_{n=0}^{\infty}T_n$, the provability model $(M,\{T_n\}_{n=0}^{\infty})$ is called uniformly reflexive. The class of all uniformly reflexive models will be denoted by $\mathbf{uRef}$.
\item[$(iv)$]
A provability model, $(M, \{T_n\}_{n=0}^{\infty})$ is constant if for any $n$ and $m$, $(M, \{T_n\}_{n=0}^{\infty})$ thinks that $T_n=T_m$, i.e. $M \vDash \Pr_{T_m}(A) \leftrightarrow \Pr_{T_n}(A)$ and $M \vDash \Pr_{T_0}(\Pr_{T_m}(A) \leftrightarrow \Pr_{T_n}(A))$ for any sentence $A$. The class of all constant provability models will be denoted by $\mathbf{Cst}$.
\end{itemize}
\end{dfn}
And finally we have the following completeness theorems for the usual modal logics:
\begin{thm}\label{t1-7}
\begin{itemize}
\item[$(i)$]
If $\mathbf{PrM} \vDash \Gamma \Rightarrow A$, then $\Gamma \vdash_{\mathbf{K4}} A$.
\item[$(ii)$]
If $\mathbf{Cons} \vDash \Gamma \Rightarrow A$, then $\Gamma \vdash_{\mathbf{KD4}} A$.
\item[$(iii)$]
Let $ \{T_n\}_{n=0}^{\infty}$ be a uniformly reflexive hierarchy of sound theories. Then there exists an arithmetical substitution $*$, such that for any modal sequent $\Gamma \Rightarrow A$, if there exist witnesses $u$ and $v$ such that for all $M \vDash \bigcup_nT_n$, $(M, \{T_n\}_{n=0}^{\infty}) \vDash \Gamma^{*}(u) \Rightarrow A^{*}(v)$, then $ \Gamma \vdash_{\mathbf{S4}} A$. Moreover, If $\mathbf{Ref} \vDash \Gamma \Rightarrow A$, then $\Gamma \vdash_{\mathbf{S4}} A$.
\item[$(vi)$]
Let $I \Sigma_1 \subseteq T$ be an r.e. $\Sigma_1$-sound theory and $\{T_n\}_{n=0}^{\infty}$ be a hierarchy of theories such that for any $n$, $T_n=T$, then there is an arithmetical substitution $*$ such that for any modal sequent $\Gamma \Rightarrow A$, if for all $M \vDash T$, we have $(M, \{T_n\}_{n=0}^{\infty}) \vDash \Gamma \Rightarrow A$, then $\Gamma \vdash_\mathbf{GL} A$. And especially, if $\mathbf{Cst} \vDash \Gamma \Rightarrow A$ then $\Gamma \vdash_\mathbf{GL} A$.
\end{itemize}
\end{thm}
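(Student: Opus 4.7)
The plan is to establish each completeness result by contraposition: given a modal sequent $\Gamma \Rightarrow A$ that is \emph{not} derivable in the target logic, I would exhibit a provability model in the specified class (and, when needed, uniform witnesses and an arithmetical substitution) that refutes the sequent. The starting point in every case is Kripke completeness of the logic --- $\mathbf{K4}$ for (i) is complete with respect to finite transitive frames, $\mathbf{KD4}$ for (ii) adds seriality, $\mathbf{S4}$ for (iii) uses reflexive transitive frames, and $\mathbf{GL}$ for (iv) uses finite transitive irreflexive (converse well-founded) frames. From a finite Kripke counter-model the goal is to produce, via a Solovay-style arithmetical realization, the required failure in an arithmetical provability model. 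Soundness in each part is routine: one checks that every axiom and rule of the modal system is validated by the provability interpretation in the specified class, using the witness discipline of Definition~\ref{t1-2} to match levels across nested modalities.

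For parts (i) and (ii) I would adapt Solovay's construction directly. Let $\lambda_0,\dots,\lambda_k$ be sentences indexing the worlds of a finite transitive (serial, for (ii)) counter-model via a primitive recursive fixed point, and set $\sigma(p)=\bigvee\{\lambda_i:i\Vdash p\}$ in the Kripke sense. Because the witnessing of a box in $\Box B$ forces a level strictly above every level used inside $B$, it suffices to choose, for each occurrence of a box, a natural number equal to its modal nesting depth, and then to read the box at depth $d$ as $\Pr_{T_d}$ for a suitable hierarchy $\{T_n\}$. The usual Solovay argument then produces a member of $\mathbf{PrM}$ (and, after a simple consistency check, of $\mathbf{Cons}$) refuting the sequent. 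Part (iv) is the cleanest, because the constancy clause $M\vDash\Pr_{T_m}(A)\leftrightarrow\Pr_{T_n}(A)$ collapses every $\Pr_{T_n}$ into a single predicate $\Pr_T$; the statement then reduces verbatim to Solovay's arithmetical completeness theorem for $\mathbf{GL}$ over an r.e.\ $\Sigma_1$-sound extension of $I\Sigma_1$, which I would invoke.

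Part (iii), the $\mathbf{S4}$ case, I expect to be the main obstacle, for two reasons. First, the completeness must be witnessed by reflection: every axiom $\Box_n A\to A$ has to be provable in the appropriate level of the hierarchy. This is exactly what the uniformly reflexive assumption delivers --- $I\Sigma_1\vdash \forall x\,\exists y\,\Prf(x{+}1,y,\exists w\,\Prf(x,w,A)\to A)$ ensures that $T_{n+1}$ uniformly proves the reflection principle for $T_n$, so the strict inner-over-outer level discipline of Definition~\ref{t1-2} mirrors the ``meta-theory proves reflection of theory'' pattern needed to validate $\mathbf{S4}$. Second, the conclusion demands \emph{uniform} witnesses $u,v$ that work for every $M\vDash\bigcup_n T_n$ and a single substitution $*$, so the Solovay-style sentences and their enumeration must be defined externally in $I\Sigma_1$. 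Given a finite reflexive transitive Kripke counter-model, I would perform the usual Solovay climb but replace the global consistency statements by the level-indexed reflection sentences supplied by uniform reflexivity, choosing witnesses strictly increasing with modal depth; the delicate step is verifying that the construction genuinely fails in every $M\vDash\bigcup_n T_n$ rather than only in the standard model. The second clause, $\mathbf{Ref}\vDash\Gamma\Rightarrow A\Rightarrow \Gamma\vdash_{\mathbf{S4}}A$, then follows by noting that the arithmetical model produced for a uniformly reflexive hierarchy of sound theories already lies in $\mathbf{Ref}$, so any refutation over $\mathbf{Ref}$ is in particular a refutation of the form just constructed.
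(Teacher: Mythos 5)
You should first note a structural mismatch: the paper does not prove Theorem~\ref{t1-7} at all. It is stated in the Preliminaries and explicitly imported from \cite{AK}; the present paper only uses it as a black box to which the completeness of the hierarchical logics $\mathbf{K4}_h$, $\mathbf{KD4}_h$, $\mathbf{S4}_h$, $\mathbf{GL}_h$ is later reduced. So there is no in-paper proof to compare against, and what follows is an assessment of your sketch on its own terms.

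The sketch contains one gap that is fatal as written. For parts $(i)$ and $(ii)$ you propose to ``adapt Solovay's construction directly'' from a finite transitive (serial) Kripke counter-model. But Solovay's construction over a single r.e.\ $\Sigma_1$-sound theory containing $I\Sigma_1$ inevitably validates L\"{o}b's axiom: the provability logic of any such theory is exactly $\mathbf{GL}$, so no arithmetical realization reading every box as one fixed $\Pr_T$ can refute $\Box(\Box p\rightarrow p)\rightarrow\Box p$, which $\mathbf{K4}$ does not prove, nor $\neg\Box\bot\wedge\Box\neg\Box\bot$, which $\mathbf{KD4}$ is consistent with. The entire content of parts $(i)$--$(iii)$ is that the freedom of a \emph{non-constant} hierarchy $\{T_n\}$ --- reading nested boxes as provability in genuinely different, increasingly strong theories, as the witness discipline of Definition~\ref{t1-2} permits --- is what defeats the extra $\mathbf{GL}$-validities. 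Your sketch never says how the hierarchy is to be chosen to accomplish this (for instance, how a reflexive world of a $\mathbf{K4}$- or $\mathbf{S4}$-counter-model, which is unrealizable over a single theory, is realized by letting each level prove reflection or consistency for the level below). That choice is precisely the non-routine step, and ``the usual Solovay argument then produces a member of $\mathbf{PrM}$'' does not survive it.

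A second, quantifier-level gap: by Definition~\ref{t1-4}, $\mathcal{C}\vDash\Gamma\Rightarrow A$ asserts the \emph{existence} of witnesses $u,v$ working uniformly over all models in $\mathcal{C}$ and all substitutions. The contrapositive you must prove is therefore: if $\Gamma\nvdash_L A$, then for \emph{every} choice of witnesses there exist a model in $\mathcal{C}$ and a substitution refuting $\Gamma^{\sigma}(u)\Rightarrow A^{\sigma}(v)$. Fixing a single canonical witness (``a natural number equal to its modal nesting depth'') and refuting that one instance only rules out one candidate pair $(u,v)$; it does not establish $\mathcal{C}\nvDash\Gamma\Rightarrow A$. (Incidentally, if ``nesting depth'' means depth of the occurrence from the root, it assigns inner boxes the larger numbers, which violates the witness condition of Definition~\ref{t1-2}; you want the modal height of the subformula under the box.) Your reading of part $(iv)$ --- collapsing a constant hierarchy to a single $\Pr_T$ and invoking Solovay --- is essentially right, and your identification of uniform reflexivity as the engine for part $(iii)$ is on target, but for $(i)$--$(iii)$ the proposal as it stands does not close either of the two gaps above.
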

As the last word in this section, let us remind you the three sequent calculi for the modal logics $\mathbf{K4}$, $\mathbf{KD4}$ and $\mathbf{S4}$. We will need them in the next section. Consider the following rules:
\begin{flushleft}
	\textbf{Axioms:}
\end{flushleft}
\begin{center}
	\begin{tabular}{c c}
		\AxiomC{$  A \Rightarrow A$ }
		\DisplayProof 
			&
		\AxiomC{$ \bot \Rightarrow  $}
		\DisplayProof
	\end{tabular}

\end{center}
\begin{flushleft}
 		\textbf{Structural Rules:}
\end{flushleft}
\begin{center}
	\begin{tabular}{c}
		\begin{tabular}{c c}
		\AxiomC{$\Gamma  \Rightarrow \Delta$}
		\LeftLabel{\tiny{$ (wL) $}}
		\UnaryInfC{$\Gamma,  A  \Rightarrow \Delta$}
		\DisplayProof
			&
		\AxiomC{$\Gamma  \Rightarrow \Delta$}
		\LeftLabel{\tiny{$ ( wR) $}}
		\UnaryInfC{$\Gamma \Rightarrow  \Delta, A$}
		\DisplayProof
		\end{tabular}
			\\[3 ex]
			\begin{tabular}{c c}
		\AxiomC{$\Gamma, A, A \Rightarrow \Delta$}
		\LeftLabel{\tiny{$ (cL) $}}
		\UnaryInfC{$\Gamma,  A  \Rightarrow \Delta$}
		\DisplayProof
		    &
		\AxiomC{$\Gamma \Rightarrow \Delta, A, A$}
		\LeftLabel{\tiny{$ (cR) $}}
		\UnaryInfC{$\Gamma \Rightarrow \Delta, A$}
		\DisplayProof
		\end{tabular}
        \\[3 ex]
	    
	    \AxiomC{$\Gamma_0 \Rightarrow \Delta_0, A$}
	    \AxiomC{$\Gamma_1, A \Rightarrow \Delta_1$}
		\LeftLabel{\tiny{$ (cut) $}}
		\BinaryInfC{$\Gamma_0, \Gamma_1 \Rightarrow \Delta_0, \Delta_1$}
		\DisplayProof
	\end{tabular}
\end{center}		
\begin{flushleft}
  		\textbf{Propositional Rules:}
\end{flushleft}
\begin{center}
  	\begin{tabular}{c c}
  		\AxiomC{$\Gamma_0, A  \Rightarrow \Delta_0 $}
  		\AxiomC{$\Gamma_1, B  \Rightarrow \Delta_1$}
  		\LeftLabel{{\tiny $\vee L$}} 
  		\BinaryInfC{$ \Gamma_0, \Gamma_1, A \lor B \Rightarrow \Delta_0, \Delta_1 $}
  		\DisplayProof
	  		&
	   	\AxiomC{$ \Gamma \Rightarrow \Delta, A_i$}
   		\RightLabel{{\tiny $ (i=0, 1) $}}
   		\LeftLabel{{\tiny $\vee R$}} 
   		\UnaryInfC{$ \Gamma \Rightarrow \Delta, A_0 \lor A_1$}
   		\DisplayProof
	   		\\[3 ex]
   		\AxiomC{$ \Gamma, A_i \Rightarrow \Delta$}
   		\RightLabel{{\tiny $ (i=0, 1) $}} 
   		\LeftLabel{{\tiny $\wedge L$}}  		
   		\UnaryInfC{$ \Gamma, A_0 \land A_1 \Rightarrow \Delta, C $}
   		\DisplayProof
	   		&
   		\AxiomC{$\Gamma_0  \Rightarrow \Delta_0, A$}
   		\AxiomC{$\Gamma_1  \Rightarrow  \Delta_1, B$}
   		\LeftLabel{{\tiny $\wedge R$}} 
   		\BinaryInfC{$ \Gamma_0, \Gamma_1 \Rightarrow \Delta_0, \Delta_1, A \land B $}
   		\DisplayProof
   			\\[3 ex]
   		\AxiomC{$ \Gamma_0 \Rightarrow A, \Delta_0 $}
  		\AxiomC{$ \Gamma_1, B \Rightarrow \Delta_1, C $}
  		\LeftLabel{{\tiny $\rightarrow L$}} 
   		\BinaryInfC{$ \Gamma_0, \Gamma_1, A \rightarrow B \Rightarrow \Delta_0, \Delta_1, C$}
   		\DisplayProof
   			&
   		\AxiomC{$ \Gamma, A \Rightarrow B, \Delta $}
   		\LeftLabel{{\tiny $\rightarrow R$}} 
   		\UnaryInfC{$ \Gamma \Rightarrow \Delta, A \rightarrow B$}
   		\DisplayProof
   		\\[3 ex]
   		\AxiomC{$ \Gamma \Rightarrow \Delta, A $}
   		\LeftLabel{\tiny {$\neg L$}} 
   		\UnaryInfC{$ \Gamma, \neg A \Rightarrow \Delta$}
   		\DisplayProof
   			&
   		\AxiomC{$ \Gamma, A \Rightarrow \Delta $}
   		\LeftLabel{{\tiny $\neg R$}} 
   		\UnaryInfC{$ \Gamma \Rightarrow \Delta, \neg A$}
   		\DisplayProof
	\end{tabular}
\end{center}

\begin{flushleft}
 		\textbf{Modal Rules:}
\end{flushleft}
\begin{center}
  	\begin{tabular}{c c c}
		\AxiomC{$ \Gamma, \Box \Gamma \Rightarrow A$}
		\LeftLabel{\tiny{$\Box_4 R$}}
		\UnaryInfC{$\Box \Gamma \Rightarrow \Box A$}
		\DisplayProof
		&
		\AxiomC{$ \Gamma, \Box \Gamma \Rightarrow $}
		\LeftLabel{\tiny{$\Box_D R$}}
		\UnaryInfC{$\Box \Gamma \Rightarrow $}
		\DisplayProof
		&
		\AxiomC{$ \Box \Gamma \Rightarrow A$}
		\LeftLabel{\tiny{$\Box_S R$}}
		\UnaryInfC{$\Box \Gamma \Rightarrow \Box A$}
		\DisplayProof
		\end{tabular}
\end{center}
\begin{center}
  	\begin{tabular}{c c}	
        \AxiomC{$ \Gamma, A \Rightarrow \Delta$}
		\LeftLabel{\tiny{$\Box L$}}
		\UnaryInfC{$\Gamma, \Box A \Rightarrow \Delta$}
		\DisplayProof
		\\[2 ex]
	\end{tabular}
\end{center}
The system $G(\mathbf{K4})$ is the system that consists of the axioms, structural rules and propositional rules and the modal rule $\Box_{4} R$. $G(\mathbf{KD4})$ is $G(\mathbf{K4})$ plus the rule $\Box_{D} R$ and finally, $G(\mathbf{S4})$ is the system $G(\mathbf{K4})$ when we replace the rule $\Box_{4} R$ by $\Box_{S} R$ and add the rule $\Box L$. All of these systems have the cut elimination property. (See \cite{Po}).
\section{Hierarchical Modal Theories and Their Proof Theory}
In this section we will define an appropriate language to reflect a hierarchy of theories in the language of modal logics. Then we will introduce some natural modal theories to formalize different provability-based behaviors of hierarchies and finally we will investigate some of their proof-theoretic properties.
\begin{dfn}\label{t2-1}
Consider the language of modal logics with infinitely many modalities, $\{\Box_n\}_{n=0}^{\infty}$. The set of formulas in this language, $\mathcal{L}_{\infty}$, is defined as the least set of expressions which includes all atomic formulas and is closed under all boolean operations and also the following operation: If $A \in \mathcal{L}_{\infty}$ and $n$ is bigger than all indices of boxes occurred in $A$ then $\Box_n A \in \mathcal{L}_{\infty}$. In other words, $A \in \mathcal{L}_{\infty}$ if $A$ is a usual formula in the modal language and also the index of any box is bigger than the indices of all other boxes in its scope. Moreover, if $A \in \mathcal{L}_{\infty}$, by the rank of $A$, $r(A)$, we mean the biggest index occurring in $A$ and if $\Gamma \subseteq \mathcal{L}_{\infty}$, by $r(\Gamma)$ we mean the maximum of the ranks of $A \in \Gamma$.
\end{dfn}
For example $\Box _1 (\neg \Box_0 p \wedge q)$ is a formula with rank one, while the expression $\Box_1 \Box_1 p$ is not even a formula. Notice that the reason behind the limiting condition of the definition of $\mathcal{L}_{\infty}$ is that we believe in the separation of the levels of the theories and meta-theories. Therefore, referring to meta-theories in the lower theories or even the meta-theories themselves should be considered as a syntactical error. \\

For the provability-based semantics, consider the following informal definition: The provability interpretation of a formula $A$ is an arithmetical formula which interprets $\Box_n$ as the provability predicate of the theory $T_n$. Formally speaking:
\begin{dfn}\label{t2-2}
Let $(M, \{T_n\}_{n=0}^{\infty})$ be a provability model and $A \in \mathcal{L}_{\infty}$ a formula. Then by an arithmetical substitution $\sigma$ we mean a function from atomic formulas to the set of arithmetical sentences. Moreover, by $A^{\sigma}$ we mean an arithmetical sentence which is resulted by substituting the atomic formulas by $\sigma$ and interpreting any $\Box_n$ as the provability predicate of $T_n$. The interpretation of boolean connectives are themselves. In addition, if $\Gamma$ is a sequence of formulas $A_i$, by $\Gamma^{\sigma}$ we mean the sequence of $A_i^{\sigma}$.
\end{dfn}
\begin{dfn}\label{t2-3}
Let $(M, \{T_n\}_{n=0}^{\infty})$ be a provability model and $A \in \mathcal{L}_{\infty}$ a formula. Then we say $(M, \{T_n\}_{n=0}^{\infty}) \vDash A$ if for any arithmetical substitution $\sigma$, $M \vDash A^{\sigma}$. Moreover, if $\Gamma$ and $\Delta$ are sequences of formulas (not necessarily finite) and $\mathcal{C}$ a class of provability models, by $\mathcal{C} \vDash \Gamma \Rightarrow \Delta$, we mean that for any $(M, \{T_n\}_{n=0}^{\infty}) \in \mathcal{C}$, and for any arithmetical substitution $\sigma$, if $M \vDash \bigwedge \Gamma^{\sigma}$, then $M \vDash \bigvee \Delta^{\sigma}$.
\end{dfn}
Let us illuminate the definition above by an example.
\begin{exam}\label{t2-4} 
Let $(\mathbb{N}, \{T_n\}_{n=0}^{\infty})$ be a pair where $T_0=\PA$ and for any $n$, $T_{n+1}= T_n + \Rfn(T_n)$. Based on the definition, this pair is obviously a provability model. Now, we want to show that the sentence $\Box_{n+1}(\Box_n A \rightarrow A)$ is true in the model. The proof is simple: For any arithmetical substitution $\sigma$, we have $\mathbb{N} \vDash \Pr_{T_{n+1}}(\Pr_{T_n} (A^{\sigma}) \rightarrow A^{\sigma})$ since the theory $T_{n+1}$ can prove the reflection for $T_n$.
\end{exam}
It is time to define some modal theories in this language:
\begin{dfn}\label{t2-5}
Consider the following set of axiom schemes:
\begin{itemize}
\item[$(\mathbf{H})$]
$\Box_{n} A \rightarrow \Box_{n+1} A$
\item[$(\mathbf{K}_h)$]
$\Box_n (A \rightarrow B) \rightarrow (\Box_n A \rightarrow \Box_n B)$
\item[$(\mathbf{4}_h)$]
$\Box_n A \rightarrow \Box_{n+1} \Box_n A$
\item[$(\mathbf{D}_h)$]
$\neg \Box_n \bot $
\item[$(\mathbf{L}_h)$]
$\Box_{n+1}(\Box_n A \rightarrow A) \rightarrow \Box_n A$
\item[$(\mathbf{T}_h)$]
$\Box_n A \rightarrow A$
\item[$(\mathbf{5}_h)$]
$\neg \Box_n A \rightarrow \Box_{n+1} \neg \Box_n A$
\end{itemize}
Let $X$ be a set of these schemes. By $L(X)$ we mean the least set of formulas in $\mathcal{L}_{\infty}$ which contains all classical tautologies on formulas in $\mathcal{L}_{\infty}$, includes all instances of the axioms in the set $X$ for any natural number $n \geq 0$, and is closed under the following rules:
\begin{itemize}
\item[$(\mathbf{MP})$]
If $A \in L(X)$ and $A \rightarrow B \in L(X)$ then $B \in L(X)$.
\item[$(\mathbf{NC}_h)$]
If $A \in L(X)$ then $\Box_n A \in L(X)$ for any natural number $n$ greater than all the numbers occurred in $A$.
\end{itemize}
Moreover, if $\Gamma \cup \{A\} \subseteq \mathcal{L}_{\infty}$, by $\Gamma \vdash_{L(X)} A$ we mean that there exists a finite set $\Delta \subseteq \Gamma$ such that $L(X) \vdash \bigwedge \Delta \rightarrow A$.\\
Finally, we define $\mathbf{K4}_h=L(\mathbf{H}, \mathbf{K}_h, \mathbf{4}_h)$, $\mathbf{KD4}_h=L(\mathbf{H}, \mathbf{K}_h, \mathbf{4}_h, \mathbf{D}_h)$, $\mathbf{S4}_h=L(\mathbf{H}, \mathbf{K}_h, \mathbf{4}_h, \mathbf{T}_h)$, $\mathbf{GL}_h=L(\mathbf{H}, \mathbf{K}_h, \mathbf{4}_h, \mathbf{L}_h)$ and $\mathbf{KD45}_h=L(\mathbf{H}, \mathbf{K}_h, \mathbf{D}_h, \mathbf{4}_h, \mathbf{5}_h)$ and $\mathbf{S5}_h=L(\mathbf{H}, \mathbf{K}_h, \mathbf{4}_h, \mathbf{T}_h, \mathbf{5}_h)$. 
\end{dfn}
\begin{rem}\label{t2-6}
Note that from now on, all formulas are supposed to belong to the set $\mathcal{L}_{\infty}$. For instance, we can not use the axiom $\mathbf{K}_h$ for all $n$'s. $n$ should be bigger than all the indices occurred in $A$ and $B$. 
\end{rem}
In the remaining part of this section, we will investigate some of the proof-theoretical properties of the above-mentioned theories. These investigations are not complete in any sense. The reason is that we focus on the properties that somehow we need for the proof of the soundness-completeness theorems in the following sections.\\ 

First of all, we have to mention that for the soundness theorems for some of the theories, we need a cut-free sequent-style representation of the proofs. Therefore, the next step will be introducing Gentzen-style systems for our logics. To achieve this goal, consider the following set of modal rules:
\begin{center}
  	\begin{tabular}{c c c}
		\AxiomC{$ \{\sigma_r\}_{r \in R}, \{ \gamma_i, \Box_{n_i} \gamma_i\}_{i \in I} \Rightarrow A$}
		\LeftLabel{\tiny{$\Box_{4_h} R$}}
		\UnaryInfC{$\{\Box_n \sigma_r\}_{r \in R}, \{ \Box_{n_i} \gamma_i\}_{i \in I} \Rightarrow \Box_n A$}
		\DisplayProof
		&
		\AxiomC{$ \{\sigma_r\}_{r \in R}, \{ \gamma_i, \Box_{n_i} \gamma_i\}_{i \in I} \Rightarrow $}
		\LeftLabel{\tiny{$\Box_{D_h} R$}}
		\UnaryInfC{$\{\Box_n \sigma_r\}_{r \in R}, \{\Box_{n_i} \gamma_i\}_{i \in I} \Rightarrow $}
		\DisplayProof
		\\[2 ex]
		\end{tabular}
\end{center}
\begin{center}
  	\begin{tabular}{c c}	
        \AxiomC{$ \Gamma, A \Rightarrow \Delta$}
		\LeftLabel{\tiny{$\Box_h L$}}
		\UnaryInfC{$\Gamma, \Box_n A \Rightarrow \Delta$}
		\DisplayProof
		&
		\AxiomC{$ \{\sigma_r\}_{r \in R}, \{ \Box_{n_i} \gamma_i\}_{i \in I} \Rightarrow A$}
		\LeftLabel{\tiny{$\Box_{S_h} R$}}
		\UnaryInfC{$\{\Box_n \sigma_r\}_{r \in R}, \{ \Box_{n_i} \gamma_i\}_{i \in I} \Rightarrow \Box_n A$}
		\DisplayProof
		\\[2 ex]
	\end{tabular}
\end{center}
The condition of applying the rules $\Box_{4_h} R$, $\Box_{D_h} R$ and $\Box_{S_h} R$ is that for all $i \in I$, $n_i < n$.\\
The system $G(\mathbf{K4}_h)$ is the system that consists of the axioms, structural rules and propositional rules as introduced in the Preliminaries and the modal rule $\Box_{4_h} R$. $G(\mathbf{KD4}_h)$ is $G(\mathbf{K4}_h)$ plus the rule $\Box_{D_h} R$ and finally, $G(\mathbf{S4}_h)$ is the system $G(\mathbf{K4}_h)$ when we replace the rule $\Box_{4_h} R$ by $\Box_{S_h} R$ and add the rule $\Box_{h} L$.
\begin{thm}\label{t2-7}
The Systems $G(\mathbf{K4}_h)$, $G(\mathbf{KD4}_h)$ and $G(\mathbf{S4}_h)$ are equivalent to $\mathbf{K4}_h$, $\mathbf{KD4}_h$ and $\mathbf{S4}_h$, respectively.
\end{thm}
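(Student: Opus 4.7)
The plan is to establish, for each of the three pairs, the two standard inclusions between the provable formulas of the Hilbert-style system and the end-sequents of the corresponding sequent calculus. The argument has essentially the same shape in all three cases; only the particular modal right rule (and, for $\mathbf{S4}_h$, the extra left rule $\Box_h L$) varies. I would treat $\mathbf{K4}_h$ in detail and then record the small differences for $\mathbf{KD4}_h$ and $\mathbf{S4}_h$.

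For the Hilbert-to-Gentzen direction I would show that every axiom of $L(X)$ is derivable as an end-sequent and that $(\mathbf{MP})$ and $(\mathbf{NC}_h)$ are admissible. Each axiom reduces to a short sequent derivation built around one use of the modal right rule: $\mathbf{H}$, written as $\Box_n A \Rightarrow \Box_{n+1} A$, is obtained by $\Box_{4_h}R$ at level $n+1$ with $R=\emptyset$ and $\gamma_*=A$, $n_*=n$, the premise being the axiom $A,\Box_n A\Rightarrow A$; $\mathbf{4}_h$ and $\mathbf{K}_h$ are similarly single applications of $\Box_{4_h}R$ with the appropriate choice of the partition $\{\sigma_r\},\{\Box_{n_i}\gamma_i\}$; $\mathbf{D}_h$ is $\Box_{D_h}R$ applied to $\bot\Rightarrow$; and $\mathbf{T}_h$ is $\Box_h L$ applied to $A\Rightarrow A$. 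The rule $(\mathbf{MP})$ is realised by the cut rule, and $(\mathbf{NC}_h)$ is precisely the modal right rule with $R=I=\emptyset$.

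For the Gentzen-to-Hilbert direction I would induct on the height of a $G$-derivation and prove that every derivable sequent $\Gamma\Rightarrow\Delta$ translates to a theorem $\bigwedge\Gamma\to\bigvee\Delta$ of $L(X)$. The axioms, the structural rules, cut, and the propositional rules are dispatched by pure classical reasoning. The only substantial step is the modal right rule. For $\Box_{4_h}R$, the induction hypothesis supplies
\[
\vdash_{\mathbf{K4}_h} \bigwedge_{r\in R}\sigma_r\wedge\bigwedge_{i\in I}(\gamma_i\wedge\Box_{n_i}\gamma_i)\to A.
\]
Every box index in this formula is strictly below $n$ (by the side condition $n_i<n$ and by the $\mathcal{L}_\infty$ formation rules applied to the conclusion), so $(\mathbf{NC}_h)$ may be applied at level $n$; then distributing $\Box_n$ over the antecedent by iterated $\mathbf{K}_h$ yields
\[
\vdash_{\mathbf{K4}_h} \bigwedge_{r\in R}\Box_n\sigma_r\wedge\bigwedge_{i\in I}\Box_n(\gamma_i\wedge\Box_{n_i}\gamma_i)\to \Box_n A.
\]
For each $i$, iterated $\mathbf{H}$ gives $\Box_{n_i}\gamma_i\to\Box_n\gamma_i$, while $\mathbf{4}_h$ followed by iterated $\mathbf{H}$ gives $\Box_{n_i}\gamma_i\to\Box_n\Box_{n_i}\gamma_i$; their conjunction is $\Box_{n_i}\gamma_i\to\Box_n(\gamma_i\wedge\Box_{n_i}\gamma_i)$, which closes the case. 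The rule $\Box_{S_h}R$ is handled by the same template with only $\mathbf{K}_h$, $\mathbf{4}_h$ and $\mathbf{H}$ (its simpler premise omits the $\gamma_i$'s); $\Box_{D_h}R$ additionally invokes $\mathbf{D}_h$ to derive an empty succedent from $\Box_n\bot$; and $\Box_h L$ is an immediate use of $\mathbf{T}_h$.

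The main technical nuisance I anticipate is the bookkeeping of indices against the well-formedness constraint of $\mathcal{L}_\infty$: at every application of $(\mathbf{NC}_h)$ one must check that the rank condition is met, and the ladder $\Box_{n_i}\gamma_i\to\Box_{n_i+1}\gamma_i\to\cdots\to\Box_n\gamma_i$ produced by iterating $\mathbf{H}$ must consistently remain inside $\mathcal{L}_\infty$. Once this accounting is kept tidy, the three equivalences all fall out from the same template.
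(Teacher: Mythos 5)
Your proposal is correct and follows essentially the same route as the paper: axioms simulated by short sequent derivations around one modal rule, $(\mathbf{MP})$ by cut and $(\mathbf{NC}_h)$ by the modal right rule with empty context in one direction, and admissibility of the modal rules in the Hilbert systems via $(\mathbf{NC}_h)$, distribution of $\Box_n$ by $\mathbf{K}_h$, and the $\mathbf{4}_h$/$\mathbf{H}$ ladder in the other. Your explicit note that $\mathbf{4}_h$ must be followed by iterated $\mathbf{H}$ to reach $\Box_n\Box_{n_i}\gamma_i$, and the attention to the rank side conditions on $(\mathbf{NC}_h)$, are points the paper passes over more quickly but the substance is identical.
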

\begin{proof}
Firstly, we will show that all of these sequnet calculi are strong enough to simulate their Hilbert style counterparts. To do that, we will use induction on the length of the Hilbert style proof. First, we have to show that the axioms can be simulated. For the case of classical tautologies, we can prove all classical tautologies in the sequent systems because we have all classical propositional rules available. For the modal axioms it is enough to consider the following proof trees:
\begin{flushleft}
The proof of the axiom $\mathbf{K}_h$ in all systems:
\end{flushleft}
\begin{center}
  	\begin{tabular}{c c c}
  	
		\AxiomC{$A \Rightarrow A$}
		\AxiomC{$B \Rightarrow B$}
		\LeftLabel{\tiny{$\rightarrow L$}}
		\BinaryInfC{$A, A \rightarrow B \Rightarrow B$}
		\LeftLabel{\tiny{$\Box_{4_h}$}, $\Box_{S_h}$}
		\UnaryInfC{$\Box_n A, \Box_n (A \rightarrow B) \Rightarrow \Box_n B$}
		\LeftLabel{\tiny{$\rightarrow R$}}
		\UnaryInfC{$\Box_n (A \rightarrow B) \Rightarrow \Box_n A \rightarrow \Box_n B$}
		\LeftLabel{\tiny{$\rightarrow R$}}
		\UnaryInfC{$\Rightarrow \Box_n (A \rightarrow B) \rightarrow (\Box_n A \rightarrow \Box_n B)$}
		\DisplayProof
		\\[2 ex]
		\end{tabular}
\end{center}
\begin{flushleft}
Two different proofs of the axiom $\mathbf{H}$ in the systems $G(\mathbf{K4}_h)$ and $G(\mathbf{S4}_h)$:
\end{flushleft}
\begin{center}	
		\begin{tabular}{c c}
		\AxiomC{$ A \Rightarrow A $}
		\LeftLabel{\tiny{$wL$}}
		\UnaryInfC{$ A, \Box_n A \Rightarrow A $}
		\LeftLabel{\tiny{$\Box_{4_h} R$}}
		\UnaryInfC{$ \Box_n A \Rightarrow \Box_{n+1} A$}
		\LeftLabel{\tiny{$\rightarrow R$}}
		\UnaryInfC{$\Rightarrow \Box_n A \rightarrow \Box_{n+1} A$}
		\DisplayProof
		&
		\AxiomC{$ A \Rightarrow A $}
		\LeftLabel{\tiny{$\Box_hL$}}
		\UnaryInfC{$ \Box_n A \Rightarrow A $}
		\LeftLabel{\tiny{$\Box_{S_h}R$}}
		\UnaryInfC{$ \Box_n A \Rightarrow \Box_{n+1} A$}
		\LeftLabel{\tiny{$\rightarrow R$}}
		\UnaryInfC{$\Rightarrow \Box_n A \rightarrow \Box_{n+1} A$}
		\DisplayProof
		\\[2 ex]
		\end{tabular}
\end{center}
\begin{flushleft}
The proof of the axiom $\mathbf{4}_h$ in the systems $G(\mathbf{K4}_h)$ and $G(\mathbf{S4}_h)$:
\end{flushleft}
\begin{center}		
		\begin{tabular}{c c}
		\AxiomC{$\Box_n A \Rightarrow \Box_n A $}
		\LeftLabel{\tiny{$wL$}}
		\UnaryInfC{$ A, \Box_{n} A \Rightarrow \Box_n A $}
		\LeftLabel{\tiny{$\Box_{4_h} R$}}
		\UnaryInfC{$ \Box_n A \Rightarrow \Box_{n+1} \Box_n A$}
		\LeftLabel{\tiny{$\rightarrow R$}}
		\UnaryInfC{$\Rightarrow \Box_n A \rightarrow \Box_{n+1} \Box_n A$}
		\DisplayProof
		&
		\AxiomC{$ \Box_n A \Rightarrow \Box_n A $}
		\LeftLabel{\tiny{$\Box_{S_hR}$}}
		\UnaryInfC{$ \Box_n A \Rightarrow \Box_{n+1} \Box_n A$}
		\LeftLabel{\tiny{$\rightarrow R$}}
		\UnaryInfC{$\Rightarrow \Box_n A \rightarrow \Box_{n+1} \Box_n A$}
		\DisplayProof
		\\[2 ex]
		\end{tabular}
\end{center}
\begin{flushleft}
The proof of the axioms $\mathbf{D}_h$ and $\mathbf{T}_h$ in the systems $G(\mathbf{KD4}_h)$ and $G(\mathbf{S4}_h)$, respectively:
\end{flushleft}
\begin{center}		
		\begin{tabular}{c c}
		\AxiomC{$\bot \Rightarrow $}
		\LeftLabel{\tiny{$wR$}}
		\UnaryInfC{$\Box_n \bot, \bot \Rightarrow $}
		\LeftLabel{\tiny{$\Box_{D_h} L$}}
		\UnaryInfC{$\Box_n \bot \Rightarrow $}
		\LeftLabel{\tiny{$wL$}}
		\UnaryInfC{$\Box_n \bot \Rightarrow \bot$}
		\LeftLabel{\tiny{$\rightarrow R$}}
		\UnaryInfC{$\Rightarrow \Box_n \bot \rightarrow \bot$}
		\DisplayProof
  	    &
		\AxiomC{$A \Rightarrow A$}
		\LeftLabel{\tiny{$\Box_hL$}}
		\UnaryInfC{$\Box_n A \Rightarrow A$}
		\LeftLabel{\tiny{$\rightarrow R$}}
		\UnaryInfC{$\Rightarrow \Box_n A \rightarrow A$}
		\DisplayProof
		\end{tabular}
		
\end{center}
For the rules, the modus ponens case and the necessitation rule will be easily handled by the cut rule and the rule $\Box_{4_h}R$ or $\Box_{S_h}R$, respectively. To prove the converse, it is enough to prove that the modal rules can be simulated in the corresponding Hilbert style systems. In a more precise word, we want to show that if $\Gamma \Rightarrow \Delta$ is provable in the sequent calculus, then $\bigwedge \Gamma \rightarrow \bigvee \Delta$ is provable in its Hilbert style counterpart. To achieve this goal, it is enough to show that the system $\mathbf{K4}_h$ admits the rule $\Box_{4_h}R$, $\mathbf{KD4}_h$ admits the rules $\Box_{4_h}R$ and $\Box_{D_h}R$ and finally $\mathbf{S4}_h$ admits the rules $\Box_{S_h}R$ and $\Box_hL$.\\

1. The case of $\mathbf{K4}_h$. If we have $ \{\sigma_r\}_{r \in R}, \{ \gamma_i, \Box_{n_i} \gamma_i\}_{i \in I} \Rightarrow A$, then by IH,
\[
\mathbf{K4}_h \vdash \bigwedge \{\sigma_r\}_{r \in R}, \{ \gamma_i, \Box_{n_i} \gamma_i\}_{i \in I} \rightarrow A.
\]
Then since $n_i<n$, by a combination of the necessitation rule and some use of the axiom $\mathbf{K}_h$ 
\[
\mathbf{K4}_h \vdash \Box_n \bigwedge \{\sigma_r\}_{r \in R}, \{ \gamma_i, \Box_{n_i} \gamma_i\}_{i \in I} \rightarrow \Box_n A.
\]
Using the axiom $\mathbf{K}_h$ we can prove that $\Box_n$ commutes with conjunctions. Hence
\[
\mathbf{K4}_h \vdash  \bigwedge \{ \Box_n \sigma_r\}_{r \in R}, \{\Box_n \gamma_i, \Box_n \Box_{n_i} \gamma_i\}_{i \in I} \rightarrow \Box_n A.
\]
By $\mathbf{4}_h$ we have
\[
\mathbf{K4}_h \vdash  \bigwedge \{ \Box_n \sigma_r\}_{r \in R}, \{\Box_n \gamma_i, \Box_{n_i} \gamma_i\}_{i \in I} \rightarrow \Box_n A
\]
and by $\mathbf{H}$ and the fact that $n_i<n$ we have
\[
\mathbf{K4}_h \vdash  \bigwedge \{ \Box_n \sigma_r\}_{r \in R}, \{ \Box_{n_i} \gamma_i\}_{i \in I} \rightarrow \Box_n A.
\]

2. The case of $\mathbf{KD4}_h$. Showing that $\mathbf{KD4}_h$ admits the rule $\Box_{4_h}$ is similar to the case 1. To show that the system admits the rule $\Box_{D_h}R$, it is enough to replace $A$ in the case 1 by $\bot$. We have
\[
\mathbf{KD4}_h \vdash  \bigwedge \{ \Box_n \sigma_r\}_{r \in R}, \{ \Box_{n_i} \gamma_i\}_{i \in I} \rightarrow \Box_n \bot
\]
but we know that $\mathbf{KD4}_h \vdash \neg \Box_n \bot$. Therefore,
 \[
\mathbf{KD4}_h \vdash  \bigwedge \{ \Box_n \sigma_r\}_{r \in R}, \{ \Box_{n_i} \gamma_i\}_{i \in I} \rightarrow \bot.
\]

3. The case of $\mathbf{S4}_h$. The proof for the rule $\Box_{S_h}R$ is similar to the case 1. For the rule $\Box_hL$, if we have {$ \Gamma, A \Rightarrow \Delta$}, then by IH,
\[
\mathbf{S4}_h \vdash A \wedge \bigwedge \Gamma \rightarrow \bigvee \Delta.
\]
Since we know $\mathbf{S4}_h \vdash \Box_n A \rightarrow A$, hence
\[
\mathbf{S4}_h \vdash \Box_n A \wedge \bigwedge \Gamma \rightarrow \bigvee \Delta.
\] 
\end{proof}
\begin{thm}(Cut Elimination)\label{t2-8}
The Systems $G(\mathbf{K4}_h)$, $G(\mathbf{KD4}_h)$ and $G(\mathbf{S4}_h)$ have the cut elimination property.
\end{thm}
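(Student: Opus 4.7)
The plan is to adapt Pottinger's cut-elimination argument for $G(\mathbf{K4})$, $G(\mathbf{KD4})$, and $G(\mathbf{S4})$ \cite{Po} to the hierarchical setting. I will use a primary induction on the complexity of the cut formula and a secondary induction on the sum of the heights of the two subproofs ending in the cut. As is standard, I will generalise cut to the mix rule in order to accommodate contraction on the cut formula; once mix is eliminable, ordinary cut elimination follows at once.

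All propositional and structural reductions are verbatim as in the non-hierarchical case, so the substantive work is confined to the case in which the cut formula $\Box_n A$ is principal on both sides. In $G(\mathbf{K4}_h)$, the right premise is introduced by $\Box_{4_h} R$ from a sequent $\{\sigma_r\}_{r \in R}, \{\gamma_i, \Box_{n_i}\gamma_i\}_{i \in I} \Rightarrow A$ with all $n_i < n$. Since there is no $\Box_h L$ in $G(\mathbf{K4}_h)$, every occurrence of $\Box_n A$ in the left premise's antecedent must, after permuting past weakenings, originate either as a principal formula $\Box_{n'} \sigma_r$ or as a side formula $\Box_{n'_i}\gamma'_i$ of some lower $\Box_{4_h} R$ application. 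In either subcase the premise of that lower rule already contains $A$ in its antecedent (alongside a possibly remaining $\Box_n A$), so I first mix out the residual $\Box_n A$ occurrences using the secondary IH, then cut on $A$ using the primary IH (since $A$ is of strictly smaller complexity than $\Box_n A$), and finally reapply $\Box_{4_h} R$ to reconstitute the conclusion. The analogous reduction works for $\Box_{D_h} R$ in $G(\mathbf{KD4}_h)$, whose only formal difference is an empty succedent. The $G(\mathbf{S4}_h)$ case is easier still: when $\Box_n A$ is introduced on the left by $\Box_h L$ from $\Gamma, A \Rightarrow \Delta$, a single cut on $A$ suffices, and the right-principal case with $\Box_{S_h} R$ is handled analogously to the $\Box_{4_h} R$ case above.

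The principal novelty, and the likeliest source of friction, lies in tracking the index conditions $n_i < n$ through each reduction. When reconstituting a $\Box_{4_h} R$ (respectively $\Box_{D_h} R$, $\Box_{S_h} R$) application after a cut, one must verify that every side-formula index of the new premise remains strictly below the index of the outer modality. Fortunately, each newly introduced side formula imported from the right derivation carries an index $n_i < n$, while the outer index $n'$ of the reconstituted rule satisfies $n' \geq n$ in each subcase (with equality when $\Box_n A$ was a principal formula of the lower rule, and strict inequality when it was a side formula), so the requisite inequalities are automatic. Verifying this in each subcase is the only bookkeeping genuinely new to the hierarchical setting; beyond it, the proof follows \cite{Po} step for step.
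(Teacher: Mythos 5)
Your proposal is correct and follows essentially the same route as the paper: a mix-style double induction (the paper's ``principal lemma'' for $d$-proofs is exactly your mix formulation), the same two subcases according to whether $\Box_n A$ enters the antecedent as a principal or a side formula of the lower modal rule, and the same observation that the index conditions propagate because $n_i < n \leq k$ in each subcase. No substantive difference to report.
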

\begin{proof}
The proof is similar to the usual proof of cut elimination in the systems $G(\mathbf{K4})$, $G(\mathbf{KD4})$ and $G(\mathbf{S4})$. The strategy is proving the following principal lemma for the systems $G(\mathbf{K4}_h)$, $G(\mathbf{KD4}_h)$ and $G(\mathbf{S4}_h)$. Define the complexity of a formula $A$, as its length. Then by a $d$-proof we mean a proof whose cut formulas have the complexity strictly less than $d$. We claim:\\

\textbf{Principal Lemma.} Let $A$ be a formula of the complexity $d$ and $\pi$ and $\pi'$ be some $d$-proofs of $\Gamma \Rightarrow \Delta, A$ and $\Gamma', A \Rightarrow \Delta'$, respectively. Then there is a $d$-proof $\pi''$ of $\Gamma, \Gamma'-A \Rightarrow \Delta-A, \Delta'$ in which $\Gamma'-A$ means the multiset $\Gamma'$ after eliminating all occurrences of $A$. The same holds for $\Delta -A$.\\

Since the proof of the principal lemma for the systems $G(\mathbf{K4}_h)$, $G(\mathbf{KD4}_h)$ and $G(\mathbf{S4}_h)$ are more or less the same, we will prove it for $G(\mathbf{KD4}_h)$. The proof as usual is by induction on the addition of the height of $\pi$ and the height of $\pi'$. The cases in which the last rule of $\pi$ or $\pi'$ is an axiom, structural or propositional rule are easy to check. Therefore, we will focus on the case that the last rule of both $\pi$ and $\pi'$ are modal rules. Since the right side of $\Gamma \Rightarrow \Delta, A$ is not empty, the last rule of $\pi$ should be $\Box_{4_h} R$. But the last rule of $\pi'$ could be both of the rules $\Box_{4_h} R$ or $\Box_{D_h} R$. Assume that it is $\Box_{4_h} R$. There are two cases. The first one is when $A$ is one of the boxed formulas in the premise of the last rule of $\pi'$. The other case is when $A$ is between un-boxed formulas in the premise sequent. For the first case we have the following combination:\\

\begin{center}
\begin{tabular}{c c}
		\AxiomC{$\{\sigma_r\}_{r \in R}, \{ \gamma_i, \Box_{n_i} \gamma_i\}_{i \in I} \Rightarrow A$}
		\UnaryInfC{$\{\Box_n \sigma_r\}_{r \in R}, \{ \Box_{n_i} \gamma_i\}_{i \in I} \Rightarrow \Box_n A$}
		\DisplayProof
		&
        \AxiomC{$\{\sigma'_s\}_{s \in S}, \{ \gamma'_j, \Box_{m_j} \gamma'_j\}_{j \in J}, A, \Box_n A \Rightarrow B$}
		\UnaryInfC{$\{\Box_k \sigma'_s\}_{s \in S}, \{ \Box_{m_j} \gamma'_j\}_{j \in J}, \Box_n A \Rightarrow \Box_k B$}
		\DisplayProof
\end{tabular}
\end{center}

Consider the proof $\pi$ of
\[
\{\Box_n \sigma_r\}_{r \in R}, \{ \Box_{n_i} \gamma_i\}_{i \in I} \Rightarrow \Box_n A
\]
and the subproof of $\pi'$ that leads to 
\[
\{\sigma'_s\}_{s \in S}, \{ \gamma'_j, \Box_{m_j} \gamma'_j\}_{j \in J}, A, \Box_n A \Rightarrow B.
\]
By IH, we can construct a $d$-proof of
\[
(\{\sigma'_s\}_{s \in S}, \{ \gamma'_j, \Box_{m_j} \gamma'_j\}_{j \in J}, A)-\Box_n A, \{\Box_n \sigma_r\}_{r \in R}, \{ \Box_{n_i} \gamma_i\}_{i \in I} \Rightarrow B.
\]
Then use cut on $A$ for the following two sequents:
\[
\{\sigma_r\}_{r \in R}, \{ \gamma_i, \Box_{n_i} \gamma_i\}_{i \in I} \Rightarrow A
\]
and
\[
(\{\sigma'_s\}_{s \in S}, \{ \gamma'_j, \Box_{m_j} \gamma'_j\}_{j \in J}, A)-\Box_n A, \{\Box_n \sigma_r\}_{r \in R}, \{ \Box_{n_i} \gamma_i\}_{i \in I} \Rightarrow B
\]
to have
\[
(\{\sigma'_s\}_{s \in S}, \{ \gamma'_j, \Box_{m_j} \gamma'_j\}_{j \in J})-\{\Box_n A, A\}, \{\sigma_r\}_{r \in R},  \{\gamma_i, \Box_{n_i} \gamma_i\}_{i \in I}, \{\Box_n \sigma_r\}_{r \in R}, \{ \Box_{n_i} \gamma_i\}_{i \in I} \Rightarrow B
\]
and then use $\Box_{4_h} R$ to have
\[
\{\Box_n \sigma_r\}_{r \in R}, \{\Box_k \sigma_s\}_{s \in S}, \{ \Box_{n_i} \gamma_i\}_{i \in I}, (\{ \Box_{m_j} \gamma'_j\}_{j \in J})-\Box_n A \Rightarrow \Box_k B.
\]
The important thing is that since the rule $\Box_{4_h} R$ is applied in $\pi$ and $\pi'$ we have to have $n_i < n$ and $n, m_j<k$, and therefore $n, n_i, m_j<k$, which guarantees the application of the last $\Box_{4_h} R$ rule in the above tree. Notice that this proof eliminates all the occurrences of $\Box_n A$ in $\{\Box_k \sigma'_s\}_{s \in S}, \{ \Box_{m_j} \gamma'_j\}_{j \in J}$. The reason is that $n<k$ and hence there is not any occurrence of $\Box_n A$ in $\{\Box_k \sigma'_s\}_{s \in S}$. On the other hand, the proof eliminates all the occurrences of $\Box_n A$ in $ \{ \Box_{m_j} \gamma'_j\}_{j \in J}$, which completes the claim. \\

For the second case we have the following combination:

\begin{center}
\begin{tabular}{c c}
		\AxiomC{$\{\sigma_r\}_{r \in R}, \{ \gamma_i, \Box_{n_i} \gamma_i\}_{i \in I} \Rightarrow A$}
		\UnaryInfC{$\{\Box_k \sigma_r\}_{r \in R}, \{ \Box_{n_i} \gamma_i\}_{i \in I} \Rightarrow \Box_k A$}
		\DisplayProof
		\\[4 ex]
\end{tabular}
\begin{tabular}{c}	
		\AxiomC{$\{\sigma'_s\}_{s \in S}, A, A, \ldots, A, \{ \gamma'_j, \Box_{m_j} \gamma'_j\}_{j \in J} \Rightarrow B$}
		\UnaryInfC{$\{\Box_k \sigma'_s\}_{s \in S}, \Box_k A, \Box_k A, \ldots, \Box_k A, \{ \Box_{m_j} \gamma'_j\}_{j \in J} \Rightarrow \Box_k B$}
		\DisplayProof
		\end{tabular}
\end{center}

Then consider the subproofs of $\pi$ and $\pi'$ leading to the sequents
\[ 
\{\sigma_r\}_{r \in R}, \{ \gamma_i, \Box_{n_i} \gamma_i\}_{i \in I} \Rightarrow A
\]
and 
\[
\{\sigma'_s\}_{s \in S}, A, A, \ldots, A, \{ \gamma'_j, \Box_{m_j} \gamma'_j\}_{j \in J} \Rightarrow B.
\]
By some cuts on $A$, some contractions and applying the rule $\Box_{4_h} R$, we have 

\begin{center}
\begin{tabular}{c c}
		\AxiomC{$\{\sigma_r\}_{r \in R}, \{ \gamma_i, \Box_{n_i} \gamma_i\}_{i \in I} \Rightarrow A$}
		\AxiomC{ $\{\sigma'_s\}_{s \in S}, A, A, \ldots, A, \{ \gamma'_j, \Box_{m_j} \gamma'_j\}_{j \in J} \Rightarrow B$ }
		\BinaryInfC{$\{\sigma'_s\}_{s \in S}, \{ \gamma'_j, \Box_{m_j} \gamma'_j\}_{j \in J}, \{\sigma_r\}_{r \in R},  \{\gamma_i, \Box_{n_i} \gamma_i\}_{i \in I} \Rightarrow B$}
		\UnaryInfC{$\{\Box_k \sigma_r\}_{r \in R}, \{\Box_k \sigma'_s\}_{s \in S}, \{ \Box_{n_i} \gamma_i\}_{i \in I}, \{ \Box_{m_j} \gamma'_j\}_{j \in J} \Rightarrow \Box_k B$}
		\DisplayProof
		\end{tabular}
\end{center}

Again, since $n_i<k$ and $m_j<k$ we can apply the last rule.\\

Finally, if the last rule of $\pi'$ is $\Box_D R$, then it is enough to erase all the occurrences of $B$ and $\Box_k B$'s in the above proof. The proof would be exactly the same.
\end{proof}
The other ingredient that we will need, is a strong version of the necessitation rule.
\begin{thm}(Strong Necessitation)\label{t2-9}
\begin{itemize}
\item[$(i)$]
If $\bigwedge_{i=1}^{k}\Box_{m_i} A_i \vdash_{\mathbf{K4}_h} A$ and for any $i$, $m_i \leq n$, then $\bigwedge_{i=1}^{k}\Box_{m_i} A_i \vdash_{\mathbf{K4}_h} \Box_n A$.
\item[$(ii)$]
If $\bigwedge_{i=1}^{k}\Box_{m_i} A_i \vdash_{\mathbf{S4}_h} A$ and for any $i$, $m_i \leq n$, then $\bigwedge_{i=1}^{k}\Box_{m_i} A_i \vdash_{\mathbf{S4}_h} \Box_n A$.
\end{itemize}
\end{thm}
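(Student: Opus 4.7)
The plan is to follow the classical scheme for strong necessitation in $\mathbf{K4}$, adapted to the hierarchical setting. Starting from the hypothesis $\vdash_{\mathbf{K4}_h} \bigwedge_i \Box_{m_i} A_i \to A$, I would apply the necessitation rule $\mathbf{NC}_h$ with index $n$ to obtain $\vdash \Box_n(\bigwedge_i \Box_{m_i} A_i \to A)$, distribute $\Box_n$ over $\to$ via $\mathbf{K}_h$, commute $\Box_n$ with the conjunction (which is derivable using $\mathbf{K}_h$), and close using axiom $\mathbf{4}_h$ together with iterated applications of $\mathbf{H}$ to derive $\Box_{m_i} A_i \to \Box_n \Box_{m_i} A_i$ for each $i$. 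Since $\Box_n A$ being well-formed in $\mathcal{L}_{\infty}$ forces $r(A) < n$, when every $m_i < n$ strictly the formula $\bigwedge_i \Box_{m_i} A_i \to A$ has rank $< n$ and $\mathbf{NC}_h$ with index $n$ applies directly; this case is routine.

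The subtle case is when some $m_i$ equals $n$: the formula then has rank exactly $n$ and $\mathbf{NC}_h$ with $\Box_n$ is blocked by its rank restriction. My plan is to reduce this to the strict case. Writing $J_1 = \{i : m_i = n\}$ and $J_2 = \{i : m_i < n\}$, I aim for a key lemma: in $\mathbf{K4}_h$, the formulas $\{\Box_n A_i\}_{i \in J_1}$ are redundant for deriving $A$, so $\{\Box_{m_j} A_j\}_{j \in J_2} \vdash_{\mathbf{K4}_h} A$. The intuition is that $\mathbf{K4}_h$ has no axiom capable of reducing a formula's rank back below $n$, so rank-$n$ assumptions cannot contribute to deriving the rank-$<n$ conclusion $A$. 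For $\mathbf{S4}_h$, where axiom $\mathbf{T}_h$ does lower rank, I instead replace each $\Box_n A_i$ by its rank-$<n$ un-boxed counterpart $A_i$, yielding $\{A_i\}_{i \in J_1} \cup \{\Box_{m_j} A_j\}_{j \in J_2} \vdash_{\mathbf{S4}_h} A$.

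After the reduction, all antecedent formulas have rank $< n$, so the argument of the first paragraph applies and delivers $\Box_n A$ from the reduced antecedent; weakening then yields the full conclusion in both (i) and (ii). The main technical obstacle is the reduction lemma itself, which I would approach via cut elimination (Theorem \ref{t2-8}): given a cut-free proof of $\{\Box_{m_i} A_i\}_i \Rightarrow A$, the subformula property confines every rank-$n$ formula appearing in the proof to a subformula of some $\Box_n A_i$ in the antecedent. An induction on the cut-free proof structure then systematically removes (in $\mathbf{K4}_h$) or un-boxes (in $\mathbf{S4}_h$, using the interplay of $\Box_h L$ and $\mathbf{T}_h$) these formulas; the most delicate case is a modal rule application ($\Box_{4_h} R$ or $\Box_{S_h} R$) whose outer index equals $n$, but the subformula property restricts the form of its principal formula, which should make the induction manageable.
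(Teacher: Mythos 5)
Your decomposition coincides with the paper's: both arguments isolate the hypotheses $\Box_{m_i}A_i$ with $m_i=n$ (the ones blocking $\mathbf{NC}_h$), show they can be discarded outright in $\mathbf{K4}_h$ and replaced by their un-boxed versions $A_i$ in $\mathbf{S4}_h$, and then run the routine necessitate--distribute--$\mathbf{4}_h$/$\mathbf{H}$ argument on the remaining rank-$<n$ antecedent. Where you genuinely diverge is in how the reduction lemma is proved. The paper stays at the Hilbert level and uses a relativizing translation $(\cdot)^Z$ with $Z=\bigwedge_{i\in J_1}A_i$: boxes $\Box_j$ with $j<n$ are left untouched while $(\Box_j B)^Z=\Box_j(Z\rightarrow B^Z)$ for $j\geq n$; an induction on Hilbert proofs gives $\vdash A\Rightarrow\vdash A^Z$ for $\mathbf{K4}_h$ (and $Z\vdash A^Z$ for $\mathbf{S4}_h$), and applied to $\bigwedge_{i\in J_1}\Box_n A_i\rightarrow(\cdots\rightarrow A)$ this turns each hypothesis $\Box_n A_i$ into the theorem $\Box_n(Z\rightarrow A_i)$, so the rank-$n$ hypotheses are discharged for free. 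Your route through cut elimination also works, but the step you leave sketched is where all the labour sits: you must run an ancestor-tracing/polarity argument showing that in a cut-free proof of $\{\Box_{m_i}A_i\}_i\Rightarrow A$ no rank-$n$ formula can occupy a succedent position (a succedent occurrence of $\Box_n A_i$ could only descend to the end-sequent by being wrapped into a strictly larger formula via $\neg L$ or $\rightarrow L$, and no subformula of the end-sequent properly contains a rank-$n$ formula; likewise $\Box_n A_i$ cannot be among the $\Box_{n_i}\gamma_i$ of a modal right rule, since that would force a conclusion box of index $>n$). It follows that every occurrence of $\Box_n A_i$ enters by weakening, or by $\Box_h L$ in $G(\mathbf{S4}_h)$, and can be deleted or un-boxed as you intend. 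Both proofs of the lemma are sound; the paper's translation is shorter and independent of Theorem \ref{t2-8}, while your proof-theoretic version makes it more transparent \emph{why} the rank-$n$ hypotheses are inert in $\mathbf{K4}_h$.
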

\begin{proof}
To prove the theorem, we need the following translation. Assume $n$ is a number and $Z$ is a formula such that $r(Z)<n$. Define $A^Z$ inductively as follows:
\begin{itemize}
\item[$(i)$]
If $B$ is an atom, $B^Z=B$.
\item[$(ii)$]
$(B \circ C)^Z=B^Z \circ C^Z$ for all $\circ \in \{\wedge, \vee, \rightarrow \}$
\item[$(iii)$]
$(\neg B)^Z=\neg B^Z$.
\item[$(iv)$]
If $i<n$, $(\Box_i B)^Z=\Box_i B^Z$ and if $i \geq n$ then $(\Box_i B)^Z=\Box_i (Z \rightarrow B^Z)$.
\end{itemize} 
We have the following claim.\\

\textbf{Claim.} If $\mathbf{K4}_h \vdash A$ then $ \mathbf{K4}_h \vdash A^Z$. And if $\mathbf{S4}_h \vdash A$ then $Z \vdash_{\mathbf{S4}_h} A^Z$.\\

We will just prove the case for $\mathbf{K4}_h$. The case for $\mathbf{S4}_h$ is similar. First, observe that for any formula $C$ if $n>r(C)$, then $C^Z=C$. And secondly, $r(C^Z)=r(C)$ which is easy to prove by induction on the complexity of $C$.\\
To prove the claim for $\mathbf{K4}_h$ we will use induction on the length of the proof of $A$. The case for axioms and the case of the modus ponens rule are easy to check. If $A$ is a result of the necessitation rule then we have $A=\Box_m B$ and $\mathbf{K4}_h \vdash B$. If $m<n$, then by the first observation $A^Z=A$. Therefore, the claim is trivial. If $m \geq n$ then by IH we have $ \vdash_{\mathbf{K4}_h} B^Z$. Hence, $\mathbf{K4} \vdash Z \rightarrow B^Z$. Since $r(Z)<n \leq m$ and $r(B^Z)=r(B)<m$, by necessitation we have $\mathbf{K4}_h \vdash \Box_m ( Z \rightarrow B^Z)$. Hence, $\mathbf{K4}_h \vdash (\Box_m B)^Z$. \qed \\

To prove the theorem, assume that $\bigwedge_{i=1}^{k}\Box_{m_i} B_i \wedge \bigwedge_{j=1}^{r} \Box_n C_j \vdash_{\mathbf{K4}_h} A$ where $m_i<n$. Then $\bigwedge_{j=1}^{r} \Box_n C_j \vdash_{\mathbf{K4}_h} \bigwedge_{i=1}^{k}\Box_{m_i} B_i \rightarrow A$. Pick $Z=\bigwedge _{j=1}^{r} C_j$. Since $r(Z) < n $, we have
\[
\mathbf{K4} \vdash (\bigwedge _{j=1}^{r} \Box_n C_j)^Z \rightarrow (\bigwedge_{i=1}^{k}\Box_{m_i} B_i \rightarrow A)^Z.
\]
Since $r(\bigwedge_{i=1}^{k}\Box_{m_i} B_i \rightarrow A)<n$ we have $(\bigwedge_{i=1}^{k}\Box_{m_i} B_i \rightarrow A)^Z=\bigwedge_{i=1}^{k}\Box_{m_i} B_i \rightarrow A$. Moreover, because we have $(\Box_n C_j)^Z=\Box_n (\bigwedge_{j=1}^{r} C_j \rightarrow C_j)$, then $\mathbf{K4}_h \vdash \bigwedge_{i=1}^{k}\Box_{m_i} B_i \rightarrow A$. Hence, by necessitation and the axiom $\mathbf{K}_h$ we have 
\[
\mathbf{K4}_h \vdash \bigwedge_{i=1}^{k}\Box_n \Box_{m_i} B_i \rightarrow \Box_n A.
\]
Then by axiom $\mathbf{4}_h$ we have 
\[
\bigwedge_{i=1}^{k}\Box_{m_i} B_i \wedge \bigwedge_{j=1}^{r} \Box_n C_j \vdash_{\mathbf{K4}_h} \Box_n A.
\]

For $\mathbf{S4}_h$, the proof is similar, but the only difference is that by the claim we will have $\mathbf{S4}_h \vdash \bigwedge_{j=0}^{r} C_j \wedge \bigwedge_{i=1}^{k}\Box_{m_i} B_i \rightarrow A$. Hence by the same considerations as above we have
\[
\bigwedge_{i=1}^{k}\Box_{m_i} B_i \wedge \bigwedge_{j=1}^{r} \Box_n C_j \vdash_{\mathbf{S4}_h} \Box_n A.
\]
\end{proof}
We stated that our strategy to prove the completeness of the hierarchical modal logics is reducing their completeness to the completeness of the usual modal theories. To achieve this goal we need the following translation and also its completeness.
\begin{dfn}\label{t2-10}
Let $Q=\{q_n\}_{n=0}^{\infty}$ be a set of new atomic variables which are not occurred in the formulas in $\mathcal{L}_{\infty}$. Then define the translation $t: \mathcal{L}_{\infty} \to \mathcal{L}_{\Box}(Q)$ as follows:
\begin{itemize}
\item[$(i)$]
If $A$ is an atom, $A^t=A$.
\item[$(ii)$]
$(A \circ B)^t=A^t \circ B^t$ for all $\circ \in \{\wedge, \vee, \rightarrow \}$
\item[$(iii)$]
$(\neg A)^t=\neg A^t$.
\item[$(iv)$]
$(\Box_n A)^t=\Box (\bigwedge_{i=0}^{n}q_i \rightarrow A^t)$.
\end{itemize} 
\end{dfn}
We will prove the following completeness of the translation $t$ between $\mathbf{K4}(Q)$ and $\mathbf{K4}_h$ and also between $\mathbf{S4}(Q)$ and $\mathbf{S4}_h$.
\begin{thm}\label{t2-11}
\begin{itemize}
\item[$(i)$]
If $\Gamma^t \vdash_{\mathbf{K4}(Q)} A^t$, then $\Gamma \vdash_{\mathbf{K4}_h} A$.
\item[$(ii)$]
If $\Gamma^t \vdash_{\mathbf{S4}(Q)} A^t$, then $\Gamma \vdash_{\mathbf{S4}_h} A$.
\end{itemize}
\end{thm}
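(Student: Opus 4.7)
My plan is to argue semantically via Kripke models for the hierarchical logics. A Kripke model for $\mathbf{K4}_h$ is a tuple $(W, \{R_n\}_{n \geq 0}, V)$ where each $R_n$ is a binary relation on $W$ satisfying $R_{n+1} \subseteq R_n$ (from axiom $\mathbf{H}$) and $R_{n+1} \circ R_n \subseteq R_n$ (from axiom $\mathbf{4}_h$), with the standard clause $w \vDash \Box_n B$ iff $v \vDash B$ for every $v$ with $w R_n v$. For $\mathbf{S4}_h$ we additionally demand each $R_n$ reflexive. Soundness is routine by inspection of the axioms.

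The key lemma is Kripke completeness, which I would obtain via the standard canonical model: worlds are maximal $\mathbf{K4}_h$-consistent (resp. $\mathbf{S4}_h$-consistent) subsets of $\mathcal{L}_{\infty}$, and $w R_n v$ iff $\{B : \Box_n B \in w\} \subseteq v$. The interesting step is the existence half of the truth lemma for $\Box_n$: given $\Box_n A \notin w$, one shows $\{B : \Box_n B \in w\} \cup \{\neg A\}$ is consistent. If not, some finitely many $\Box_n B_1, \ldots, \Box_n B_k \in w$ satisfy $\bigwedge B_i \vdash A$ in the relevant logic; because $\Box_n B_i$ and $\Box_n A$ are formulas of $\mathcal{L}_{\infty}$ we have $r(B_i), r(A) < n$, so $\mathbf{NC}_h$ applies to $\bigwedge B_i \to A$, and a standard use of $\mathbf{K}_h$ plus the commutation of $\Box_n$ with conjunctions produces $\Box_n A \in w$, contradiction. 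The frame conditions on the canonical $R_n$ follow directly from $\mathbf{H}$ and $\mathbf{4}_h$; reflexivity in the $\mathbf{S4}_h$-case is given by $\mathbf{T}_h$.

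Given a $\mathbf{K4}_h$-countermodel $(W, \{R_n\}, V)$ with $w_0 \vDash \Gamma$ and $w_0 \not\vDash A$, I would build a $\mathbf{K4}(Q)$-Kripke countermodel on $W^* = W \times \mathbb{N}$ by setting $(w, n) R^* (v, m)$ iff $w R_m v$ and $m < n$, retaining $V$ on the original atoms, and declaring $V^*(q_i)((w, n)) = 1$ iff $i \leq n$. Transitivity of $R^*$ is forced by the strict inequality $m < n$ combined with the iterated consequence $R_m \circ R_k \subseteq R_k$ of $\mathbf{4}_h$ and $\mathbf{H}$ valid whenever $m > k$, which exactly fits the chain $k < m < n$. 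An induction on $B \in \mathcal{L}_{\infty}$ then yields
\[
(w, n) \vDash_{\mathbf{K4}(Q)} B^t \iff w \vDash_{\mathbf{K4}_h} B \quad \text{whenever } n > r(B),
\]
the only substantive case being $B = \Box_k C$: the strict inequality $k < n$ lets $m = k$ serve as a witness for the existential direction, while $R_m \subseteq R_k$ handles the universal one. Choosing $n_0 > r(\Gamma \cup \{A\})$, the world $(w_0, n_0)$ witnesses $\Gamma^t \not\vDash A^t$ in $\mathbf{K4}(Q)$, and soundness of $\mathbf{K4}(Q)$ then contradicts the hypothesis $\Gamma^t \vdash_{\mathbf{K4}(Q)} A^t$. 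For $(ii)$, the recipe is identical modulo augmenting $R^*$ with the diagonal so that each $(w, n) R^* (w, n)$ is retained, thereby restoring reflexivity; the transitivity case-split absorbs two extra trivial cases, and in the $\Box_k$-case of truth preservation the new diagonal contribution is supplied by $\mathbf{T}_h$.

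The main obstacle will be verifying the product construction cleanly: the strict inequality $m < n$ is essential for transitivity of $R^*$ in $\mathbf{K4}_h$ (since the canonical $R_n$ need not be individually transitive), and the $\mathbf{S4}_h$-case must carefully mesh the diagonal augmentation with the non-diagonal configuration so that both transitivity and the $\Box_k$-case of the truth-preservation induction go through.
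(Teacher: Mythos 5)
Your proposal is correct, but it takes a genuinely different route from the paper. The paper's argument is entirely proof-theoretic: it takes a cut-free $G(\mathbf{K4})$- or $G(\mathbf{S4})$-proof of $\Rightarrow A^t$, observes that every formula in it lies in a syntactically constrained class $X$, converts the proof into a ``good'' one (in which the rank of the boxed conclusion of each modal rule dominates the ranks of the boxed side formulas) by substituting $\bot$ for high-index $q_j$'s at the cost of extra side formulas whose reverse translation is $\top$, and then pushes a left inverse $s$ of $t$ through the proof using the strong necessitation theorem (Theorem \ref{t2-9}). You instead establish Kripke completeness of $\mathbf{K4}_h$ and $\mathbf{S4}_h$ over frames satisfying $R_{n+1}\subseteq R_n$ and $R_{n+1}\circ R_n\subseteq R_n$ (plus reflexivity of each $R_n$ for $\mathbf{S4}_h$) and transfer a countermodel along the product construction on $W\times\mathbb{N}$. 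The two delicate points do go through: transitivity of $R^*$ follows from $R_m\circ R_k\subseteq R_k$ for $m>k$, obtained by chaining the $\mathbf{H}$-condition down to $R_{k+1}$ and then applying the $\mathbf{4}_h$-condition, and the $\Box_k$-case of the truth transfer uses exactly $k=r(\Box_k C)<n$ to produce the successor $(v,k)$ in one direction and $R_m\subseteq R_k$ for $m\geq k$ in the other. What your route buys is the avoidance of cut elimination, the good-proof surgery and the first-kind/second-kind bookkeeping, and it yields Kripke completeness of the hierarchical logics as a byproduct of independent interest; what it costs is a canonical-model construction for the restricted language $\mathcal{L}_{\infty}$ that the paper never develops (the existence lemma does work, since $r(B_i),r(A)<n$ licenses $\mathbf{NC}_h$, but it must be written out, as must the verification that $wR_nv$ iff $\{B:\Box_n B\in w\}\subseteq v$ satisfies the composite frame conditions even though no single $R_n$ is transitive), whereas the paper's sequent calculi and cut elimination are needed elsewhere anyway and its argument transforms proofs into proofs. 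One small point to tidy up: reduce to finite $\Gamma$ before choosing $n_0>r(\Gamma\cup\{A\})$, since $r(\Gamma)$ need not exist for infinite $\Gamma$; the paper makes the same reduction at the outset.
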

\begin{proof}
First of all, it is obvious that it is enough to prove the theorem for $\Gamma=\emptyset$. The reason is that in both sides, proofs just use a finite subset of $\Gamma$ and $\Gamma^t$. Secondly, we will present a proof for both of the logics $\mathbf{K4}(Q)$ and $\mathbf{S4}(Q)$, simultaneously. To achieve this goal we will use $L$ to denote these logics, $L_h$ to denote their hierarchical counterparts and $G(L)$ to denote $L$'s sequent calculus as introduced in the Preliminaries.\\

Assume $L \vdash A^t$. Hence, we have $G(L) \vdash \; \Rightarrow A^t$. Therefore, there exists a cut-free proof $\pi$ of $\Rightarrow A^t$ in which all formulas are sub-formulas of $A^t$. Define the set $X$ of usual modal formulas as the least set such that:
\begin{itemize}
\item[$(i)$]
If $B$ is an atom, $B \in X$. Moreover, $\bot, \top \in X$.
\item[$(ii)$]
If $B, C \in X$ then $B \circ C \in X$ for all  $\circ \in \{\wedge, \vee, \rightarrow \}$
\item[$(iii)$]
If $B \in X$ then $\neg B \in X$.
\item[$(iv)$]
If $B \in X$ then:\\
1. $\Box (\bigwedge_{i=0}^{n}q_i \rightarrow B) \in X$ if $n$ is greater than all indices of $q_j$'s occurring in $B$. Formulas defined in this case are called the \textit{first kind} boxed formulas in $X$.\\
2. $\Box (\bigwedge_{i=0}^{m}q_i \wedge \bigwedge_{i=m+1}^{n} \bot \rightarrow B) \in X$ if $m$ is greater than or \textit{equal to} all indices of $q_j$'s occurring in $B$ and $m<n$.  Formulas defined in this case are called the \textit{second kind} boxed formulas in $X$.
\end{itemize} 
It is easy to check that $X$ includes all sub-formulas of $A^t$. The reason is that all boxed sub-formulas of $A^t$ have the form $\Box(\bigwedge_{i=0}^{n}q_i \rightarrow B)$ in which $n$ is greater than all $q_k$'s occurring in $B$. Therefore, every formula in the proof $\pi$ belongs to $X$. To prove the theorem, we need some more ingredients. The first is the notion of $X$-proof. An $X$-proof is a cut-free proof in the system $G(L)$ in which all formulas belong to $X$. For instance, $\pi$ is an $X$-proof. The second notion is the rank of formulas in $X$. If $B \in X$, by $r(B)$ we mean the greatest number $n$ such that $q_n$ occurred in the formula $B$. The third notion is a good $X$-proof. An $X$-proof is called good if for any occurrence of the rules 

\begin{center}
  	\begin{tabular}{c c}
		\AxiomC{$ \Gamma, \Box \Gamma \Rightarrow B$}
		\LeftLabel{\tiny{$\Box_4 R$}}
		\UnaryInfC{$\Box \Gamma \Rightarrow \Box B$}
		\DisplayProof
		&
		\AxiomC{$ \Box \Gamma \Rightarrow B$}
		\LeftLabel{\tiny{$\Box_S R$}}
		\UnaryInfC{$\Box \Gamma \Rightarrow \Box B$}
		\DisplayProof
		\end{tabular}
\end{center}
the rank of $\Box B$ is bigger than or equal to the maximum of the ranks of formulas in $\Box \Gamma$.\\

\textbf{Claim 1.} Let $n$ be a natural number and $\sigma_n$ be the substitution which sends all $q_i$ for $i > n$ to $\bot$. Then if $\alpha$ is a good $X$-proof, $\alpha(\sigma_n)$ is so.\\

First of all, notice that the set $X$ is closed under these kinds of substitutions. The proof is just by structural induction on $X$. The important case is the box case. For the first kind, $B=\Box (\bigwedge_{i=0}^{m}q_i \rightarrow C)$. If $n \geq m$, then $B(\sigma_n)=B$, because there is no $q_i$ in $B$ such that $i>n$. Therefore, $B(\sigma_n) \in X$. If $n<m$, then $B(\sigma_n)=\Box (\bigwedge_{i=0}^{n}q_i \wedge \bigwedge_{i=n+1}^{m} \bot \rightarrow C(\sigma_n))$. By IH, $C(\sigma_n) \in X$. Since $n+1 \leq m$, $B(\sigma_n)$ is of the second kind. We know that $n$ is the biggest number between the indices occurring in $C(\sigma_n)$. The reason is that we substitute all $q$'s with greater index by $\bot$. Hence, $B(\sigma_n) \in X$.\\  
For the second kind, $B=\bigwedge_{i=0}^{m}q_i \wedge \bigwedge_{i=m+1}^{k} \bot \rightarrow B$. If $n \geq m$, $B(\sigma_n)=B$ because there is no $q_i$ in $B$ such that $i>n$. Therefore, $B(\sigma_n) \in X$. If $n<m$, then $B(\sigma_n)=\Box (\bigwedge_{i=0}^{n}q_i \wedge \bigwedge_{i=n+1}^{k} \bot \rightarrow C(\sigma_n))$. Since $n+1 \leq m$, $B(\sigma_n)$ is of the second kind. We know that $n$ is the biggest number between the indices occurring in $C(\sigma_n)$. The reason is that we substitute all $q$'s with greater index by $\bot$. Hence, $B(\sigma_n) \in X$.\\ 

Secondly, it is easy to see that if $B$ is a boxed formula in $X$ then $r(B(\sigma_n))=min\{r(B), n\}$. It is enough to check all four above possibilities. For two of them $B(\sigma_n)=B$ and hence the claim is obvious. For the other two, both ranks are $n$ and $m>n$.\\
We have shown that $X$ is closed under the substitution $\sigma_n$ which means that if $\alpha$ is an $X$-proof then $\alpha(\sigma_n)$ is an $X$-proof as well. We want to show that it is also a good $X$-proof. The reason is that if we apply the rule $\Box R$ in $\alpha$, then since $\alpha$ is a good $X$-proof, $r(\Box B) \geq r(\Box \Gamma)$. After substitution, we have $r(\Box B(\sigma_n))=min\{r(B), n\} \geq min\{r(\Box \Gamma), n\}=r(\Box \Gamma (\sigma_n))$. Therefore, $\alpha(\sigma)$ is also a good $X$-proof. This completes the proof of the Claim 1.\\ 

\textbf{Claim 2.} If $\Gamma \Rightarrow \Delta$ has an $X$-proof, then there exists a set $\Box \Sigma$ of formulas of the second kind in $X$ such that $\Box \Sigma, \Gamma \Rightarrow \Delta$ has a good $X$-proof. \\

To prove this claim, we use induction on the length of the proof of $\Gamma \Rightarrow \Delta$. If the last rule is an axiom, a structural rule or a propositional rule, then the claim is obvious from the IH. For the modal rules:\\

1. If $L=\mathbf{K4}(Q)$ and the last rule is $\Box_4 R$. Then we have $\Box \Gamma \Rightarrow \Box A$ proved by $\Gamma, \Box \Gamma \Rightarrow A$. By IH, there exists $\Box \Sigma \subseteq X$ such that $\Box \Sigma, \Gamma, \Box \Gamma \Rightarrow A$ has a good $X$-proof. Divide $\Gamma$ in two parts, $\Gamma_0$ and $\Gamma_1$ such that $r(\Gamma_0) \leq r(A)$ and $r(\gamma) > r(A)$ for all $\gamma \in \Gamma_1$. We have a good $X$-proof for 
\[
\Box \Sigma, \Gamma_0, \Box \Gamma_0, \Gamma_1, \Box \Gamma_1 \Rightarrow A.
\]
Use $\sigma_{r(A)}$ to substitute all $q_j$'s, $j > r(A)$, by $\bot$. Since there is not any occurrence of these $q_j$'s in $\Gamma_0$ and $A$, they do not change after applying the substitution. Hence we have
\[
\Box \Sigma', \Gamma_0, \Box \Gamma_0, \Gamma_1', \Box \Gamma_1' \Rightarrow A
\]
in which $Y'$ means $Y$ after applying the substitution. By Claim 1 we know that the new proof is also a good $X$-proof. Use left weakening for $\Sigma'$ and then by $\Box_4 R$ we have 
\[
\Box \Sigma', \Box \Gamma_0, \Box \Gamma_1' \Rightarrow \Box A.
\]
Again by right weakening for $\Box \Gamma_1$ we have
\[
\Box \Sigma', \Box \Gamma_0, \Box \Gamma_1', \Box \Gamma_1 \Rightarrow \Box A.
\]
Therefore
\[
\Box \Sigma', \Box \Gamma_1', \Box \Gamma \Rightarrow \Box A
\]
which is what we wanted. Notice that the use of $\Box_4 R$ is now a good one, because $r(A) \geq r(\Box \Sigma', \Box \Gamma_0, \Box \Gamma_1')$. Therefore, the new proof is a good $X$-proof. Moreover, notice that the formulas in $\Box \Sigma' \cup \Box \Gamma'_1$ are of the second kind. The reason is that on the one hand $\Box \Sigma'$ is $(\Box \Sigma)(\sigma_{r(A)})$ and formulas in $\Box \Sigma$ are of the second kind by IH, hence the formulas after substitution should also be of the second kind. On the other hand, for any formula $\gamma \in \Gamma_1$, $r(\gamma)>r(A)$. Therefore, if $\gamma=\bigwedge_{i=0}^{k} r_i \rightarrow \beta$ then $q_{r(\gamma)}$ is among $r_i$'s and after substitution, it changes to $\bot$ which means that $\gamma(\sigma_{r(A)})$ is of the second kind.\\

2. If $L=\mathbf{S4}(Q)$. If the last rule is $\Box_S R$ the proof is similar to the the case $L=\mathbf{K4}(Q)$. If the last rule is $\Box L$ then the proof is straightforward.\\

This completes the proof of the Claim 2.\\

Define a translation $s: X \to \mathcal{L}_{\infty}$ as follows: 
\begin{itemize}
\item[$(i)$]
$B$ is an atom: If $B \notin Q$, $B^s=B$. If $B \in Q$, $B^s=\top$. If $B=\bot$ then $B^s=\bot$ and if $B=\top$ then $B^s=\top$.
\item[$(ii)$]
$(B \circ C)^s=B^s \circ C^s$ for all  $\circ \in \{\wedge, \vee, \rightarrow \}$.
\item[$(iii)$]
$(\neg B)^s=\neg B^s$.
\item[$(iv)$]
1. If $B= \Box (\bigwedge_{i=0}^{n}q_i \rightarrow C)$ then $B^s=\Box_n C^s$.\\
2. If $B= \Box (\bigwedge_{i=0}^{m}q_i \wedge \bigwedge_{i=m+1}^{n} \bot \rightarrow C)$ then $B^s=\top$.
\end{itemize} 
This translation is a left converse for the translation $t$. In fact, for any $A \in \mathcal{L}_{\infty}$, $(A^t)^s=A$. It is a consequence of an easy induction on $A$. We will prove some properties for this translation. First of all, we have to show that the image of any formula in $X$ belongs to $\mathcal{L}_{\infty}$. The proof is by structural induction on $X$. The important case is the first case of the boxed case. If $B$ constructed from the first case of the definition of $X$ then $n$ is greater than all $q$'s indices occurring in $C$. On the other hand, all box $\Box_m$ introduced in $C^s$ are introduced from the first case as well and hence $m$ can not be greater than the biggest index of $q$'s in $C$. Therefore, $n$ is greater than all $m$'s, and by IH we know that $C^s \in \mathcal{L}_{\infty}$, hence $B^s \in \mathcal{L}_{\infty}$.\\
Secondly, we want to show that for all formulas $B \in X$, $r(B^s) \leq r(B)$. The proof is by structural induction on $X$. Again the important case is the first case of the box case. In that case, we have $B=\Box (\bigwedge_{i=0}^{n}q_i \rightarrow C)$ then $B^s=\Box_n C^s$. And hence $r(B)=n$. Moreover, $B^s=\Box_n C^s$. therefore, $r(B^s)=n$ which completes the proof.\\
Thirdly, notice that if $\Box B \in X$ is of the second kind, then $B^s$ is provably equivalent to $\top$ in $L_h$. The reason is that if $\Box B$ is of the second kind, then $B=\bigwedge_{i=0}^{m}q_i \wedge \bigwedge_{i=m+1}^{n} \bot \rightarrow C$ therefore, $B$ has a $\bot$ in its premises, which means that $B^s$ is equivalent to $\top$.\\    

\textbf{Claim 3.} If there is a good $X$-proof for $\Gamma \Rightarrow \Delta$, then $L_h \vdash \bigwedge \Gamma^s \rightarrow \bigvee \Delta^s $.\\

The proof is based on induction on the length of the $X$-proof. If the last rule is an axiom, a structural rule or a propositional rule, then the claim follows from the IH. The reason is that $s$ commutes with the propositional connectives and $L_h$ proves all propositional tautologies. For the modal rules, we have the following two cases:\\

1. If $L=\mathbf{K4}(Q)$ and if the last rule is the modal rule $\Box_4 R$, then by IH, we know $(\Box \Gamma)^s, \Gamma^s \vdash_{\mathbf{K4}_h} A^s $. If $\Box A \in X$ is of the second kind, then by definition $(\Box A)^s=\top$ and therefore there is nothing to prove. If it is of the first kind, then $r((\Box A)^s)=r(A)$. Since the proof is a good $X$-proof, $r(A) \geq r(\Box \Gamma)$. Furthermore, we know that $r((\Box \Gamma)^s) \leq r(\Box \Gamma)$. Therefore, $r((\Box A)^s) \geq r((\Box \Gamma)^s) $.\\
On the other hand, for any $\Box \gamma \in \Box \Gamma \subseteq X$, if the formula is of the second kind, then $\gamma^s$ is equivalent to $\top$ provably in $L$. Moreover, $(\Box \gamma)^s=\top$ by definition. Therefore, we can ignore this kind of boxed formulas in $\Box \Gamma$ and w.l.o.g. we can assume that all formulas in $\Box \Gamma$ are of the first kind. Therefore, $\gamma_i=\bigwedge_{i=0}^{n_i}q_i \rightarrow \beta_i$, hence $\gamma_i^s=\bigwedge_{i=0}^{n_i} \top \rightarrow \beta_i^s$ which means that $\gamma_i^s$ and $\beta_i^s$ are provably equivalent in $\mathbf{K4}_h$. If $A=\bigwedge_{i=0}^{m}q_i \rightarrow B$, by IH we know that $(\Box \Gamma)^s, \Gamma^s \vdash_{\mathbf{K4}_h} A^s$ and therefore, $\Box_{n_i} \beta_i^s, \beta_i^s \vdash_{\mathbf{K4}_h} B^s$. Since $m=r(A) \geq n_i=r(\gamma_i)$ for any $i$, by strong necessitation Theorem \ref{t2-9}, we have 
\[
\Box_{n_i} \beta_i^s  \vdash_{\mathbf{K4}_h} \Box_m(\bigwedge \beta_i^s \rightarrow B^s).
\] 
Hence
\[
\Box_{n_i} \beta_i^s, \Box_m \beta_i^s \vdash_{\mathbf{K4}_h} \Box_m B^s .
\]
Since $n_i \leq m$, we have 
\[
\Box_{n_i} \beta_i^s \vdash_{\mathbf{K4}_h} \Box_m B^s .
\]
Hence
$
(\Box \Gamma)^s \vdash_{\mathbf{K4}_h} (\Box A)^s
$ which completes the proof.\\

2. If $L=\mathbf{S4}(Q)$ and the last rule is $\Box_S R$ the proof is similar to the case 1. Consider the last rule is $\Box L$. Then $\Gamma, \Box A \Rightarrow \Delta$ is proved by $\Gamma, A \Rightarrow \Delta$. By IH, we have $\mathbf{S4}_h \vdash \bigwedge \Gamma^s \wedge A^s \rightarrow \bigvee \Delta^s$. If $\Box A$ is of the second kind, then $A^s$ is equivalent to $\top$ provably in $\mathbf{S4}_h$ and $(\Box A)^s=\top$ by definition. Therefore, there is nothing to prove. If $A$ is of the first kind, then $A=\bigwedge_{i=0}^{m}q_i \rightarrow B$. Then we know that $A^s$ is equivalent to $B^s$ provably in $\mathbf{S4}_h$. Therefore
\[
\mathbf{S4}_h \vdash \bigwedge \Gamma^s \wedge B^s \rightarrow \bigvee \Delta^s.
\]
Since $\mathbf{S4}_h \vdash \Box_m B^s \rightarrow B^s $, we have 
\[
\mathbf{S4}_h \vdash \bigwedge \Gamma^s \wedge \Box_m B^s \rightarrow \bigvee \Delta^s
\]
which is what we wanted. \qed \\

Based on the claims we have proved so far, we can prove the theorem. If $L \vdash A^t$, then we know that there is an $X$-proof of $\Rightarrow A^t$. Then by Claim 2, there is $\Box \Sigma \subseteq X$ such that $\Box \Sigma \Rightarrow A^t$ has a good $X$-proof and every formula in $\Box \Sigma$ is of the second kind. Then by Claim 3, we know that $L_h \vdash \bigwedge (\Box \Sigma)^s \rightarrow (A^t)^s$. We know that $(A^t)^s=A$. On the other hand, since the elements in $\Box \Sigma$ are of the second kind, for any $\sigma \in \Sigma$, $(\Box \sigma)^s=\top$. Therefore, we have $L_h \vdash A$. 
\end{proof}
From now on, if $A \in \mathcal{L}_{\Box}$ is a usual modal formula, and $w$ is a witness for it, by $A(w)$ we mean a formula in $\mathcal{L}_{\infty}$ substituting any occurrence of box with $\Box_n$, when $n$ is a witness for that occurrence. Moreover, by the forgetful translation $f: \mathcal{L}_{\infty} \to \mathcal{L}_{\Box}$ we mean a function which translates atomic formulas and propositional connectives to themselves and sends $\Box_n$ to $\Box$. Notice that there exists some witness $w$ for $A^f$ such that $A^f(w)=A$.
\begin{lem}\label{t6-2}
\begin{itemize}
\item[$(i)$]
For any $A \in \mathcal{L}_{\infty}$ and any natural numbers $m, n>r(A)$, $\mathbf{GL}_h \vdash \Box_{m}A \leftrightarrow \Box_{n}A$.
\item[$(ii)$]
For any $A \in \mathcal{L}_{\Box}$ and any witnesses $u$ and $v$ for $A$, $\mathbf{GL}_h \vdash A(u) \rightarrow A(v)$.
\end{itemize}
\end{lem}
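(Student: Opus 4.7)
The plan is to prove (i) first, since (ii) will follow from (i) by induction on the complexity of $A$. For the upward direction $\Box_m A \to \Box_n A$ with $m \leq n$, I would simply iterate axiom $\mathbf{H}$. For the downward direction, the key single-step fact to establish is $\Box_{k+1} A \to \Box_k A$ for every $k > r(A)$, via a standard L\"{o}b-style argument: from the propositional tautology $A \to (\Box_k A \to A)$, apply the necessitation rule $\mathbf{NC}_h$ at level $k+1$ (legal because $k+1$ exceeds both $r(A)$ and $k$), apply $\mathbf{K}_h$ to obtain $\Box_{k+1} A \to \Box_{k+1}(\Box_k A \to A)$, and then invoke $\mathbf{L}_h$ to conclude $\Box_k A$. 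Iterating this step yields $\Box_n A \to \Box_m A$ whenever $n > m > r(A)$, which together with the upward direction gives (i).

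For part (ii), I would induct on the structure of $A \in \mathcal{L}_\Box$. The atomic case is immediate since $A(u) = A = A(v)$. The Boolean cases reduce to propositional reasoning once one invokes the inductive hypothesis on the subformulas; note that the statement, as formulated, already supplies the hypothesis in both directions (by applying it with $u$ and $v$ swapped), so one really has equivalences on subformulas. The only substantive case is $A = \Box B$ with $u = (n, u_0)$ and $v = (m, v_0)$. Here I would choose some $k$ strictly larger than every index occurring in $u_0$ or $v_0$, so that $k > r(B(u_0))$ and $k > r(B(v_0))$. By IH, $\mathbf{GL}_h \vdash B(u_0) \leftrightarrow B(v_0)$; applying $\mathbf{NC}_h$ at level $k$ and then $\mathbf{K}_h$ produces $\Box_k B(u_0) \to \Box_k B(v_0)$. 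Part (i) then supplies $\Box_n B(u_0) \leftrightarrow \Box_k B(u_0)$ and $\Box_k B(v_0) \leftrightarrow \Box_m B(v_0)$, and composing these three implications gives exactly $A(u) \to A(v)$.

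The main obstacle is the downward step in (i), which is precisely where the L\"{o}b axiom $\mathbf{L}_h$ is indispensable; this is also the reason index-independence above the rank holds in $\mathbf{GL}_h$ while failing in weaker systems such as $\mathbf{K4}_h$ or $\mathbf{KD4}_h$. The remainder of the argument is essentially bookkeeping: at every application of $\mathbf{NC}_h$, $\mathbf{K}_h$, $\mathbf{H}$, and $\mathbf{L}_h$ one must check that the outer index dominates the ranks of the relevant subformulas so that every intermediate formula stays in $\mathcal{L}_\infty$. In the box case of (ii), the single choice of a common $k$ exceeding all indices in both witnesses discharges these side-conditions uniformly.
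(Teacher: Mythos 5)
Your proposal is correct and follows essentially the same route as the paper: the downward step of (i) is obtained by necessitating the tautology $A \rightarrow (\Box_k A \rightarrow A)$ at level $k+1$, applying $\mathbf{K}_h$, and closing with $\mathbf{L}_h$, and the box case of (ii) is handled by lifting the inductive equivalence to a common level $k$ and transferring with (i). The only cosmetic difference is your choice of $k$ (any bound above all witness indices versus the paper's $\max\{m,n\}$), which changes nothing.
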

\begin{proof}
For $(i)$ it is enough to show that if $n>r(A)$,
$
\mathbf{GL}_h \vdash \Box_{n}A \leftrightarrow \Box_{n+1}A
$. The part $\Box_{n}A \rightarrow \Box_{n+1}A$ is an instance of the axiom $\mathbf{H}$ and is provable in $\mathbf{GL}_h$. To prove the other part, use the axiom $\mathbf{L}_h$ for $A$. We have $\mathbf{GL}_h \vdash \Box_{n+1}(\Box_n A \rightarrow A) \rightarrow \Box_n A$. Since
\[
\mathbf{GL}_h \vdash \Box_{n+1} A \rightarrow \Box_{n+1}(\Box_n A \rightarrow A)
\]
we have $\mathbf{GL}_h \vdash \Box_{n+1}A \rightarrow \Box_n A$.\\
For $(ii)$. Use induction on $A$. The atomic and propositional cases are straightforward. For the modal case assume $A=\Box B$. Then we know that $u=(n, u')$ and $v=(m, v')$ such that $n$ is larger than all numbers in $u'$ and $m$ is larger than all numbers in $v'$. Pick $k=max\{m, n\}$. By IH, $\mathbf{GL}_h \vdash B(u') \leftrightarrow B(v')$. Therefore,
$
\mathbf{GL}_h \vdash \Box_k B(u') \leftrightarrow \Box_k B(v').
$
On the other hand by $(i)$ we have
$
\mathbf{GL}_h \vdash \Box_k B(u') \leftrightarrow \Box_n B(u')
$
and
$
\mathbf{GL}_h \vdash \Box_k B(v') \leftrightarrow \Box_m B(v').
$
Hence
$
\mathbf{GL}_h \vdash \Box_n B(u') \leftrightarrow \Box_m B(v').
$
\end{proof}
\begin{thm}\label{t6-3}
\begin{itemize}
\item[$(i)$]
If $\Gamma \vdash_{\mathbf{GL}_h} A$ then $\Gamma^f \vdash_{\mathbf{GL}} A^f$.
\item[$(ii)$]
For any set of modal formulas $\Gamma \cup \{A\} \subseteq \mathcal{L}_{\Box}$ and any witnesses $v$ and $w$ for $\Gamma$ and $A$ respectively, if $\Gamma \vdash_{\mathbf{GL}} A$ then $\Gamma(v) \vdash_{\mathbf{GL}_h} A(w)$.
\end{itemize}
\end{thm}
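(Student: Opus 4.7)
The plan is to prove (i) by a routine induction on the length of the $\mathbf{GL}_h$-derivation, after first reducing to the case $\Gamma = \emptyset$ using the definition of $\vdash_{\mathbf{GL}_h}$. The forgetful map $f$ commutes with the Boolean connectives and sends every $\Box_n$ uniformly to $\Box$, so I will check that each $\mathbf{GL}_h$-axiom maps under $f$ to a $\mathbf{GL}$-theorem: $(\mathbf{H})^f$ becomes $\Box B^f \rightarrow \Box B^f$, a tautology, while $(\mathbf{K}_h)^f$, $(\mathbf{4}_h)^f$ and $(\mathbf{L}_h)^f$ become exactly $\mathbf{K}$, $\mathbf{4}$ and L\"ob. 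Propositional tautologies remain so, modus ponens transfers trivially, and $\mathbf{NC}_h$ maps to the necessitation rule of $\mathbf{GL}$.

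For part (ii) I would again reduce to the case $\Gamma = \emptyset$: the finite subset $\Delta \subseteq \Gamma$ witnessing $\Gamma \vdash_{\mathbf{GL}} A$ yields $\mathbf{GL} \vdash \bigwedge \Delta \rightarrow A$, and the general conclusion then follows by applying the empty-$\Gamma$ statement to the witness for $\bigwedge \Delta \rightarrow A$ assembled compatibly from $v$ and $w$. The main induction would then go on the length of the $\mathbf{GL}$-proof of $A$. The guiding idea is to exhibit, for each $\mathbf{GL}$-axiom, a \emph{canonical} witness $w^*$ for which $A(w^*)$ is literally an instance of the corresponding $\mathbf{GL}_h$-axiom, and then to use Lemma \ref{t6-2}(ii) to pass from $w^*$ to the prescribed $w$ via $\mathbf{GL}_h \vdash A(w^*) \rightarrow A(w)$.

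Concretely, for $\mathbf{K}: \Box(B \rightarrow C) \rightarrow (\Box B \rightarrow \Box C)$, I would take $w^*$ to assign a common index $n$ to all three outer boxes (with $n$ larger than $r(B(u_B))$ and $r(C(u_C))$, for fixed witnesses $u_B, u_C$ used at every occurrence of $B$ and $C$), so that $A(w^*)$ is an instance of $\mathbf{K}_h$. For $\mathbf{4}: \Box B \rightarrow \Box \Box B$, I would use indices $n, n, n+1$ from inside out and a single witness $u_B$ on every copy of $B$, yielding an instance of $\mathbf{4}_h$. For L\"ob, $\Box(\Box B \rightarrow B) \rightarrow \Box B$, the canonical choice is index $n$ for the inner $\Box B$ and for the consequent $\Box$, index $n+1$ for the outer L\"ob-box, and a single witness $u_B$ for every copy of $B$; this is literally $\mathbf{L}_h$. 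Propositional tautologies remain tautologies under any witnessing.

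For the rules, modus ponens transfers directly: applying the induction hypothesis to the premises $B$ and $B \rightarrow A$ with aligned witnesses $u$ and $(u, w)$ respectively gives both $\mathbf{GL}_h \vdash B(u)$ and $\mathbf{GL}_h \vdash B(u) \rightarrow A(w)$, and MP in $\mathbf{GL}_h$ concludes. For necessitation, if $\mathbf{GL} \vdash B$ and $A = \Box B$, then any witness for $A$ has the form $w = (n, u)$ with $n$ strictly greater than every index in $u$; the induction hypothesis gives $\mathbf{GL}_h \vdash B(u)$, the side condition of $\mathbf{NC}_h$ is satisfied, and I conclude $\mathbf{GL}_h \vdash \Box_n B(u) = A(w)$. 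The main delicacy I expect is in the axiom step: different occurrences of a subformula in, say, the $\mathbf{K}$-axiom can, under an arbitrary witness $w$, receive entirely different witness sub-trees with unrelated indices, and Lemma \ref{t6-2}(ii) is precisely the bridge that converts the clean canonical instance into the required one.
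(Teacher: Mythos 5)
Your proposal is correct and follows essentially the same route as the paper: part (i) by checking that $f$ sends each $\mathbf{GL}_h$-axiom to a $\mathbf{GL}$-theorem, and part (ii) by reducing to $\Gamma=\emptyset$, inducting on the $\mathbf{GL}$-proof, realizing each axiom by a canonical witness that yields a literal $\mathbf{GL}_h$-axiom instance and then invoking Lemma \ref{t6-2}(ii), with the same witness-alignment trick for modus ponens and $\mathbf{NC}_h$ for necessitation. The only difference is that you spell out the canonical witnesses explicitly where the paper merely says the axiom case follows from ``the same axiom'' plus Lemma \ref{t6-2}.
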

\begin{proof}
$(i)$ is easy to check. For $(ii)$, first of all notice that it is enough to prove the theorem for $\Gamma=\emptyset$. Secondly, to prove this simpler case, use induction on the length of the proof of $A$. Consider $A$ to be an axiom. It is easy to see that by using the same axiom in $\mathbf{GL}_h$ and using the Lemma \ref{t6-2}, we can prove the theorem. For the modus ponens case if $\mathbf{GL} \vdash A$ and $\mathbf{GL} \vdash A \rightarrow B$, by IH we have
$\mathbf{GL}_h \vdash A(w)$ and $\mathbf{GL}_h \vdash A(u) \rightarrow B(v)$ for all witnesses $w$, $u$ for $A$ and $v$ for $B$. Put $w=u$, therefore for any witness $v$ for $B$ we have $\mathbf{GL}_h \vdash B(v)$. For the necessitation case if $\mathbf{GL} \vdash A$ and $w=(n, u)$ is witness for $\Box A$ then by IH we have $\mathbf{GL}_h \vdash A(u)$, then since $n$ is bigger than all the numbers in $A(u)$, by necessitation in $\mathbf{GL}_h$ we have $\mathbf{GL}_h \vdash \Box_n A(u)$. 
\end{proof}
And as a final word in this section, we will prove that some of the logics we have introduced so far, have the strong disjunction property: 
\begin{dfn}
A modal logic $L$ in the language $\mathcal{L}_{\infty}$ has the strong disjunction property, if for any formula $\Box_n A$ and $\Box_m B$, if $L \vdash \Box_n A \vee \Box_m B$, then $L \vdash A$ or $L \vdash B$.
\end{dfn}
\begin{thm}
Logics $\mathbf{K4}_h$, $\mathbf{KD4}_h$, $\mathbf{S4}_h$ and $\mathbf{GL}_h$ have the strong disjunction property.
\end{thm}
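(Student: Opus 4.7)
The plan is to treat the four logics in two groups, each drawing on a different structural tool already established in the paper. For $\mathbf{K4}_h$, $\mathbf{KD4}_h$ and $\mathbf{S4}_h$ I would exploit cut elimination (Theorem~\ref{t2-8}) together with a direct syntactic analysis of cut-free proofs, while for $\mathbf{GL}_h$ I would transfer the classical disjunction property of $\mathbf{GL}$ through the forgetful translation $f$ introduced before Lemma~\ref{t6-2}.

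Let $L_h$ be any of $\mathbf{K4}_h$, $\mathbf{KD4}_h$, $\mathbf{S4}_h$ and assume $L_h \vdash \Box_n A \vee \Box_m B$. By Theorem~\ref{t2-7} and Theorem~\ref{t2-8}, there is a cut-free proof of $\;\Rightarrow \Box_n A \vee \Box_m B$ in $G(L_h)$. A standard induction on proof length shows that $\vee R$ is invertible in each of these calculi, since no rule other than $\vee R$ can have a principal disjunction on the right; one thus obtains a cut-free proof of $\;\Rightarrow \Box_n A, \Box_m B$. The heart of the argument is then the following lemma: \emph{if $\;\Rightarrow \Delta$ is cut-free provable in $G(L_h)$ and every formula in $\Delta$ is a boxed formula, then some $\Box_k C \in \Delta$ satisfies $L_h \vdash C$}. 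Specializing $\Delta$ to $\{\Box_n A, \Box_m B\}$ is exactly the strong disjunction property.

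This lemma I would prove by induction on proof length. Since the antecedent is empty, no left rule can be the last rule; no propositional right rule applies, because each formula of $\Delta$ has a box as its outermost connective; and $\Box_{D_h} R$ is ruled out since its conclusion has an empty succedent while $\Delta$ is non-empty. The structural cases $wR$ and $cR$ reduce directly to the induction hypothesis, once a short separate induction shows that $\;\Rightarrow$ itself is not cut-free provable in any of the three systems. The critical case is a modal right rule $\Box_{4_h} R$ or $\Box_{S_h} R$: its conclusion has the form $\{\Box_n \sigma_r\}_{r \in R}, \{\Box_{n_i} \gamma_i\}_{i \in I} \Rightarrow \Box_n C$, and matching this against a sequent with empty antecedent forces $R = I = \emptyset$, so the premise collapses to $\;\Rightarrow C$, giving $L_h \vdash C$.

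For $\mathbf{GL}_h$ no cut-free Gentzen system is available here, so I would argue instead through Theorem~\ref{t6-3}. Assume $\mathbf{GL}_h \vdash \Box_n A \vee \Box_m B$. Part $(i)$ of that theorem gives $\mathbf{GL} \vdash \Box A^f \vee \Box B^f$, and the classical disjunction property of $\mathbf{GL}$ then yields $\mathbf{GL} \vdash A^f$ or $\mathbf{GL} \vdash B^f$. As observed just before Lemma~\ref{t6-2}, one can choose witnesses $u$ and $v$ such that $A^f(u) = A$ and $B^f(v) = B$, and part $(ii)$ of Theorem~\ref{t6-3} then transports these back to $\mathbf{GL}_h \vdash A$ or $\mathbf{GL}_h \vdash B$. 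The main obstacle I anticipate is the bookkeeping in the first part, in particular verifying $\vee R$-invertibility carefully for the modal rules and securing the consistency base case; the $\mathbf{GL}_h$ case is essentially immediate given the classical disjunction property of $\mathbf{GL}$.
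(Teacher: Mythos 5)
Your proposal is correct and follows essentially the same route as the paper: for $\mathbf{K4}_h$, $\mathbf{KD4}_h$ and $\mathbf{S4}_h$ the paper likewise reduces to a cut-free proof of $\;\Rightarrow \Box_n A, \Box_m B$ (via a cut with $\Box_n A \vee \Box_m B \Rightarrow \Box_n A, \Box_m B$ rather than $\vee R$-invertibility, an immaterial difference) and then argues that the first non-structural rule from below must be a right modal rule with a singleton succedent, which is exactly your key lemma specialized to $\Delta=\{\Box_n A,\Box_m B\}$. The $\mathbf{GL}_h$ case via Theorem \ref{t6-3} and the disjunction property of $\mathbf{GL}$ is identical to the paper's.
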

\begin{proof}
First we prove the claim for logics $\mathbf{K4}_h$, $\mathbf{KD4}_h$ and $\mathbf{S4}_h$. The proof for all of them are the same and is based on  the usual technique of using cute-free proofs. If we denote the logic by $L$ and its sequent calculus by $G(L)$, then since $L \vdash \Box_n A \vee \Box_m B$, we know that $G(L) \vdash \; \Rightarrow \Box_n A \vee \Box_m B$. Since $G(L) \vdash \Box_n A \vee \Box_m B \Rightarrow \Box_n A, \Box_m B$, therefore by cut we have $G(L) \vdash \; \Rightarrow \Box_n A, \Box_m B$. Pick a cut-free proof of this sequent and call it $\pi$. By Theorem \ref{t2-8} we know that $\pi$ exists. Scan $\pi$ from below and find the first point that the rule is not an structural rule. Call it level $i$. Note that this $i$ exists, because if not, then by just using weakening and contraction we have to reach the axiom. It is easy to see that all valid rules in this situation are right weakening and right contraction, therefore all sequents under $i$ have the form $\Rightarrow \Box_n A, \ldots, \Box_n A, \Box_m B, \ldots, \Box_m B$. Hence we can not reach an axiom. Concludingly, $i$ exists. Moreover, since the form of all sequents under $i$ is $\Rightarrow \Box_n A, \ldots, \Box_n A, \Box_m B, \ldots, \Box_m B$, hence the rule in the $i$-th level should be a right modal rule. For that reason, the right side of the $i$-th level sequent should be a singleton which means that it should be $\Box_n A$ or $\Box_m B$. Therefore, the line above the $i$-th rule is $\Rightarrow A$ or $\Rightarrow B$ which means that we have $L \vdash A$ or $L \vdash B$.\\

For $\mathbf{GL}_h$, if $\mathbf{GL}_h \vdash \Box_n A \vee \Box_m B$, then by Theorem \ref{t6-3} part $(i)$, we have $\mathbf{GL} \vdash \Box A^f \vee \Box B^f$. Since $\mathbf{GL}$ has the strong disjunction property, we have $\mathbf{GL} \vdash A^f$ or $\mathbf{GL} \vdash B^f$. Choose $w_A$ and $w_B$ as witnesses for $A^f$ and $B^f$ such that $A^f(w_A)=A$ and $B^f(w_B)=B$. Then by Theorem \ref{t6-3} part $(ii)$, we have $\mathbf{GL}_h \vdash A^f(w_A)$ or $\mathbf{GL}_h \vdash B^f(w_B)$, which completes the proof.
\end{proof}
\section{The Logic $\mathbf{K4_h}$}
The system $\mathbf{K4}_h$ consists of the modal axioms $\mathbf{K}_h$ and $\mathbf{4}_h$. Let us investigate the intended meaning of these axioms. The axiom $\mathbf{K}_h$ actually means that the theory $T_n$ is closed under modus ponens which is what we expect of any first order arithmetical theory. The axiom $\mathbf{4}_h$ states that if $T_n \vdash A$ then $T_{n+1} \vdash \Pr_n(A)$ which is true for any strong enough theory $T_{n+1}$. Therefore, it is natural to assume that the theorems of $\mathbf{K4}_h$ should be true in all provability models. In other words, $\mathbf{K4}_h$ should be the minimum possible provability logic of hierarchies. In this section we will show this fact by proving that $\mathbf{K4}_h$ is sound and strongly complete with respect to the class of all provability models.
\begin{thm} \label{t3-1} (Soundness)
If $\Gamma \vdash_{\mathbf{K4}_h} A$ then $\mathbf{PrM} \vDash \Gamma \Rightarrow A$.
\end{thm}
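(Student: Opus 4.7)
The strategy is to strengthen the claim: prove by induction on the length of the $\mathbf{K4}_h$-proof that if $\vdash_{\mathbf{K4}_h} B$ (with empty hypothesis set), then for every arithmetical substitution $\sigma$, $I\Sigma_1 \vdash B^{\sigma}$. This suffices for the theorem because, by Definition \ref{t2-5}, $\Gamma \vdash_{\mathbf{K4}_h} A$ gives a finite $\Delta \subseteq \Gamma$ with $\vdash_{\mathbf{K4}_h} \bigwedge \Delta \rightarrow A$; applying the strengthened claim yields $I\Sigma_1 \vdash (\bigwedge \Delta)^{\sigma} \rightarrow A^{\sigma}$, and since $M \vDash I\Sigma_1$ for every provability model by Definition \ref{t1-1}, we conclude $\mathbf{PrM} \vDash \Gamma \Rightarrow A$.

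For the induction, classical tautologies are immediate. For the axiom $\mathbf{K}_h$, the translation $\Pr_{T_n}(A^{\sigma} \rightarrow B^{\sigma}) \rightarrow (\Pr_{T_n}(A^{\sigma}) \rightarrow \Pr_{T_n}(B^{\sigma}))$ is $I\Sigma_1$-provable because closure of $T_n$-provability under modus ponens is a standard formalizable fact. For the axiom $\mathbf{H}$, the translation $\Pr_{T_n}(A^{\sigma}) \rightarrow \Pr_{T_{n+1}}(A^{\sigma})$ is a direct instance of $I\Sigma_1 \vdash \forall x\,(\Pr_{T_n}(x) \rightarrow \Pr_{T_{n+1}}(x))$, which is exactly the content of the clause $I\Sigma_1 \subseteq T_n \subseteq T_{n+1}$ provably in $I\Sigma_1$ from Definition \ref{t1-1}. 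For the axiom $\mathbf{4}_h$, the translation $\Pr_{T_n}(A^{\sigma}) \rightarrow \Pr_{T_{n+1}}(\Pr_{T_n}(A^{\sigma}))$ is an instance of provable $\Sigma_1$-completeness: since $\Pr_{T_n}(A^{\sigma})$ is $\Sigma_1$ and $I\Sigma_1 \subseteq T_{n+1}$, $I\Sigma_1$ itself proves that every $\Sigma_1$ sentence implies its $T_{n+1}$-provability.

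For the inference rules, modus ponens is routine, as $I\Sigma_1$-derivability is closed under it. For the necessitation rule $\mathbf{NC}_h$: assume $\vdash_{\mathbf{K4}_h} A$, so by the induction hypothesis $I\Sigma_1 \vdash A^{\sigma}$; we need $I\Sigma_1 \vdash (\Box_n A)^{\sigma} = \Pr_{T_n}(A^{\sigma})$. Fixing a concrete $I\Sigma_1$-proof of $A^{\sigma}$, whose code is a standard numeral, $I\Sigma_1$ verifies it to obtain $I\Sigma_1 \vdash \Pr_{I\Sigma_1}(A^{\sigma})$, and the formalized inclusion $I\Sigma_1 \subseteq T_n$ then upgrades this to $I\Sigma_1 \vdash \Pr_{T_n}(A^{\sigma})$. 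The side condition that $n$ exceed all indices in $A$ plays no semantic role here; it merely ensures $\Box_n A \in \mathcal{L}_{\infty}$.

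The only step of real depth is the axiom $\mathbf{4}_h$, which rests on (formalized) provable $\Sigma_1$-completeness of $I\Sigma_1$—a standard but nontrivial ingredient. Everything else is a mechanical translation inside $I\Sigma_1$ of the meta-theoretic hypotheses built into the definition of a provability model.
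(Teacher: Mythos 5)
Your proposal is correct and follows essentially the same route as the paper: the same strengthened claim that $I\Sigma_1 \vdash B^{\sigma}$ for every theorem $B$ of $\mathbf{K4}_h$, proved by induction on the Hilbert-style derivation, with $\mathbf{H}$ handled by the provable inclusion of the hierarchy, $\mathbf{4}_h$ by formalized $\Sigma_1$-completeness, and necessitation by (unformalized) $\Sigma_1$-completeness of $I\Sigma_1$, followed by the same passage from the claim to the sequent via $M \vDash I\Sigma_1$. No gaps.
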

\begin{proof}
To prove the soundness, we will show the following claim:\\

\textbf{Claim.} Let $(M, \{T_n\}_{n=0}^{\infty})$ be a provability model. Then if $\mathbf{K4}_h \vdash A$ then for all arithmetical substitutions $\sigma$, $I\Sigma_1 \vdash A^\sigma$.\\

The proof of the claim is by induction on the length of the proof of $A$ in $\mathbf{K4}_h$. If $A$ is a classical tautology then it is obvious that $A^{\sigma}$ is an arithmetical tautology. Therefore, $I\Sigma_1 \vdash A^\sigma$.\\ 

If $A$ is an instance of the axiom $\mathbf{H}$, then for some $B$ and some $n$, we have $A=\Box_{n} B \rightarrow \Box_{n+1} B$. Then $A^{\sigma}=\Pr_{n} (B^{\sigma}) \rightarrow \Pr_{n+1} (B^{\sigma})$. Since the hierarchy $\{T_n\}_{n=0}^{\infty}$ is provably increasing in $I\Sigma_1$, we have $I\Sigma_1 \vdash \Pr_{n} (B^{\sigma}) \rightarrow \Pr_{n+1} (B^{\sigma})$.\\

If $A$ is an instance of the axiom $\mathbf{K}_h$, then there are $B$ and $C$ and some $n$ such that $A=\Box_n (B \rightarrow C) \rightarrow (\Box_n B \rightarrow \Box_n C)$. Therefore, $A^{\sigma}=\Pr_n (B^{\sigma} \rightarrow C^{\sigma}) \rightarrow (\Pr_n (B^{\sigma}) \rightarrow \Pr_n (C^{\sigma}))$. Since the predicate $\Pr_n$ is a provability predicate then the claim holds.\\

If $A$ is an instance of the axiom $\mathbf{4}_h$, then for some $B$ and some $n$ we have $A=\Box_n B \rightarrow \Box_{n+1} \Box_n B$. Therefore, $A^{\sigma}=\Pr_n (B^{\sigma}) \rightarrow \Pr_{n+1} (\Pr_n (B^{\sigma}))$. It is provable in $I\Sigma_1$ since $\Pr_n(\cdot)$ is a $\Sigma_1$ predicate and $I\Sigma_1$ proves the $\Sigma_1$-completeness theorem for $I\Sigma_1$.  Therefore,
\[
I\Sigma_1 \vdash \Pr_n (B^{\sigma}) \rightarrow \Pr (\Pr_n (B^{\sigma}))
\]
in which $\Pr()$ is a provability predicate of $I\Sigma_1$. Since all theories $T_n$ are provably greater than $I\Sigma_1$ we have
\[
I\Sigma_1 \vdash \Pr_n (B^{\sigma}) \rightarrow \Pr_{n+1} (\Pr_n (B^{\sigma})).
\]
If $A$ is a result of the modus ponens rule, then the claim is easy to prove by using IH. And finally, if $A$ is a consequence of the necessitation rule, then there exist $B$ and $n$  such that $A=\Box_n B$. By IH, we have $I\Sigma_1 \vdash B^{\sigma}$. Since $I\Sigma_1 \subseteq T_n$, we have $T_n \vdash B^{\sigma}$. Therefore, by $\Sigma_1$-completeness and the fact that $\Pr_n(B^{\sigma})$ is a $\Sigma_1$ sentence, we have $I\Sigma_1 \vdash \Pr_n(B^{\sigma})$. \qed \\

It is easy to prove the soundness theorem by the claim. If $\Gamma \vdash_{\mathbf{K4}_h} A$, then there exists a finite $\Delta \subseteq \Gamma$ such that $\mathbf{K4}_h \vdash \bigwedge \Delta \rightarrow A $. Then by the claim, for any arithmetical substitution $\sigma$ we have 
\[
I\Sigma_1 \vdash \bigwedge \Delta^{\sigma} \rightarrow A^{\sigma}.
\]
We know that $M \vDash I\Sigma_1$. Therefore, 
\[
M \vDash \bigwedge \Delta^{\sigma} \rightarrow A^{\sigma}
\] 
which means if $M \vDash \Gamma^{\sigma}$, then $M \vDash A^{\sigma}$. 
\end{proof}
For the completeness, we will use the completeness of the translation $t$ in the previous section to reduce the completeness of $\mathbf{K4}_h$ to the completeness of $\mathbf{K4}(Q)$ which is proved in \cite{AK} and mentioned in the Preliminaries.
\begin{thm}\label{t3-2} (Strong Completeness)
If $\mathbf{PrM} \vDash \Gamma \Rightarrow A$, then $\Gamma \vdash_{\mathbf{K4}_h} A$.
\end{thm}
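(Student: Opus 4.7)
My plan is to reduce the completeness of $\mathbf{K4}_h$ to the completeness of $\mathbf{K4}(Q)$ via the translation $t$. Concretely, assuming $\mathbf{PrM} \vDash \Gamma \Rightarrow A$ in the sense of Definition~\ref{t2-3}, I will show that $\mathbf{PrM} \vDash \Gamma^t \Rightarrow A^t$ in the old sense of Definition~\ref{t1-4}. Theorem~\ref{t1-7}(i) will then yield $\Gamma^t \vdash_{\mathbf{K4}(Q)} A^t$, and Theorem~\ref{t2-11}(i) will conclude $\Gamma \vdash_{\mathbf{K4}_h} A$.

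The first step is to exhibit the uniform witnesses demanded by Definition~\ref{t1-4}. For each $B \in \mathcal{L}_\infty$ define canonically a witness $w^B$ for $B^t$ by recursion on $B$: empty on atoms, pair up on propositional connectives, and at a modal step $B = \Box_n C$ witness the fresh outer $\Box$ of $(\Box_n C)^t = \Box(\bigwedge_{i \leq n} q_i \rightarrow C^t)$ by the index $n$, using $w^C$ recursively for $C^t$. Since the formation rule of $\mathcal{L}_\infty$ forces $n > r(C)$, the integer $n$ is strictly greater than every integer occurring in $w^C$, so $w^B$ is a legitimate witness. Let $w^\Gamma$ be the analogous witness-sequence for $\Gamma^t$.

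The heart of the proof is to verify that these canonical witnesses work uniformly. Fix any $(M, \{T_n\}_{n=0}^{\infty}) \in \mathbf{PrM}$ and any arithmetical substitution $\sigma$ on the combined alphabet (original atoms plus $Q$). Form the auxiliary hierarchy $T'_n := T_n \cup \{q_0^\sigma, \ldots, q_n^\sigma\}$. Adding finitely many sentences at each level keeps $T'_n$ r.e., and the inclusions $I\Sigma_1 \subseteq T'_n \subseteq T'_{n+1}$ are inherited from $\{T_n\}$ provably in $I\Sigma_1$, so $(M, \{T'_n\}) \in \mathbf{PrM}$. Writing $\sigma'$ for the restriction of $\sigma$ to the atoms of $\mathcal{L}_\infty$ and using a superscript $T'$ to signal interpretation in the enriched hierarchy, the key claim is
\[
I\Sigma_1 \vdash B^{\sigma', T'} \leftrightarrow (B^t)^\sigma(w^B)
\]
for every $B \in \mathcal{L}_\infty$, proved by induction on $B$. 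The atomic and boolean cases are immediate; for $B = \Box_n C$ the right-hand side is $\Pr_{T_n}\bigl(\bigwedge_{i \leq n} q_i^\sigma \rightarrow (C^t)^\sigma(w^C)\bigr)$ and the left-hand side is $\Pr_{T'_n}(C^{\sigma', T'})$, and these are $I\Sigma_1$-provably equivalent by the induction hypothesis combined with the formalized deduction theorem for the finitely-axiomatized extension $T'_n$ of $T_n$.

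Assembling the pieces, the hypothesis $\mathbf{PrM} \vDash \Gamma \Rightarrow A$ instantiated at $(M, \{T'_n\})$ and $\sigma'$ gives $M \vDash \Gamma^{\sigma', T'}$ implies $M \vDash A^{\sigma', T'}$, and via the claim this is exactly $M \vDash (\Gamma^t)^\sigma(w^\Gamma)$ implies $M \vDash (A^t)^\sigma(w^A)$. Since $(M, \sigma)$ was arbitrary, $\mathbf{PrM} \vDash \Gamma^t \Rightarrow A^t$ in the old sense, closing the reduction. I expect the main technical obstacle to be the modal step of the key equivalence, where one has to justify swapping $\Pr_{T'_n}$ with $\Pr_{T_n}$ applied to a suitable implication at the level of the $\Sigma_1$ provability predicates inside $I\Sigma_1$; this is a standard internal deduction theorem for finite extensions, but requires careful bookkeeping. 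Everything else amounts to routine checks and invocation of the two preceding theorems.
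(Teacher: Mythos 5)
Your proposal is correct and follows essentially the same route as the paper: reduce to $\mathbf{K4}(Q)$ via the canonical witness for $C^t$, observe that $(C^t)^\sigma(w_C)$ over $(M,\{T_n\})$ coincides with $C^\sigma$ over the enriched hierarchy $(M,\{T_n+\bigwedge_{i\leq n}q_i^\sigma\})$, and then invoke Theorems \ref{t1-7}$(i)$ and \ref{t2-11}$(i)$. The only difference is that you spell out the deduction-theorem argument behind the key equivalence, which the paper leaves as ``easy to see.''
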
 
\begin{proof}
We will show that $\Gamma^t \vdash_{\mathbf{K4}(Q)} A^t$. By completeness of $\mathbf{K4}(Q)$ with respect to all provability models, it is enough to prove that $\mathbf{PrM} \vDash \Gamma^t \Rightarrow A^t$. To do that, we will define a canonical witness $w_C$ for $C^t$ when $C \in \mathcal{L}_{\infty}$. Pick $n$ as a witness for some occurrence of a box in $C^t$, if that occurrence is the translation of $\Box_n$ in $C$. Since $C \in \mathcal{L}_{\infty}$, the index of the outer box is bigger than the index of the inner boxes. Therefore, $w_C$ is actually a witness. We want to show that for any provability model $(M, \{T_n\}_{n=0}^{\infty})$ and any arithmetical substitution $\sigma$,
\[ 
M \vDash \Gamma^{\sigma}(w_{\Gamma}) \Rightarrow A^{\sigma}(w_A).
\]
It is easy to see that for any arithmetical substitution $\sigma$, $(C^t)^{\sigma}(w_C)$ in the provability model $(M, \{T_n\}_{n=0}^{\infty})$ is equivalent to $C^{\sigma}$ in the provability model $(M, \{T_n+\bigwedge_{i=0}^{n}q_i^{\sigma}\}_{n=0}^{\infty})$. Since $\Gamma^{\sigma} \Rightarrow A^{\sigma}$ is true in any provability model, hence $(\Gamma^t)^{\sigma}(w_\Gamma) \Rightarrow (A^t)^{\sigma}(w)$ is true in all provability models, as well. Therefore, $\Gamma^t \vdash_{\mathbf{K4}(Q)} A^t$ and by Theorem \ref{t2-11} we have $\Gamma \vdash_{\mathbf{K4}_h} A$.
\end{proof}
\section{The Logic $\mathbf{KD4}_h$}
We know that the axioms $\mathbf{K}_h$ and $\mathbf{4}_h$ are true in all provability models. What about the axiom $\mathbf{D}_h$? It is obvious that the intended meaning of $\neg \Box_n \bot$ is the consistency of the theory $T_n$. But since we have also the proved version, i.e. $\Box_{n+1} \neg \Box_n \bot$, the axiom also implies $T_{n+1} \vdash \Cons(T_n)$. Therefore, it seems that the essence of the axiom $\mathbf{D}_h$ is the following two statements: $\Cons(T_n)$ and $T_{n+1} \vdash \Cons(T_n)$. Notice that we have the $\Sigma_1$-completeness in our theories which means that there is no need to think of $\Box_{n+2}\Box_{n+1} \neg \Box_n \bot$ or more necessitated instances of $\mathbf{D}_h$. Thus, it seems natural to guess that the logic $\mathbf{KD4}_h$ is sound and complete with respect to the class of all consistent provability models. In this section we will prove this fact.
\begin{thm}\label{t4-1}(Soundness)
If $\Gamma \vdash_{\mathbf{KD4}_h} A$, then $\mathbf{Cons} \vDash \Gamma \Rightarrow A$.
\end{thm}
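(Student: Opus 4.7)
The plan is to adapt the strategy of Theorem \ref{t3-1} but with a strengthened inductive claim, since the axiom $\mathbf{D}_h$ is no longer provable in $I\Sigma_1$ after arithmetical interpretation (consistency statements generally are not theorems of $I\Sigma_1$). Concretely, fix a consistent provability model $(M, \{T_n\}_{n=0}^{\infty})$ and an arithmetical substitution $\sigma$. I would prove, by induction on the length of the $\mathbf{KD4}_h$-proof of $A$, the following two simultaneous statements:
\begin{itemize}
\item[(a)] $M \vDash A^{\sigma}$;
\item[(b)] $M \vDash \Pr_{T_m}(A^{\sigma})$ for every $m > r(A)$.
\end{itemize}
Once (a) is established, the soundness conclusion $\mathbf{Cons} \vDash \Gamma \Rightarrow A$ follows exactly as in the last paragraph of the proof of Theorem \ref{t3-1}, by picking a finite $\Delta \subseteq \Gamma$ with $\mathbf{KD4}_h \vdash \bigwedge \Delta \to A$.

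For the inductive steps involving tautologies and the axioms $\mathbf{H}$, $\mathbf{K}_h$, $\mathbf{4}_h$, the argument of Theorem \ref{t3-1} already gives $I\Sigma_1 \vdash A^{\sigma}$, which yields (a) immediately and gives (b) because $I\Sigma_1 \subseteq T_m$ provably in $I\Sigma_1$, so $M$ sees a $T_m$-proof of $A^{\sigma}$. For the new axiom $\mathbf{D}_h$, where $A = \neg \Box_n \bot$ and $A^{\sigma} = \Cons(T_n)$, item (a) is precisely the first clause of the definition of a consistent provability model, while (b) follows from the second clause $M \vDash \Pr_{T_{n+1}}(\Cons(T_n))$, together with $I\Sigma_1 \vdash T_{n+1} \subseteq T_m$ for every $m > n$, which lifts the $T_{n+1}$-proof to a $T_m$-proof inside $M$. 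Modus ponens is handled by IH(a) for (a), and by IH(b) combined with the internal closure of $T_m$ under modus ponens (proved in $I\Sigma_1$) for (b).

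The only genuinely delicate case is necessitation: if $A = \Box_n B$ with $n > r(B)$ and $\mathbf{KD4}_h \vdash B$, then (a) for $A$ asks for $M \vDash \Pr_{T_n}(B^{\sigma})$, which is exactly IH(b) applied to $B$ with the choice $m = n$; and (b) for $A$ asks for $M \vDash \Pr_{T_m}(\Pr_{T_n}(B^{\sigma}))$ for $m > n$, which follows by applying the formalized $\Sigma_1$-completeness theorem of $I\Sigma_1$ to the $\Sigma_1$ sentence $\Pr_{T_n}(B^{\sigma})$ witnessed in $M$ by (a). This is the main obstacle: in the pure $\mathbf{K4}_h$ proof one could afford to conclude $I\Sigma_1 \vdash B^{\sigma}$ and then directly invoke $\Sigma_1$-completeness to obtain provability; here, since $B$ may have used the axiom $\mathbf{D}_h$ internally, one only knows $M \vDash \Pr_{T_n}(B^{\sigma})$, and it is crucial that this weaker statement is itself $\Sigma_1$ so that the formalized $\Sigma_1$-completeness step can still be performed inside $M$ using only that $M \vDash I\Sigma_1 \subseteq T_m$. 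With this strengthened induction in place, the soundness theorem follows in the standard way.
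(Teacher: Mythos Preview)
Your strengthened induction with claims (a) and (b) is the right idea—the paper proves an analogous two-part claim (truth in $M$ together with provability in $T_m$ for all $m$ above the rank). Your treatment of the axiom $\mathbf{D}_h$ and of the necessitation step is correct. However, there is a genuine gap in the modus ponens case for (b).

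Suppose $A$ is obtained by modus ponens from $B$ and $B \to A$, and fix $m > r(A)$. To combine the two provabilities inside $T_m$ you need both $M \vDash \Pr_{T_m}(B^\sigma)$ and $M \vDash \Pr_{T_m}((B\to A)^\sigma)$. But IH(b) for $B$ only yields the former when $m > r(B)$, and nothing forces $r(B) \le r(A)$: modus ponens may eliminate a high-rank premise. For instance take $B = \neg\Box_5\bot$ and $A = \neg\Box_3\bot$; since $A$ is already an axiom, $B \to A$ is provable, and one can obtain $A$ by modus ponens from $B$ and $B\to A$ with $r(B)=5 > 3 = r(A)$. For $m=4$ your inductive step does not go through, and there is no way to push $\Pr_{T_k}(A^\sigma)$ down from $k>5$ to $k=4$.

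This is exactly why the paper argues by induction on a \emph{cut-free} proof in $G(\mathbf{KD4}_h)$, invoking Theorem~\ref{t2-8}, rather than on a Hilbert-style derivation. In a cut-free sequent proof every rule has the property that the rank of the premise is bounded by the rank of the conclusion, so the parameter $m > r(\Gamma\cup\Delta)$ can be held fixed as one climbs the proof tree; the cut rule—the sequent analogue of modus ponens—is precisely what would violate this, and it has been eliminated. Your Hilbert-style argument can be repaired, but only by first establishing that every $\mathbf{KD4}_h$-theorem $A$ admits a derivation in which all formulas have rank at most $r(A)$, and that amounts to cut elimination again.
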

\begin{proof}
To prove the theorem we need the following claim:\\

\textbf{Claim.} Let $(M, \{T_n\}_{n=0}^{\infty})$ be a consistent provability model, then if $G(\mathbf{KD4}_h) \vdash \Gamma \Rightarrow \Delta$ then for any arithmetical substitution $\sigma$, and any $n > r(\Gamma \cup \Delta)$, $M \vDash \Gamma^{\sigma} \Rightarrow \Delta^{\sigma}$ and also $M$ thinks that $T_n \vdash \Gamma^{\sigma} \Rightarrow \Delta^{\sigma}$.\\

The proof of the claim is by induction on the length of the cut-free proof of $\Gamma \Rightarrow \Delta$ in $G(\mathbf{KD4}_h)$. The axiom cases, the structural cases and the propositional cases are easy to prove. We will check just the modal rules.\\

1. If the sequent $\{\Box_n \alpha_r\}_{r \in R}, \{ \Box_{n_i} \gamma_i\}_{i \in I} \Rightarrow \Box_n A$ is proved by 
\[
\{\alpha_r\}_{r \in R}, \{ \gamma_i, \Box_{n_i} \gamma_i\}_{i \in I} \Rightarrow A
\]
and if $m>r(\{\Box_n \alpha_r\}_{r \in R}, \{ \Box_{n_i} \gamma_i\}_{i \in I} \cup \Box_n A)$, we have the following: Since $n>n_i$, we know that $n>r(\{\alpha_r\}_{r \in R}, \{ \gamma_i, \Box_{n_i} \gamma_i\}_{i \in I} \cup \{A\})$. By IH, $M$ thinks
\[
T_n \vdash \bigwedge \{\alpha_r^{\sigma}\}_{r \in R}, \{\gamma_i^{\sigma}, \Pr_{n_i} (\gamma_i^{\sigma})\}_{i \in I} \rightarrow A^{\sigma}.
\]
On the other hand, the following argument is formalizable in $I\Sigma_1$:
If 
\[
T_n \vdash \bigwedge \{\alpha_r^{\sigma}\}_{r \in R}, \{\gamma_i^{\sigma},  \Pr_{n_i} (\gamma_i^{\sigma})\}_{i \in I} \rightarrow A^{\sigma},
\]
then we have
\[
I\Sigma_1 \vdash \Pr_n(\bigwedge \alpha_r^{\sigma}) \wedge \Pr_n(\bigwedge \gamma_i^{\sigma}) \wedge \Pr_n(\bigwedge \Pr_{n_i} (\gamma_i^{\sigma})) \rightarrow \Pr_n(A^{\sigma}).
\]
Hence,
\[
I\Sigma_1 \vdash \bigwedge \Pr_n(\alpha_r^{\sigma}) \wedge \bigwedge \Pr_{n_i} (\gamma_i^{\sigma}) \rightarrow \Pr_n(A^{\sigma}) .
\]
And therefore,
\[
T_{m} \vdash \bigwedge \Pr_n(\alpha_r^{\sigma}) \wedge \bigwedge \Pr_{n_i} (\gamma_i^{\sigma}) \rightarrow \Pr_n(A^{\sigma}).
\]
The reason is that the first and the second line of the argument are easy consequences of the formalized $\Sigma_1$-completeness in $I\Sigma_1$ and provability of the fact that the provability predicate and conjunction commute and also the fact that $n_i<n$. The third line is a consequence of the fact that $I\Sigma_1 \subseteq T_m$ is formalizable in $I\Sigma_1$. Therefore, the argument is true in $M$ and hence $M$ thinks
\[
T_{m} \vdash \bigwedge \Pr_n(\alpha_r^{\sigma}) \wedge \bigwedge \Pr_{n_i} (\gamma_i^{\sigma}) \rightarrow \Pr_n(A^{\sigma})
\]
which is what we wanted. For the truth of 
\[
\bigwedge \Pr_n(\alpha_r^{\sigma}) \wedge \bigwedge \Pr_{n_i} (\gamma_i^{\sigma}) \rightarrow \Pr_n(A^{\sigma})
\]
notice that the following argument is provable in $I\Sigma_1$:
If 
\[
T_n \vdash  \bigwedge \{\alpha_r^{\sigma}\}_{r \in R}, \{\gamma_i^{\sigma},  \Pr_{n_i} (\gamma_i^{\sigma})\}_{i \in I} \rightarrow A^{\sigma}
\]
then
\[
\Pr_n(\bigwedge \alpha_r^{\sigma}) \wedge \Pr_n(\bigwedge \gamma_i^{\sigma}) \wedge \Pr_n(\bigwedge \Pr_{n_i} \gamma_i^{\sigma}) \rightarrow \Pr_n(A^{\sigma})
\]
and then
\[
\bigwedge \Pr_n(\alpha_r^{\sigma}) \wedge \bigwedge \Pr_{n_i} (\gamma_i^{\sigma}) \rightarrow \Pr_n(A^{\sigma}).
\]
Therefore, $M$ thinks that the argument is true and hence
\[
M \vDash \bigwedge \Pr_n(\alpha_r^{\sigma}) \wedge \bigwedge \Pr_{n_i} (\gamma_i^{\sigma}) \rightarrow \Pr_n(A^{\sigma}).
\]

2. For the case that the right side of the sequent is empty, it is enough to put $\bot$ for $A$ in the above proof and then we have
\[
T_{m} \vdash \bigwedge \Pr_n(\alpha_r^{\sigma}) \wedge \bigwedge \Pr_{n_i} (\gamma_i^{\sigma}) \rightarrow \Pr_n(\bot).
\]
Since $m>n$, and the provability model is consistent, we know that $M$ thinks
$
T_{m} \vdash \Cons(T_n).
$
Therefore, $M$ thinks
\[
T_m \vdash \bigwedge \Pr_n(\alpha_r^{\sigma}) \wedge \bigwedge \Pr_{n_i} (\gamma_i^{\sigma}) \rightarrow \bot
\]
which is what we wanted. For the truth part, do the same thing for the truth part above:
\[
M \vDash \bigwedge \Pr_n(\alpha_r^{\sigma}) \wedge \bigwedge \Pr_{n_i} (\gamma_i^{\sigma}) \rightarrow \Pr_n(\bot).
\]
Since the provability model is consistent, we have
$
M \vDash \Cons(T_n).
$
Hence
\[
M \vDash \bigwedge \Pr_n(\alpha_r^{\sigma}) \wedge \bigwedge \Pr_{n_i} (\gamma_i^{\sigma}) \rightarrow \bot.
\]
This completes the proof of the claim.\\

By the claim, it is easy to prove the soundness theorem. If $\Gamma \vdash_{\mathbf{KD4}_h} A$, then it is obvious by the definition that there exists a finite $\Delta \subseteq \Gamma$ such that $\Delta \vdash_{\mathbf{KD4}_h} A$. By Theorem \ref{t2-8}, there is a cut free proof of $\Delta \Rightarrow A$ in $G(\mathbf{KD4}_h)$. Hence by the claim we know that for any consistent provability model $(M, \{T_n\}_{n=0}^{\infty})$ and any arithmetical substitution $\sigma$,
$
M \vDash \Delta^{\sigma} \Rightarrow A^{\sigma}.
$
And finally since $\Delta \subseteq \Gamma$, we have
$
M \vDash \Gamma^{\sigma} \Rightarrow A^{\sigma}
$
which completes the proof.
\end{proof}
The completeness is an easy consequence of the completeness of $\mathbf{K4}_h$. 
\begin{thm}\label{t4-2}(Strong Completeness)
If $\mathbf{Cons} \vDash \Gamma \Rightarrow A$, then $\Gamma \vdash_{\mathbf{KD4}_h} A$.
\end{thm}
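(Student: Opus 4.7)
The plan is to reduce the completeness for $\mathbf{KD4}_h$ over $\mathbf{Cons}$ to the already established completeness for $\mathbf{K4}_h$ over $\mathbf{PrM}$ (Theorem \ref{t3-2}), by internalising the axiom scheme $\mathbf{D}_h$ and its necessitations as hypotheses. Concretely, let $T = \{B \in \mathcal{L}_{\infty} : \mathbf{KD4}_h \vdash B\}$ denote the set of closed $\mathbf{KD4}_h$-theorems. I will aim to show the chain
\[
\mathbf{Cons} \vDash \Gamma \Rightarrow A \;\Longrightarrow\; \mathbf{PrM} \vDash \Gamma \cup T \Rightarrow A \;\Longrightarrow\; \Gamma \cup T \vdash_{\mathbf{K4}_h} A \;\Longrightarrow\; \Gamma \vdash_{\mathbf{KD4}_h} A,
\]
where the middle step is just Theorem \ref{t3-2} applied to the enlarged premise set.

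First I would verify the key semantic step: any provability model $(M,\{T_n\})$ and substitution $\sigma$ which satisfies $T^{\sigma}$ is automatically a consistent provability model. This is essentially because the relevant witnesses of consistency are themselves $\mathbf{KD4}_h$-theorems with no propositional variables. Indeed, $\neg \Box_n \bot \in T$ gives $M \vDash \neg \Pr_{T_n}(\bot)$, i.e., $M \vDash \Cons(T_n)$; and since $r(\neg \Box_n \bot) = n < n+1$, necessitation yields $\Box_{n+1}\neg \Box_n \bot \in T$, so $M \vDash \Pr_{T_{n+1}}(\Cons(T_n))$. These are precisely the two clauses of Definition \ref{t1-6}(ii). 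Therefore, given $\mathbf{Cons} \vDash \Gamma \Rightarrow A$, any $(M,\{T_n\}) \in \mathbf{PrM}$ and $\sigma$ with $M \vDash (\Gamma \cup T)^{\sigma}$ lives in $\mathbf{Cons}$ and satisfies $\Gamma^{\sigma}$, hence satisfies $A^{\sigma}$; this establishes $\mathbf{PrM} \vDash \Gamma \cup T \Rightarrow A$.

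Next, strong completeness of $\mathbf{K4}_h$ (Theorem \ref{t3-2}) yields $\Gamma \cup T \vdash_{\mathbf{K4}_h} A$. Unfolding the definition of the consequence relation, there exist finite $F_1 \subseteq \Gamma$ and $F_2 \subseteq T$ such that $\mathbf{K4}_h \vdash \bigwedge F_1 \wedge \bigwedge F_2 \rightarrow A$. Since $\mathbf{K4}_h \subseteq \mathbf{KD4}_h$ and every element of $F_2$ is a $\mathbf{KD4}_h$-theorem, a single application of modus ponens in $\mathbf{KD4}_h$ discharges $F_2$ and delivers $\mathbf{KD4}_h \vdash \bigwedge F_1 \rightarrow A$, i.e., $\Gamma \vdash_{\mathbf{KD4}_h} A$ as desired.

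The argument is essentially bookkeeping, so no step is genuinely hard; the only point that warrants care is the implicit claim that $T$ is strong enough to force consistency of the model. It is not quite enough to add only $\{\neg \Box_n \bot : n\}$ as hypotheses, because then one could not recover the ``provably consistent'' clause $M \vDash \Pr_{T_{n+1}}(\Cons(T_n))$ from Definition \ref{t1-6}(ii). Taking the full theory $T$ is the cleanest way to obtain these necessitated instances automatically, and it is what makes the circularity-free reduction to $\mathbf{K4}_h$ possible — we never invoke soundness of $\mathbf{KD4}_h$ to get a $\mathbf{KD4}_h$-theorem into $M$, only the definition of $T$.
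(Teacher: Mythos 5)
Your proposal is correct and follows essentially the same route as the paper: the paper augments $\Gamma$ with exactly the instances of $\neg\Box_n\bot$ and $\Box_{n+1}\neg\Box_n\bot$ (rather than the full theorem set $T$), observes that any provability model of these is consistent, applies the strong completeness of $\mathbf{K4}_h$, and discharges the added hypotheses since $\mathbf{KD4}_h$ proves them. Your use of all of $T$ is a harmless superset of the paper's $\Delta$, and your care about including the necessitated instances $\Box_{n+1}\neg\Box_n\bot$ is precisely the point the paper's choice of $\Delta$ addresses.
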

\begin{proof}
Let $\Delta$ be the set of all instances of the schemes $\neg \Box_n \bot$ and $\Box_{n+1} \neg \Box_n \bot$. Then it is easy to see that $\mathbf{PrM} \vDash \Gamma + \Delta \Rightarrow A$. The reason is that if a provability model $(M, \{T_n\}_{n=0}^{\infty})$ is a model of $\Delta$, then it should be a consistent provability model. Therefore, by strong completeness for $\mathbf{K4}_h$, we have $\Gamma+ \Delta \vdash_{\mathbf{K4}_h} A $. Since $\mathbf{KD4}_h$ proves all formulas in $\Delta$, we have $\Gamma \vdash_{\mathbf{KD4}_h} A$. 
\end{proof}
\section{The Logic $\mathbf{S4}_h$}
What is the intended meaning of the axiom $\mathbf{T}_h$? It is easy to see that $\Box_n A \rightarrow A$ means that $T_n$ is sound. But we also have $\Box_{n+1} (\Box_n A \rightarrow A)$ which means that the soundness of $T_n$ should be provable in $T_{n+1}$. Similar to the case of $\mathbf{KD4}_h$, there is no need to worry about more applications of necessitation. Therefore, it seems that the natural canonical models for $\mathbf{S4}_h$ are the reflexive provability models. In this section we will show that $\mathbf{S4}_h$ is sound and strongly complete with respect to the class of all reflexive provability models.
\begin{thm}\label{t5-1}(Soundness)
If $\Gamma \vdash_{\mathbf{S4}_h} A$, then $\mathbf{Ref} \vDash \Gamma \Rightarrow A$.
\end{thm}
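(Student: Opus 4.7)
The plan is to imitate the soundness proof for $\mathbf{KD4}_h$ (Theorem \ref{t4-1}), exploiting cut-elimination (Theorem \ref{t2-8}) to reduce to induction on cut-free derivations in $G(\mathbf{S4}_h)$. Specifically, I would prove the following strengthened claim: for any reflexive provability model $(M, \{T_n\}_{n=0}^{\infty})$, if $G(\mathbf{S4}_h) \vdash \Gamma \Rightarrow \Delta$, then for every arithmetical substitution $\sigma$ and every $m > r(\Gamma \cup \Delta)$, both $M \vDash \bigwedge \Gamma^{\sigma} \rightarrow \bigvee \Delta^{\sigma}$ and $M$ thinks $T_m \vdash \bigwedge \Gamma^{\sigma} \rightarrow \bigvee \Delta^{\sigma}$. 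The second conjunct is not needed for the final statement but is indispensable to make the induction on modal rules go through. Given this claim, soundness follows immediately: from $\Gamma \vdash_{\mathbf{S4}_h} A$ extract a finite $\Delta \subseteq \Gamma$ with $G(\mathbf{S4}_h) \vdash \Delta \Rightarrow A$, then pick any $m > r(\Delta \cup \{A\})$ and apply the truth part of the claim.

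The axiom, structural, and propositional cases of the induction are routine and identical to the $\mathbf{KD4}_h$ argument. The $\Box_{S_h} R$ case is actually simpler than the $\Box_{4_h} R$ case of Theorem \ref{t4-1}, because the premise of $\Box_{S_h} R$ carries only the boxed formulas $\Box_{n_i} \gamma_i$ and not their unboxed companions $\gamma_i$. Applying the induction hypothesis at level $n$ (which is legal because all $n_i, r(\sigma_r), r(A) < n$) gives $M \vDash T_n \vdash \bigwedge \sigma_r^{\sigma} \wedge \bigwedge \Pr_{n_i}(\gamma_i^{\sigma}) \rightarrow A^{\sigma}$; the usual $I\Sigma_1$-provable argument (applying $\Pr_n$, distributing over conjunction, and using formalized $\Sigma_1$-completeness to cancel the redundant $\Pr_n(\Pr_{n_i}(\gamma_i^{\sigma}))$ layer against $\Pr_{n_i}(\gamma_i^{\sigma})$) yields the desired implication, both truly in $M$ and provably in $T_m$ for any $m > n$.

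The genuinely new case is the $\Box_h L$ rule, which is where reflexivity is used. Suppose $\Gamma, \Box_n A \Rightarrow \Delta$ follows from $\Gamma, A \Rightarrow \Delta$. Since $r(A) < r(\Box_n A) = n$, every $m > r(\Gamma \cup \{\Box_n A\} \cup \Delta)$ also majorizes the rank of the premise, so by IH both $M \vDash \bigwedge \Gamma^{\sigma} \wedge A^{\sigma} \rightarrow \bigvee \Delta^{\sigma}$ and $M$ thinks $T_m \vdash \bigwedge \Gamma^{\sigma} \wedge A^{\sigma} \rightarrow \bigvee \Delta^{\sigma}$. By reflexivity of $(M, \{T_n\}_{n=0}^{\infty})$, $M \vDash \Pr_{T_n}(A^{\sigma}) \rightarrow A^{\sigma}$, which composes with the truth part to yield the truth part of the conclusion. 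For the $T_m$-provability part, reflexivity also gives $M \vDash \Pr_{T_{n+1}}(\Pr_{T_n}(A^{\sigma}) \rightarrow A^{\sigma})$; since $m \geq n+1$ and the inclusion $T_{n+1} \subseteq T_m$ is provable in $I\Sigma_1$, $M$ thinks $T_m$ proves the reflection instance, and composing with the IH closes the case.

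The main obstacle is bookkeeping of ranks: one has to verify at each modal step that the bound $m > r(\text{conclusion})$ chosen for the conclusion is large enough to supply a legitimate $m'$ (either $m' = n$ for $\Box_{S_h} R$ or any $m' > r(\text{premise})$ for $\Box_h L$) when invoking the induction hypothesis, and simultaneously large enough to activate the reflection instance available only at levels strictly above $n$. The condition $m > r(\Gamma \cup \Delta)$ in the claim is precisely what makes these two requirements compatible, so nothing more than careful inequality tracking is needed; this is exactly the role played by the analogous rank condition in the soundness proof of $\mathbf{KD4}_h$.
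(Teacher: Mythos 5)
Your proposal is correct and follows essentially the same route as the paper: cut-elimination for $G(\mathbf{S4}_h)$, then induction on cut-free proofs with a rank-indexed claim, using reflexivity precisely in the $\Box_h L$ case. The only (harmless) difference is that you carry the truth conjunct through the induction alongside the $T_m$-provability conjunct, whereas the paper's claim keeps only the provability part and recovers truth at the very end by one application of $M \vDash \Pr_n(\phi) \rightarrow \phi$.
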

\begin{proof}
To prove the theorem, we will prove the following claim.\\

\textbf{Claim.} Let $(M, \{T_n\}_{n=0}^{\infty})$ be a reflexive provability model, then if $G(\mathbf{S4}_h) \vdash \Gamma \Rightarrow \Delta$ then for any arithmetical substitution $\sigma$, and any $n > r(\Gamma \cup \Delta)$, $M$ thinks that $T_n \vdash \Gamma^{\sigma} \Rightarrow \Delta^{\sigma}$.\\

The proof of the claim is by induction on the length of the cut-free proof of $\Gamma \Rightarrow \Delta$ in $G(\mathbf{S4}_h)$. The axiom cases, the structural cases and the propositional cases are easy to prove. We will check just the modal rules.\\

1. If the sequent $\Gamma, \Box_n A \Rightarrow \Delta$ is proved by $\Gamma, A \Rightarrow \Delta$ and $m>r(\Gamma, \Box_n A \cup \Delta)$, then $m>r(\Gamma, A \cup \Delta)$. Therefore by IH, we know that $M$ thinks that
\[
T_m \vdash \Gamma^{\sigma}, A^{\sigma} \Rightarrow \Delta^{\sigma} .
\]
On the other hand, the following fact is formalizable in $I\Sigma_1$:
If 
\[
T_m \vdash \Gamma^{\sigma}, A^{\sigma} \Rightarrow \Delta^{\sigma} 
\] 
and
\[
T_{m} \vdash \Pr_n(A^{\sigma}) \rightarrow A^{\sigma}
\]
then 
\[
T_{m} \vdash\Gamma^{\sigma}, \Pr_n(A^{\sigma}) \Rightarrow \Delta^{\sigma}. 
\]
Therefore, the argument is true in $M \vDash I\Sigma_1$. Since $m>n$ and the provability model is reflexive, both of the assumptions are true in $M$. Hence, the conclusion is true in $M$.\\

2. If the sequent $\{\Box_n \alpha_r\}_{r \in R}, \{ \Box_{n_i} \gamma_i\}_{i \in I} \Rightarrow \Box_n A$ is proved by 
\[
\{\alpha_r\}_{r \in R}, \{ \Box_{n_i} \gamma_i\}_{i \in I} \Rightarrow A
\]
and if $m>r(\{\Box_n \alpha_r\}_{r \in R}, \{ \Box_{n_i} \gamma_i\}_{i \in I} \cup \Box_n A)$ we have the following: Since $n>n_i$, we know that $n>r(\{\alpha_r\}_{r \in R}, \{ \Box_{n_i} \gamma_i\}_{i \in I} \cup \{A\})$. By IH, $M$ thinks
\[
T_n \vdash  \bigwedge \{\alpha_r^{\sigma}\}_{r \in R}, \{ \Pr_{n_i} (\gamma_i^{\sigma})\}_{i \in I} \rightarrow A^{\sigma}.
\]
On the other hand, the following argument is formalizable in $I\Sigma_1$:
If 
\[
T_n \vdash \bigwedge \{\alpha_r^{\sigma}\}_{r \in R}, \{ \Pr_{n_i} (\gamma_i^{\sigma})\}_{i \in I} \rightarrow A^{\sigma}
\]
then
\[
T_{m} \vdash \bigwedge \Pr_n(\alpha_r^{\sigma} \wedge \bigwedge \Pr_{n_i} (\gamma_i^{\sigma})) \rightarrow \Pr_n(A^{\sigma})
\]
and hence
\[
T_{m} \vdash \bigwedge \Pr_n(\alpha_r^{\sigma}) \wedge \bigwedge \Pr_{n_i} (\gamma_i^{\sigma}) \rightarrow \Pr_n(A^{\sigma}).
\]
The reason is that the first and the second line of the argument are easy consequences of the formalized $\Sigma_1$-completeness in $I\Sigma_1$ and the fact that $\Pr_n$ commutes with conjunction provably in $I\Sigma_1$. Therefore, the argument is true in $M$ and hence $M$ thinks
\[
T_{m} \vdash \bigwedge \Pr_n(\alpha_r^{\sigma}) \wedge \bigwedge \Pr_{n_i} (\gamma_i^{\sigma}) \rightarrow \Pr_n(A^{\sigma})
\]
which is what we wanted.\\

By the claim, it is easy to prove the soundness theorem. If $\Gamma \vdash_{\mathbf{S4}_h} A$, then it is obvious by the definition that there exists a finite $\Delta \subseteq \Gamma$ such that $\Delta \vdash_{\mathbf{S4}_h} A$. By Theorem \ref{t2-8}, there is a cut free proof of $\Delta \Rightarrow A$ in $G(\mathbf{S4}_h)$. Hence if we choose a natural number $n$ bigger than $r(\Delta \cup \{A\})$ then by the claim we know that for any reflexive provability model $(M, \{T_n\}_{n=0}^{\infty})$ and any arithmetical substitution $\sigma$, $M$ thinks that
$
T_n \vdash \Delta^{\sigma} \Rightarrow A^{\sigma}
$
which means
$
M \vDash \Pr_n(\bigwedge \Delta^{\sigma} \rightarrow A^{\sigma})
$.
But the provability model is a reflexive model, therefore,
$M \vDash \Pr_n(\phi) \rightarrow \phi$, hence
$
M \vDash \Delta^{\sigma} \Rightarrow A^{\sigma}
$.
Finally since $\Delta \subseteq \Gamma$,
$
M \vDash \Gamma^{\sigma} \Rightarrow A^{\sigma}
$
which completes the proof.
\end{proof}
To prove the completeness, similar to the case $\mathbf{K4}_h$, we will reduce the completeness of $\mathbf{S4}_h$ to the completeness of $\mathbf{S4}(Q)$ which is proved in \cite{AK} and also mentioned in the Preliminaries.
\begin{thm}\label{t5-2}(Strong Completeness)
If $\mathbf{Ref} \vDash \Gamma \Rightarrow A$, then $\Gamma \vdash_{\mathbf{S4}_h} A$.
\end{thm}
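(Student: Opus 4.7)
The plan is to reduce completeness of $\mathbf{S4}_h$ to the completeness of $\mathbf{S4}(Q)$ along the lines of Theorem \ref{t3-2}, using the translation $t$ together with Theorem \ref{t2-11}(ii). However, the direct analog of that argument fails in the reflexive setting: it is not the case that $\Gamma \vdash_{\mathbf{S4}_h} A$ forces $\Gamma^t \vdash_{\mathbf{S4}(Q)} A^t$. For example, $\Box_1 \Box_0 p \to \Box_0 p$ is a theorem of $\mathbf{S4}_h$ by $\mathbf{T}_h$, yet its $t$-translate $\Box(q_0 \wedge q_1 \to \Box(q_0 \to p)) \to \Box(q_0 \to p)$ is refutable in a reflexive, transitive two-world Kripke frame in which $q_1$ is false everywhere. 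To sidestep this I will pass through the stronger sequent $\Gamma^t,\, q_0,\, q_1,\, \ldots,\, q_N \vdash_{\mathbf{S4}(Q)} A^t$, where $N = \max(r(\Gamma),r(A))$; the added atomic hypotheses are exactly what guarantees reflexivity of the modified arithmetical hierarchy, and they collapse to $\top$ under the translation $s$ used inside the proof of Theorem \ref{t2-11}(ii), so they do not pollute the final hierarchical conclusion.

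For the semantic step, let $w_\Gamma$ and $w_A$ be the canonical witnesses from the proof of Theorem \ref{t3-2}. Fix a reflexive provability model $(M,\{T_n\})$ and a substitution $\sigma$, and set $T_n^{\ast} = T_n + \bigwedge_{i \le n} q_i^\sigma$. If $M \not\vDash \bigwedge_{i \le N} q_i^\sigma$ then the antecedent of the sequent to be verified is already false in $M$, and the implication is vacuous. Otherwise $(M,\{T_n^{\ast}\})$ is itself a reflexive provability model: $M$-soundness of $T_n^{\ast}$ follows from $M$-soundness of $T_n$ and $M \vDash \bigwedge q_i^\sigma$ via the formalized equivalence $\Pr_{T_n^{\ast}}(\phi) \leftrightarrow \Pr_{T_n}(\bigwedge q_i^\sigma \to \phi)$, and $T_{n+1}^{\ast}$-provability of the reflection of $T_n^{\ast}$ follows from the corresponding scheme for $T_n$ inside $T_{n+1}$ together with a brief propositional manipulation. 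As in Theorem \ref{t3-2}, $(C^t)^\sigma(w_C)$ in $(M,\{T_n\})$ is arithmetically equivalent to $C^\sigma$ in $(M,\{T_n^{\ast}\})$ for every $C \in \mathcal{L}_\infty$, so applying the hypothesis $\mathbf{Ref} \vDash \Gamma \Rightarrow A$ inside the modified model yields the required conclusion in $M$. The second part of Theorem \ref{t1-7}(iii) then delivers $\Gamma^t \cup \{q_0,\ldots,q_N\} \vdash_{\mathbf{S4}(Q)} A^t$.

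To finish, pass to a finite $\Delta \subseteq \Gamma$ with $\mathbf{S4}(Q) \vdash \bigwedge \Delta^t \wedge \bigwedge_{i \le N} q_i \to A^t$, and re-run the proof of Theorem \ref{t2-11}(ii) on this formula in place of the pure $t$-translate $A^t$. Every subformula still belongs to the set $X$ constructed there---the $q_i$ are atomic, the boxed subformulas coming from $\Delta^t$ and $A^t$ are first-kind formulas produced by $t$, and $X$ is closed under propositional connectives---so the cut-free proof is an $X$-proof, Claim 2 upgrades it to a good $X$-proof with extra second-kind hypotheses $\Box \Sigma$, and Claim 3 combined with the translation $s$ gives $\mathbf{S4}_h \vdash \bigwedge s(\Box \Sigma) \wedge \bigwedge \Delta \wedge \top \to A$. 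Since $s$ maps every second-kind boxed formula and every $q_i$ to $\top$, this collapses to $\mathbf{S4}_h \vdash \bigwedge \Delta \to A$, whence $\Gamma \vdash_{\mathbf{S4}_h} A$. I expect the principal obstacle to be verifying that this re-execution of Theorem \ref{t2-11}(ii) really does transfer to non-pure $t$-translates---in particular that the rank bookkeeping of Claim 1 and the case split in Claim 2 continue to function when atomic $q_i$ appear outside the scope of every box---but the required properties are precisely the closure conditions on $X$ that the original proof already establishes.
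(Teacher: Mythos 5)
Your strategy is sound and genuinely different from the paper's at the decisive point. The paper does not add $q_0,\dots,q_N$ as explicit hypotheses to the sequent; instead it \emph{boxes} the implication, first obtaining $T_m + (\Delta^t)^*(w_{\Delta}) \vdash \bigwedge_{i=0}^{m}q_i^* \rightarrow (A^t)^{*}(w_A)$ for a finite $\Delta\subseteq\Gamma$ and a large $m$, so that by $\Sigma_1$-completeness the sentence $((\Box_m(\bigwedge\Delta\rightarrow A))^t)^*$ is provable in $I\Sigma_1$ and hence true in \emph{all} models of $\bigcup_n T_n$. The first (uniform, fixed-hierarchy) part of Theorem \ref{t1-7}$(iii)$ then yields $\mathbf{S4}(Q)\vdash(\Box_m(\bigwedge\Delta\rightarrow A))^t$; since this is a pure $t$-translate (the $q_i$'s have been absorbed into the outer box), Theorem \ref{t2-11} applies off the shelf, and $\mathbf{T}_h$ strips $\Box_m$ at the end. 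Your variant keeps the $q_i$'s outside the boxes, uses the class-completeness (``Moreover'') part of Theorem \ref{t1-7}$(iii)$, and re-runs the proof of Theorem \ref{t2-11}$(ii)$ on $\bigwedge\Delta^t\wedge\bigwedge_{i\le N}q_i\rightarrow A^t$. That re-run does work: the $q_i$ lie in $X$ as atoms, Claims 1--3 there are stated for arbitrary (good) $X$-proofs rather than for $\Rightarrow A^t$ specifically, and $s$ sends each $q_i$ to $\top$, so the extra hypotheses evaporate. The cost is that you must re-verify that machinery, which the paper's boxing trick avoids entirely. Your opening counterexample showing that $t$ is not faithful for $\mathbf{S4}$ is correct and is exactly why some such device is needed.

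There is, however, one concrete gap in your semantic step. You assume only $M\vDash\bigwedge_{i\le N}q_i^{\sigma}$ and then assert that $(M,\{T_n+\bigwedge_{i\le n}q_i^{\sigma}\}_{n})$ is a reflexive provability model. For $n>N$ this can fail: soundness of $T_n^{\ast}$ in $M$'s view amounts to $M\vDash \Pr_{T_n}(\bigwedge_{i\le n}q_i^{\sigma}\rightarrow\phi)\rightarrow\phi$, and reflection for $T_n$ only returns $\bigwedge_{i\le n}q_i^{\sigma}\rightarrow\phi$, so you need $M\vDash q_i^{\sigma}$ for \emph{all} $i\le n$ --- but $\sigma$ may send $q_{N+1}$ to a sentence false in $M$. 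Since the hypothesis $\mathbf{Ref}\vDash\Gamma\Rightarrow A$ can only be applied to a model actually lying in $\mathbf{Ref}$, the argument as written does not go through. The repair is easy: set $T_n^{\ast}=T_n+\bigwedge_{i\le\min(n,N)}q_i^{\sigma}$, which changes nothing for the boxes occurring in $\Gamma$ and $A$ (all of index at most $N$) and is reflexive at every level under your standing assumption. (The paper sidesteps the same issue by restricting to models $M\vDash\bigcup_n T_n+\{q_i^*\}_{i=0}^{\infty}$ and extracting the finite $m$ by compactness afterwards.) With that repair, your proof is correct.
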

\begin{proof}
Let $ \{T_n\}_{n=0}^{\infty}$ be a uniformly reflexive hierarchy of sound theories. And assume that $*$ is a uniform substitution of the Theorem \ref{t1-7}. Remind from the proof of the Theorem \ref{t3-2} that we have a canonical witness $w_C$ for $C^t$ when $C \in \mathcal{L}_{\infty}$. It was defined as follows: Pick $n$ as a witness for some occurrence of a box in $C^t$, if that occurrence is the translation of $\Box_n$ in $C$. First of all, we want to show that there exists a finite $\Delta \subseteq \Gamma$ and $m$ greater than all the numbers in $w_{\Delta}$ and $w_A$ such that 
\[
T_m + (\Delta^t)^*(w_{\Delta}) \vdash \bigwedge_{i=0}^{m}q_i^* \rightarrow (A^t)^{*}(w_A).
\]
Pick a model $M \vDash \bigcup_{n=0}^{\infty} T_n +\{q_i^*\}_{i=0}^{\infty}$. Then $(M, \{T_n+ \bigwedge_{i=0}^{n} q_i^*\}_{n=0}^{\infty})$ is a reflexive provability model. Therefore, $M \vDash \Gamma^* \Rightarrow A^*$. It is easy to check that for any formula $B$, $B^*$ with respect to the provability model $(M, \{T_n+ \bigwedge_{i=0}^{n} q_i^*\}_{n=0}^{\infty})$ is equivalent to $(B^t)^*(w_B)$ with respect to the provability model
$(M, \{T_n\}_{n=0}^{\infty})$. Therefore, $(M, \{T_n\}_{n=0}^{\infty}) \vDash (\Gamma^t)^*(w_{\Gamma}) \Rightarrow (A^t)^{*}(w_A)$. Since it is true for all $M \vDash \bigcup_{n=0}^{\infty} T_n +\{q_i^*\}_{i=0}^{\infty}$, we have
\[
\bigcup_{n=0}^{\infty} T_n +\{q_i^*\}_{i=0}^{\infty} + (\Gamma^t)^*(w_{\Gamma}) \vdash (A^t)^{*}(w_A).
\]
Therefore there exists some $m$ and some finite $\Delta \subseteq \Gamma$ such that
\[
T_m + (\Delta^t)^*(w_{\Delta}) \vdash \bigwedge_{i=0}^{m}q_i^* \rightarrow (A^t)^{*}(w_A).
\] 
Notice that w.l.o.g we can choose $m$ big enough to reach the condition that $m$ should be greater that all the numbers in $w_{\Delta}$ and $w_A$. Hence,
\[
I\Sigma_1 \vdash \Pr_m(\bigwedge_{i=0}^{m}q_i^* \rightarrow (\bigwedge (\Delta^t)^*(w_{\Delta}) \rightarrow (A^t)^{*}(w_A))).
\]
But 
\[
\Pr_m(\bigwedge_{i=0}^{m}q_i^* \rightarrow (\bigwedge (\Delta^t)^*(w_{\Delta}) \rightarrow (A^t)^{*}(w_A)))
\]
is $((\Box_m(\bigwedge \Delta \rightarrow A))^t)^*(w)$, where $w=w_{(\Box_m(\bigwedge \Delta \rightarrow A))^t}$.
Therefore,
$
((\Box_m(\bigwedge \Delta \rightarrow A))^t)^*(w)
$
is true in all the models of the form $(M, \{T_n\}_{n=0}^{\infty})$ in which $M \vDash \bigcup_{n=0}^{\infty} T_n$. By strong uniform completeness for $\mathbf{S4}(Q)$, Theorem \ref{t1-7}, we have $\mathbf{S4}(Q) \vdash (\Box_m(\bigwedge \Delta \rightarrow A))^t$. Then by Theorem \ref{t2-11}, we have $\mathbf{S4}_h \vdash \Box_m( \bigwedge \Delta \rightarrow A)$. Hence $\mathbf{S4}_h \vdash \bigwedge \Delta \rightarrow A$ which means that $\Gamma \vdash_{\mathbf{S4}_h} A$.
\end{proof}
\section{The Logic $\mathbf{GL}_h$}
In this section we will show that the usual provability logic approach to investigate the provability-based behavior of theories (instead of hierarchies) is a special case of this new framework. It is enough to limit ourselves to the constant hierarchies.
\begin{thm}\label{t6-1}(Soundness)
If $\Gamma \vdash_{\mathbf{GL}_h} A$, then $\mathbf{Cst} \vDash \Gamma \Rightarrow A$.
\end{thm}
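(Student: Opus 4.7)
The plan is to prove soundness by straightforward induction on the length of a $\mathbf{GL}_h$-proof, following the pattern of Theorems 3.1, 4.1, and 5.1. The axioms $\mathbf{H}, \mathbf{K}_h, \mathbf{4}_h$ and the rules (modus ponens, necessitation) are handled exactly as in the proof of Theorem 3.1: each such theorem $A$ gives an arithmetical sentence $A^\sigma$ that is $I\Sigma_1$-provable (modus ponens is trivial, necessitation uses formalized $\Sigma_1$-completeness together with $I\Sigma_1 \subseteq T_n$), so in particular $M \vDash A^\sigma$ for any provability model. Hence only the new axiom $\mathbf{L}_h$ needs a genuinely new argument, and it is where the constancy hypothesis on the model enters.

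For the case $A = \Box_{n+1}(\Box_n B \to B) \to \Box_n B$, I would reason as follows. Its interpretation is
\[
A^\sigma \;=\; \Pr_{T_{n+1}}\!\bigl(\Pr_{T_n}(B^\sigma) \to B^\sigma\bigr) \;\to\; \Pr_{T_n}(B^\sigma).
\]
By the first constancy clause, $M \vDash \Pr_{T_{n+1}}(C) \leftrightarrow \Pr_{T_n}(C)$ for every sentence $C$; instantiate with $C = \Pr_{T_n}(B^\sigma) \to B^\sigma$. The antecedent of $A^\sigma$ therefore becomes equivalent in $M$ to $\Pr_{T_n}(\Pr_{T_n}(B^\sigma) \to B^\sigma)$, reducing $A^\sigma$ to an instance of formalized L\"ob for $T_n$. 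Since formalized L\"ob is a theorem of $I\Sigma_1$ for any $\Sigma_1$ extension, and $M \vDash I\Sigma_1$, we get $M \vDash A^\sigma$. To make this fit into the induction framework used in Theorems 4.1 and 5.1, I would strengthen the inductive hypothesis to: \emph{for every $k > r(A)$, $M$ thinks $T_k \vdash A^\sigma$}. The second constancy clause $M \vDash \Pr_{T_0}(\Pr_{T_m}(C) \leftrightarrow \Pr_{T_n}(C))$ is exactly what is needed to verify the strengthened clause for $\mathbf{L}_h$: it supplies, inside $M$, a $T_k$-proof (for $k$ large enough) of the equivalence, which combined with the $T_k$-provability of formalized L\"ob yields $M \vDash \Pr_{T_k}(A^\sigma)$.

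Once the claim is established for all axioms, soundness in the statement's form follows mechanically: if $\Gamma \vdash_{\mathbf{GL}_h} A$ then some finite $\Delta \subseteq \Gamma$ satisfies $\mathbf{GL}_h \vdash \bigwedge \Delta \to A$, so $M \vDash \bigwedge \Delta^\sigma \to A^\sigma$ for every constant model and every $\sigma$. The main obstacle is the $\mathbf{L}_h$ case: one must not confuse the two constancy clauses (the unstarred one justifies truth in $M$, the starred one justifies provability in $T_k$ inside $M$), and one must keep track of which indices $k$ are admissible for the strengthened hypothesis. An alternative, slightly more conceptual route is to apply Theorem 6.3(i) to reduce to $\mathbf{GL} \vdash (\bigwedge \Delta)^f \to A^f$, then observe (by induction on formulas, using the constancy conditions to collapse all $\Pr_{T_n}$'s in $M$ to $\Pr_{T_0}$) that the hierarchical interpretation in a constant model agrees with the usual Solovay-style interpretation over $T_0$, and finally invoke standard soundness of $\mathbf{GL}$; but this buries the genuine content (the use of formalized L\"ob together with constancy) inside that final invocation, so I would prefer the direct induction.
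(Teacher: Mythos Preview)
Your proposal is correct and follows essentially the same route as the paper: induction on the $\mathbf{GL}_h$-derivation, with all axioms except $\mathbf{L}_h$ handled via $I\Sigma_1$-provability as in Theorem~\ref{t3-1}, and $\mathbf{L}_h$ handled by combining formalized L\"ob for $T_n$ with the constancy clauses to swap $\Pr_{T_{n+1}}$ for $\Pr_{T_n}$. The only cosmetic difference is that the paper's strengthened inductive hypothesis is ``$M \vDash A^\sigma$ and $M$ thinks $T_0 \vdash A^\sigma$'' (fixing the base theory $T_0$ rather than quantifying over $k > r(A)$ as you do), which is equivalent in a constant model and slightly streamlines the necessitation step.
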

\begin{proof}
By definition it is enough to show that if $\mathbf{GL}_h \vdash A$ then $\mathbf{Cst} \vDash A$. To do that we prove the following claim:\\

\textbf{Claim.} Let $(M, \{T_n\}_{n=0}^{\infty})$ be a constant provability model and $\mathbf{GL}_h \vdash A$. Then for any arithmetical substitution $\sigma$, $M \vDash A^{\sigma}$ and also $M$ thinks that $T_0 \vdash A^{\sigma}$.\\

The proof of the claim is by induction on the length of the proof of $A$. First of all the axioms. Since the substituted version of all axioms of $\mathbf{GL}_h$ except $\mathbf{L}_h$ are provable in $I\Sigma_1$ (see the proof of Theorem \ref{t3-1}), it is easy to see that the claim holds. If $A$ is an instance of the L\"{o}b axiom, then $A=\Box_{n+1}(\Box_n B \rightarrow B)\rightarrow \Box_n B$. By the formalized L\"{o}b's theorem for $T_n$, we have
\[
I\Sigma_1 \vdash \Pr_{n}(\Pr_n (B^{\sigma}) \rightarrow B^{\sigma})\rightarrow \Pr_n (B^{\sigma}) .
\] 
Therefore it is true in $M$, and since $M$ thinks that $T_n=T_{n+1}$, we have
\[
M \vDash \Pr_{n+1}(\Pr_n (B^{\sigma}) \rightarrow B^{\sigma})\rightarrow \Pr_n (B^{\sigma}).
\]
On the other hand, we know that the following is formalizable in $I\Sigma_1$:
If
\[
T_0 \vdash \Pr_{n}(\Pr_n (B^{\sigma}) \rightarrow B^{\sigma})\rightarrow \Pr_n (B^{\sigma}) \;\;\; (*)
\]
and
\[
T_0 \vdash \Pr_{n+1}(\Pr_n (B^{\sigma}) \rightarrow B^{\sigma}) \leftrightarrow \Pr_n(\Pr_n (B^{\sigma}) \rightarrow B^{\sigma})
\]
then 
\[
T_0 \vdash \Pr_{n+1}(\Pr_n (B^{\sigma}) \rightarrow B^{\sigma}) \rightarrow \Pr_n (B^{\sigma}) .
\]
Moreover, the line $(*)$ is a $\Sigma_1$ true statement, hence it is provable in $I\Sigma_1$. Therefore, it is formalizable in $I\Sigma_1$. Hence the following is formalizable in $I\Sigma_1$:
If
\[
T_0 \vdash \Pr_{n+1}(\Pr_n (B^{\sigma}) \rightarrow B^{\sigma}) \leftrightarrow \Pr_n(\Pr_n (B^{\sigma}) \rightarrow B^{\sigma})
\]
then 
\[
T_0 \vdash \Pr_{n+1}(\Pr_n (B^{\sigma}) \rightarrow B^{\sigma}) \rightarrow \Pr_n (B^{\sigma}) .
\]
Therefore, this fact is true in $M \vDash I\Sigma_1$, and since the provablity model is constant, $M$ thinks that 
\[
T_0 \vdash \Pr_{n+1}(\Pr_n (B^{\sigma}) \rightarrow B^{\sigma}) \leftrightarrow \Pr_n(\Pr_n (B^{\sigma}) \rightarrow B^{\sigma}).
\] 
Hence $M$ thinks that
\[
T_0 \vdash \Pr_{n+1}(\Pr_n (B^{\sigma}) \rightarrow B^{\sigma}) \rightarrow \Pr_n (B^{\sigma}). 
\] 
The case of modus ponens is easy to see. For the necessitation, if $\Box_n A$ is proved by $A$, then by IH we know that $M$ thinks that
$T_0 \vdash A^{\sigma} $. Therefore by the formalized $\Sigma_1$-completeness in $I\Sigma_1$, we know that $M$ thinks
$
I\Sigma_1 \vdash \Pr_0(A^{\sigma})
$
and since $n \geq 0$, then $M$ thinks $I\Sigma_1 \vdash \Pr_n(A^{\sigma})$. And since $I\Sigma_1 \subseteq T_0$ provably in $I\Sigma_1$, we know that 
$
T_0 \vdash \Pr_n(A^{\sigma}).
$
To prove the truth of $\Pr_n(A^{\sigma})$ in $M$, by IH,  $M$ thinks that
$
T_0 \vdash A^{\sigma}.
$ 
Therefore,
$
M \vDash \Pr_0(A^{\sigma})
$
and since $n \geq 0$, we have 
$M \vDash \Pr_n(A^{\sigma})$.\\

It is clear that the theorem is a consequence of the claim.
\end{proof}
\begin{thm}\label{t6-4}(Strong Completeness)
Let $T$ be a $\Sigma_1$-sound theory. Then if for all models $M \vDash T$ we have $(M, \{T\}_{n=0}^{\infty}) \vDash \Gamma \Rightarrow A$, then $\Gamma \vdash_{\mathbf{GL}_h} A$. Specially, if $\mathbf{Cst} \vDash \Gamma \Rightarrow A$ then $\Gamma \vdash_{\mathbf{GL}_h} A$.
\end{thm}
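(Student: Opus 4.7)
The approach mirrors the reduction strategy already used for $\mathbf{K4}_h$ (Theorem \ref{t3-2}) and $\mathbf{S4}_h$ (Theorem \ref{t5-2}): translate everything into the usual modal language via the forgetful map $f:\mathcal{L}_{\infty}\to\mathcal{L}_{\Box}$, invoke the completeness theorem for the corresponding usual logic $\mathbf{GL}$ from Theorem \ref{t1-7}(vi), and then transfer the result back along Theorem \ref{t6-3}(ii). The ``especially'' clause will fall out of the strong version by taking $T$ to be any $\Sigma_1$-sound extension of $I\Sigma_1$ (say $\PA$): every $(M,\{T\}_{n=0}^{\infty})$ arising from such a $T$ is a constant provability model, so $\mathbf{Cst}\vDash\Gamma\Rightarrow A$ immediately triggers the hypothesis of the first half of the statement.

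Concretely, I would begin by picking, for each $B\in\mathcal{L}_{\infty}$, the canonical witness $w_B$ for $B^f\in\mathcal{L}_{\Box}$ that replays the indices forgotten by $f$, as in the paragraph preceding Lemma \ref{t6-2}, so that $B^f(w_B)=B$. Then I would apply Theorem \ref{t1-7}(vi) to the constant hierarchy $T_n:=T$, yielding a Solovay-style substitution $*$.

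The main point---and the principal technical obstacle---is to verify the hypothesis of Theorem \ref{t1-7}(vi) for the modal sequent $\Gamma^f\Rightarrow A^f$. The difficulty lies in reconciling the two satisfaction relations of the paper: Definition \ref{t1-4} interprets each box via a witness (since $\mathcal{L}_{\Box}$ has only one modality), while Definition \ref{t2-3} interprets $\Box_n$ directly as $\Pr_{T_n}$. Because the hierarchy is constant, every $\Pr_{T_n}$ is literally $\Pr_T$, and the witness $w_B$---chosen to replay the original indices of $B$---yields the same interpretation as $B$ itself. Hence for every arithmetical $\sigma$ and every $M\vDash T$, $(B^f)^{\sigma}(w_B)$ and $B^{\sigma}$ are the same arithmetical sentence. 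Applied to $\sigma=*$, our assumption delivers uniform witnesses $w_{\Gamma},w_{A}$ fulfilling the hypothesis of Theorem \ref{t1-7}(vi), so $\Gamma^f\vdash_{\mathbf{GL}} A^f$.

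Finally, $\Gamma^f\vdash_{\mathbf{GL}} A^f$ uses only a finite $\Delta\subseteq\Gamma$, and Theorem \ref{t6-3}(ii) applied with the witnesses $w_{\Delta}$ and $w_{A}$ gives $\Delta^f(w_{\Delta})\vdash_{\mathbf{GL}_h} A^f(w_{A})$, i.e.\ $\Delta\vdash_{\mathbf{GL}_h} A$ and therefore $\Gamma\vdash_{\mathbf{GL}_h} A$. Everything but the witness-matching step in paragraph three is a direct appeal to previously established machinery; that step is where all the content sits, and it hinges entirely on the fact that in a constant hierarchy the index attached to a box is semantically immaterial.
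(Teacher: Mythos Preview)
Your proposal is correct and follows essentially the same route as the paper: forget the indices via $f$, use the witnesses $w_B$ with $B^f(w_B)=B$, observe that in a constant hierarchy $(B^f)^{\sigma}(w_B)$ and $B^{\sigma}$ coincide, invoke Theorem~\ref{t1-7}(vi) to obtain $\Gamma^f\vdash_{\mathbf{GL}} A^f$, and then pull back through Theorem~\ref{t6-3}(ii). The only cosmetic difference is that you single out the Solovay substitution~$*$ explicitly, whereas the paper simply appeals to Theorem~\ref{t1-7} without unpacking it; this is harmless.
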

\begin{proof}
Let us remind the forgetful translation from the third section. The forgetful translation $f: \mathcal{L}_{\infty} \to \mathcal{L}_{\Box}$ translates atomic formulas and propositional connectives to themselves and sends $\Box_n$ to $\Box$. Moreover, we know that there exists some witness $w_B$ for $B^f$ such that $B^f(w_B)=B$. Since all theories in the hierarchy of the provability models $(M, \{T\}_{n=0}^{\infty}) \vDash \Gamma \Rightarrow A$ are equal, we can conclude that for all arithmetical substitutions $\sigma$ and all formula $B$, $M \vDash B^{\sigma} \leftrightarrow (B^f)^{\sigma}(w_B)$. Therefore, $M \vDash (\Gamma^f)^{\sigma}(w_{\Gamma}) \Rightarrow (A^f)^{\sigma}(w_A)$. Since $\Gamma^f \cup \{A^f\} \subseteq \mathcal{L}_{\Box}$, by Theorem \ref{t1-7}, we have $\Gamma^f \vdash_{\mathbf{GL}} A^f$. By Theorem \ref{t6-3}, $\Gamma^f(w_{\Gamma}) \vdash_{\mathbf{GL}_h} A^f(w_A)$. Hence, $\Gamma \vdash_{\mathbf{GL}_h} A$.
\end{proof}
\section{The Extensions of $\mathbf{KD45}_h$}
As expected, the axiom $\mathbf{5}_h$ in the presence of the axioms $\mathbf{T}_h$ and $\mathbf{4}_h$ are too strong to have a provability interpretation. It informally means that the theory $T_n$ should be decidable in $T_{n+1}$ which implies the decidability of $T_n$. (See \cite{AK}). In this section we will show a more strong version which states that there is no provability interpretation for the extensions of $\mathbf{KD45}_h$.
\begin{thm}\label{t7-1}
There is no provability model $(M, \{T_n\}_{n=0}^{\infty})$ such that 
\[
(M, \{T_n\}_{n=0}^{\infty}) \vDash \mathbf{KD45}_h.
\]
Hence, there are no provability models for any extension of the logic $\mathbf{KD45}_h$. Specially, $\mathbf{S5}_h$ does not have any provability interpretation.
\end{thm}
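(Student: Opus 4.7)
The plan is to assume for contradiction that $(M,\{T_n\}_{n=0}^{\infty})\vDash \mathbf{KD45}_h$ and derive an inconsistency by showing that the axiom $\mathbf{5}_h$, combined with the $\Sigma_1$-completeness packaged into $\mathbf{4}_h$, forces the $T_n$-provability relation to be decidable, contradicting Church's theorem on essential undecidability of $T_n\supseteq I\Sigma_1\supseteq Q$.

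First I would derive in $\mathbf{KD45}_h$ the two key equivalences
\[
\Box_n A \leftrightarrow \Box_{n+1}\Box_n A
\qquad\text{and}\qquad
\neg\Box_n A \leftrightarrow \Box_{n+1}\neg\Box_n A.
\]
The forward halves are exactly $\mathbf{4}_h$ and $\mathbf{5}_h$. For the reverse halves one uses the consistency provided by $\mathbf{D}_h$ at level $n+1$: if both $\Box_{n+1}\Box_n A$ and $\neg\Box_n A$ held, then $\mathbf{5}_h$ would yield $\Box_{n+1}\neg\Box_n A$, and closure of $\Box_{n+1}$ under $\mathbf{K}_h$ would give $\Box_{n+1}\bot$, contradicting $\mathbf{D}_h$. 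Interpreted in $M$, these equivalences say that $T_{n+1}$ faithfully decides $T_n$-provability for every arithmetic sentence.

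Next I would note that every $T_n$ is consistent in $\mathbb{N}$, since $T_n\vdash\bot$ would make $\Pr_n(\bot)$ a true $\Sigma_1$ sentence, hence true in $M$ by $\Sigma_1$-absoluteness, violating $\mathbf{D}_h$. I would then invoke the diagonal lemma in $I\Sigma_1$ to produce a G\"odel sentence $G$ for $T_n$, $I\Sigma_1\vdash G\leftrightarrow \neg\Pr_n(G)$, and do the usual case analysis in $M$. If $M\vDash \Pr_n(G)$, then $\mathbf{H}$ gives $M\vDash \Pr_{n+1}(G)$, while $\mathbf{4}_h$ together with the formalized G\"odel equivalence (which lives in $I\Sigma_1\subseteq T_{n+1}$) yields $M\vDash \Pr_{n+1}(\neg G)$; hence $M\vDash \Pr_{n+1}(\bot)$, contradicting $\mathbf{D}_h$. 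This disposes of the ``positive'' case cleanly.

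The main obstacle is the ``negative'' case $M\vDash\neg\Pr_n(G)$: here $\mathbf{5}_h$ and the formalized equivalence only give $M\vDash \Pr_{n+1}(G)$, which is by itself internally consistent with $T_{n+1}$ being a strict extension of $T_n$ that proves~$G$. To close this case, I plan to combine the $M$-internal decidability of $T_n$-provability forced by Step~1 with the external r.e.\ nature and real-world consistency of the hierarchy: concretely, the instance of $\mathbf{5}_h$ applied externally to every standard $\phi$ with $T_n\nvdash\phi$ should be parlayed into a standard $T_{n+1}$-proof of $\neg\Pr_n(\phi)$, making $\{\phi:T_n\nvdash\phi\}$ recursively enumerable and contradicting Church's theorem. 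The delicate point—and in my view the hardest step—is bridging the gap between $M$-internal provability witnesses (possibly nonstandard) and the standard proof witnesses required in reality; this is likely to require either a careful $\Sigma_1$/$\Pi_1$-absoluteness argument, or an iterated G\"odel diagonalization across several hierarchy levels forcing the contradiction $\Pr_m(\bot)$ at some finite level~$m$. Finally, since any provability model of a supertheory of $\mathbf{KD45}_h$ is a fortiori a $\mathbf{KD45}_h$-model, no extension—including $\mathbf{S5}_h$—admits any provability interpretation.
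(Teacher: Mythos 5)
Your proposal has a genuine gap, and you have correctly identified where it is: the ``negative'' case $M \vDash \neg\Pr_n(G)$ is never closed. The route you sketch for it --- turning instances of $\mathbf{5}_h$ into standard $T_{n+1}$-proofs of $\neg\Pr_n(\phi)$ and invoking Church's theorem --- runs into exactly the obstruction you name, and that obstruction is fatal rather than merely delicate. A provability model only requires $M \vDash I\Sigma_1$; $M$ may be nonstandard and need not even satisfy $\bigcup_n T_n$, so $M \vDash \Pr_{n+1}(\neg\Pr_n(\phi))$ yields only an $M$-internal (possibly nonstandard) proof code, from which no standard $T_{n+1}$-proof, and hence no recursive enumeration of $\{\phi : T_n \nvdash \phi\}$, can be extracted. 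Since the theorem quantifies over \emph{all} provability models, any argument that leans on real-world recursion theory or on the soundness of the $T_n$ cannot work in general.

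The paper closes the argument by a completely internal computation that sidesteps both the Gödel sentence and Church's theorem. The key move is the choice of substitution: apply $\mathbf{5}_h$ to $p^{\sigma} = \Pr_{n+1}(\bot)$, giving
\[
M \vDash \neg \Pr_n(\Pr_{n+1}(\bot)) \rightarrow \Pr_{n+1}(\neg \Pr_n(\Pr_{n+1}(\bot))).
\]
Formalized $\Sigma_1$-completeness (provable in $I\Sigma_1$, hence true in $M$ and provable in $T_{n+1}$ inside $M$) upgrades the consequent to $\Pr_{n+1}(\neg\Pr_{n+1}(\bot))$, and the formalized second incompleteness theorem turns that into $\Pr_{n+1}(\bot)$; with $\mathbf{D}_h$ this forces $M \vDash \Pr_n(\Pr_{n+1}(\bot))$, hence $M \vDash \Pr_{n+2}(\Pr_{n+1}(\bot))$, which together with the boxed instance $\Box_{n+2}\neg\Box_{n+1}\bot$ of $\mathbf{D}_h$ yields $M \vDash \Pr_{n+2}(\bot)$ --- a contradiction. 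Every step is an implication provable in $I\Sigma_1$ and therefore true in the arbitrary model $M$, which is precisely what your external argument cannot guarantee. Your first two steps (the decidability-style equivalences and the positive case of the Gödel sentence) are correct but end up unnecessary; the missing idea is this specific self-referential substitution.
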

\begin{proof}
Assume that $(M, \{T_n\}_{n=0}^{\infty}) \vDash \mathbf{KD45}_h$. Then we know that for any arithmetical substitution $\sigma$ we have
\[
M \vDash \neg \Pr_n (p^{\sigma}) \rightarrow \Pr_{n+1} (\neg \Pr_n (p^\sigma)).
\]
Pick an arithmetical substitution which send $p$ to $\Pr_{n+1}(\bot)$. Therefore, we have
\[
M \vDash \neg \Pr_n (\Pr_{n+1}(\bot)) \rightarrow \Pr_{n+1} (\neg \Pr_n (\Pr_{n+1}(\bot))). \;\;\; (*)
\]
On other hand by the formalized $\Sigma_1$-completeness, we have 
\[
I\Sigma_1 \vdash \neg \Pr_n(\Pr_{n+1}(\bot)) \rightarrow \neg \Pr_{n+1}(\bot).
\]
Hence,
\[
T_{n+1} \vdash \neg \Pr_n(\Pr_{n+1}(\bot)) \rightarrow \neg \Pr_{n+1}(\bot).
\]
Moreover, by $\Sigma_1$-completeness, we have 
\[
I\Sigma_1 \vdash \Pr_{n+1}(\neg \Pr_n(\Pr_{n+1}(\bot)) \rightarrow \neg \Pr_{n+1}(\bot)).
\]
Therefore,
\[
I\Sigma_1 \vdash \Pr_{n+1}(\neg \Pr_n(\Pr_{n+1}(\bot))) \rightarrow \Pr_{n+1}(\neg \Pr_{n+1}(\bot)).
\]
And since $M \vDash I\Sigma_1$, we have
\[
M \vDash \Pr_{n+1}(\neg \Pr_n(\Pr_{n+1}(\bot))) \rightarrow \Pr_{n+1}(\neg \Pr_{n+1}(\bot)).
\]
Therefore by $(*)$ we have
\[
M \vDash \neg \Pr_n(\Pr_{n+1}(\bot)) \rightarrow \Pr_{n+1}(\neg \Pr_{n+1}(\bot)).
\]
Based on G\"{o}del's second incompleteness theorem formalized in $I\Sigma_1$, we can conclude
\[
I\Sigma_1 \vdash \neg \Pr_{n+1}(\bot) \rightarrow \neg \Pr_{n+1}(\neg \Pr_{n+1}(\bot)).
\]
Therefore, 
\[
M \vDash \neg \Pr_n(\Pr_{n+1}(\bot)) \rightarrow \Pr_{n+1}(\bot).
\]
But we know that the provability model $(M, \{T_n\}_{n=0}^{\infty}) $ is a model for $\mathbf{D}_h$, therefore $M \vDash \neg \Pr_{n+1} (\bot)$. Hence,
$
M \vDash \Pr_n(\Pr_{n+1}(\bot))
$.
Since the theories are provably increasing, we have
$
M \vDash \Pr_{n+2}(\Pr_{n+1}(\bot))
$.
Again, since the provability model is a model for the logic $\mathbf{KD4}_h$, therefore, it is a model of the formula $\Box_{n+2} \neg \Box_{n+1} \bot$. Hence,
$
M \vDash \Pr_{n+2}(\neg \Pr_{n+1}(\bot))
$.
Therefore,
$
M \vDash \Pr_{n+2}(\bot)
$,
which contradicts with an instance of the axiom $\mathbf{D}_h$.
\end{proof}
\vspace{4pt}
\textbf{Acknowledgment.} We wish to thank Pavel Pudl\'{a}k for all the helpful discussions, specifically pointing out the importance of using the poly-modal language. Also, we are grateful to Raheleh Jalali and Masoud Memarzadeh for their careful reading of the earlier draft and their useful comments.


\begin{thebibliography}{99} 
\addcontentsline{toc}{section}{References} 
\bibitem{AK}
A. Akbar tabatabai, Provability Interpretation of Propositional and Modal Logics, Preprint, 2016.
\bibitem{G}
K. G\"{o}del, Eine Interpretation des Intuitionistichen Aussagenkalk\"{u}ls, Ergebnisse Math Colloq. Vol. 4 (1933), pp. 39-40.
\bibitem{Po}
F. Poggiolesi, Gentzen Calculi for Modal Propositional Logic, Springer, 2010.
\bibitem{So}
R. Solovay, Provability interpretations of modal logic, Israel Journal of Mathematics,
vol. 25 (1976), pp. 287-304.
\end{thebibliography}
\end{document}